\documentclass[a4paper,11pt,reqno]{article}
\usepackage{geometry}
\geometry{a4paper, left=31mm, right=31mm, marginparwidth=30mm, headheight=14pt}
%

\usepackage[normalem]{ulem}

\usepackage{amsthm}



\usepackage{amsmath,amstext,amscd,amssymb,euscript,mathrsfs}
\usepackage{calrsfs}
\usepackage{epsfig}
\usepackage{mathtools}
\usepackage[colorlinks=false]{hyperref} 
\usepackage[english]{babel}
\usepackage{verbatim} 
\usepackage{enumerate}
\usepackage{bbm}
\usepackage{color}
\usepackage{wrapfig} 

\newcommand{\SortNoop}[1]{}

\usepackage{enumitem}
\setenumerate{label={\normalfont(\alph*)},topsep=4pt,itemsep=0pt} 

\theoremstyle{plain}
\newtheorem{theorem}{Theorem}[section]
\newtheorem{lemma}[theorem]{Lemma}
\newtheorem{proposition}[theorem]{Proposition}
\newtheorem{corollary}[theorem]{Corollary}

\theoremstyle{definition}
\newtheorem{definition}[theorem]{Definition}

\newtheorem{remark}[theorem]{Remark}



\newtheorem{obs}[theorem]{\!\!}

\newcommand{\e}{{\rm e}}

\usepackage{times}
\usepackage{stmaryrd}





\newcommand{\1}{\mathbbm{1}}


%


\newcommand{\dd}{\, \mathrm{d}}



\newcommand{\E}{\mathbb E}

\newcommand{\R}{\mathbb R}
\newcommand{\N}{\mathbb N}






\newcommand{\cC}{\mathcal{C}}

\newcommand{\cF}{\mathcal{F}}

\newcommand{\cI}{\mathcal{I}}

\newcommand{\cL}{\mathcal{L}}

\newcommand{\cS}{\mathcal{S}}

\newcommand{\fX}{\mathfrak{X}}

\renewcommand{\P}{\mathbb{P}}

\newcommand{\supp}{\operatorname{supp}} 



\usepackage{mathtools} 

\newcommand{\rD}{\mathrm{D}}

\newcommand{\para}{\varolessthan}
\newcommand{\arap}{\varogreaterthan}
\newcommand{\reso}{\varodot}

\newcommand{\I}{\mathbb{I}}

\renewcommand{\epsilon}{\varepsilon}

\usepackage[disable]{todonotes} 

\usepackage[normalem]{ulem} 

\newenvironment{colorwil}
    {\color[rgb]{0,0.5,0}    }

\newcommand{\bw}{\bgroup\color[rgb]{0,0.5,0}}
\newcommand{\ew}{\egroup}

\newcommand{\bwil}{\begin{colorwil}}
\newcommand{\ewil}{\end{colorwil}}

\newcommand{\tonote}{\todo[color=gray!50]}

\newenvironment{calc}    {\color{orange}     }    {     }

\newcommand{\cnewline}{\\}
\newcommand{\cand}{&}

\usepackage{environ}



\usepackage{xcolor}

\newcommand{\br}{\bgroup\color{red}}
\newcommand{\er}{\egroup}

\newcommand{\bb}{\bgroup\color{blue}}
\newcommand{\eb}{\egroup}


\RenewEnviron{calc}{}  
\renewcommand{\cnewline}{}  
\renewcommand{\cand}{}  
\RenewEnviron{hide}{}  


\usepackage{tocloft}
\setlength{\cftbeforesecskip}{1pt} 



\begin{document}

\renewcommand{\thefootnote}{\Roman{footnote}}

\title{Quantitative heat-kernel estimates for diffusions with distributional drift}

\author{
\renewcommand{\thefootnote}{\Roman{footnote}}
Nicolas Perkowski
\footnotemark[1]
\\
\renewcommand{\thefootnote}{\Roman{footnote}}
Willem van Zuijlen
\footnotemark[2]
}

\footnotetext[1]{Free University of Berlin, Arnimallee 7, 14195 Berlin, Germany {\tt perkowski@math.fu-berlin.de}
}
\footnotetext[2]{Weierstrass Institute for Applied Analysis and Stochastics, Mohrenstra{\ss}e 39, 10117 Berlin, Germany, {\tt vanzuijlen@wias-berlin.de}}

\date{January 27, 2022}

\maketitle 

\renewcommand{\thefootnote}{\arabic{footnote}} 


\begin{abstract}
We consider the stochastic differential equation on $\R^d$ given by 
\begin{align*}
\dd X_t = b(t,X_t) \dd t + \dd B_t, 
\end{align*}
where $B$ is a Brownian motion and $b$ is considered to be a distribution of regularity $ > -\frac12$. 
We show that the martingale solution of the SDE has a transition kernel $\Gamma_t$ and prove upper and lower heat-kernel estimates for $\Gamma_t$ with explicit dependence on $t$ and the norm of $b$. 

\bigskip

\emph{Keywords and phrases.} heat-kernel estimate, singular diffusion, parametrix method. 

\emph{MSC 2020}. {\em Primary.} 60H10 {\em Secondary.} 35A08.



\end{abstract}



\section{Introduction and main results}\label{sec-Intro}

In this paper we consider the stochastic differential equation on $\R^d$ given by 
\begin{align}
\label{eqn:diffusion_intro_equation}
\dd X_t = b(  t,   X_t) \dd t + \dd B_t, 
\end{align}
where $B$ is a Brownian motion and $b$ is a distribution of regularity $>-\frac12$. Such \emph{singular diffusions} (diffusions with distributional drift) appear as models for stochastic processes in random media (then $b$ would also be random, but independent of $B$), for example in~\cite{Br86, DeDi16, CaCh18}. They also appear as ``stochastic characteristics'' in Feynman-Kac type representations of singular SPDEs, for example in~\cite{GuPe17, CaCh18, KoPevZ}. In non-singular SPDEs, the stochastic characteristics would be formulated in terms of the Brownian motion, and they may be useful tools to infer information about the long-time behavior of the SPDE. For example, the asymptotic behavior of the total mass of the parabolic Anderson model  is typically derived via the Feynman-Kac formula~\cite{Ko16}, and for that purpose it is important that we understand the Brownian motion and its transition probabilities very well. When studying singular variants of the parabolic Anderson model, where the Brownian motion in the Feynman-Kac representation is replaced by a singular diffusion, we thus need to understand the transition probabilities of this singular diffusion. Moreover, since we are interested in the long-time behavior, we need quantitative control of the transition probabilities on arbitrarily long time intervals. This motivates our present work.

We show that the solution to~\eqref{eqn:diffusion_intro_equation} possesses a transition kernel $\Gamma_t : \R^d \times \R^d \rightarrow \R$ for all $t>0$. This means that under the measure $\P_x$ such that $X_0 =x$ we have for all $\phi\in C_{\rm b}(\R^d)$ 
\begin{align*}
\E_x[ \phi(X_t)] = \int_{\R^d} \phi(y) \Gamma_t(x,y)  \dd y. 
\end{align*}

The following theorem represents the main result of our paper, in which we show that the above transition kernel satisfies heat-kernel estimates. 
 
For any Banach space $\fX$ and $t>0$ we write $\|\cdot\|_{C_t \fX}$ for the norm on $C([0,t],\fX)$, which is defined for $f\in C([0,t],\fX)$ by
\begin{align*}
\| f\|_{C_t \fX} = \sup_{s\in [0,t]} \|f(s)\|_{\fX}. 
\end{align*}
$\Delta_{-1}b$ denotes the first Littlewood-Payley block and $\Delta_{\ge 0} b$ the sum of the positive Littlewood-Payley blocks (see Section~\ref{section:notation}). $B_{p,q}^s$ denotes a Besov space, see \cite{BaChDa11}. 

\begin{theorem}
\label{theorem:heat_kernel_bound}
Let  $\alpha \in (0, \frac12 )$ and $c > 1$.  
There exist a  $C > 1$ and a $\kappa \in (0,1)$ such that for all $b   = (b_t)_{t\ge 0}  \in  C([0,\infty), B_{\infty,1}^{-\alpha} (\R^d, \R^d) )   $, $\mu \in \N_0^d$ with $| \mu | \le 1$, and for all $t >0$, $x,y \in \R^d$:
\begin{align}
  \label{eqn:heat_kernel_upper_bound} 
  | \partial^{\mu}_x \Gamma_t (x, y) | 
  &\le C 
  \exp \left( C t  \Big[  \| \Delta_{-1} b \|_{  C_t   L^\infty}^{2}  + \| \Delta_{\ge 0} b \|_{  C_t   B_{\infty, 1}^{- \alpha}}^{\frac{2}{1 - \alpha}}  \Big]  \right) (t^{- \frac{| \mu |}{2}} \vee 1) p (c t, x - y) , \\
  \label{eqn:heat_kernel_lower_bound} 
  \Gamma_t (x, y)  &\ge 
  \frac{1}{C}  \exp \left( - C t  \Big[  \| \Delta_{-1} b \|_{  C_t   L^\infty}^{2}  + \| \Delta_{\ge 0} b \|_{  C_t   B_{\infty, 1}^{- \alpha}}^{\frac{2}{1 - \alpha}}  \Big]  \right)  p ( \kappa t , x - y) ,
\end{align}
where $p(t,x)=(2\pi t)^{-\frac{d}{2}}\e^{-|x|^2/2t}$ is the standard Gaussian kernel. 
\end{theorem}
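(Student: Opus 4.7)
\medskip
\noindent\textbf{Proof proposal.}
The plan is to implement a parametrix-type expansion for $\Gamma_t$, using the standard heat kernel $p(t,\cdot)$ as the frozen-coefficient parametrix, and to treat the distributional drift via Littlewood--Paley/paraproduct decompositions together with Schauder estimates for the heat semigroup. Writing the (dual) Kolmogorov equation formally as $\partial_t \Gamma_t(\cdot,y) = \tfrac12 \Delta_x \Gamma_t(\cdot,y) + b(t,\cdot)\cdot \nabla_x \Gamma_t(\cdot,y)$ with initial datum $\delta_y$, Duhamel's principle gives
\begin{equation*}
\Gamma_t(x,y) = p(t,x-y) + \int_0^t \!\! \int_{\R^d} p(t-s,x-z)\, \bigl(b(s,z)\cdot \nabla_z \Gamma_s(z,y)\bigr) \dd z \dd s,
\end{equation*}
where the product $b\cdot\nabla\Gamma$ must be interpreted via paraproducts (since $b\in B_{\infty,1}^{-\alpha}$). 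Iterating this identity produces a series $\Gamma_t = \sum_{k\ge 0} \Gamma_t^{(k)}$ with $\Gamma_t^{(0)}(x,y)=p(t,x-y)$ and each $\Gamma_t^{(k)}$ carrying $k$ factors of $b$ sandwiched between heat kernels.

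The core of the argument is to bound each $\Gamma_t^{(k)}$ against a Gaussian $p(ct,x-y)$ times a scalar factor that decays fast enough in $k$ for the series to converge. To this end one splits $b=\Delta_{-1}b + \Delta_{\ge 0}b$. The low-frequency piece $\Delta_{-1}b$ is bounded in $L^\infty$, so every insertion contributes only the $t^{-1/2}$ coming from $\nabla p$ times $\|\Delta_{-1}b\|_{C_t L^\infty}$, producing terms of order $(t\|\Delta_{-1}b\|_{C_t L^\infty}^2)^{k/2}/\Gamma(1+k/2)$. For the high-frequency piece, one uses a paraproduct/commutator decomposition $\Delta_{\ge 0}b\cdot \nabla(\cdot) = \Delta_{\ge 0}b\para\nabla(\cdot) + \Delta_{\ge 0}b\arap\nabla(\cdot)+\Delta_{\ge 0}b\reso\nabla(\cdot)$, combined with the standard Schauder estimate
\begin{equation*}
\|p(t-s,\cdot)*(fg)\|_{L^\infty_x}\lesssim (t-s)^{-(1-\alpha)/2}\|f\|_{B^{-\alpha}_{\infty,1}}\|g\|_{B^{1-\alpha}_{\infty,\infty}},
\end{equation*}
keeping the Gaussian decay in $x-y$ by the usual trick of multiplying $p(t,x-z)p(s,z-y)$ through the Chapman--Kolmogorov-type bound $p(t-s,x-z)p(s,z-y)\le C\,p(ct,x-y)\bigl(p(c(t-s),x-z)+p(cs,z-y)\bigr)$. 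Each insertion then contributes a time factor of order $(t-s)^{-(1+\alpha)/2}$, so after integration one accumulates the Beta-function factor $t^{k(1-\alpha)/2}\|\Delta_{\ge 0}b\|^k / \Gamma(1+k(1-\alpha)/2)$. Summing the series via the bound $\sum_k a^k/\Gamma(1+\beta k) \le C\exp(C a^{1/\beta})$ with $\beta=\tfrac12$ and $\beta=\tfrac{1-\alpha}{2}$ produces exactly the two exponents $2$ and $\tfrac{2}{1-\alpha}$ appearing in \eqref{eqn:heat_kernel_upper_bound}. The derivative bound for $|\mu|=1$ is obtained by putting the derivative on the leading parametrix $p(t,x-y)$ (giving the prefactor $t^{-1/2}\vee 1$) and observing that all remaining terms retain their integrability in $s$.

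The lower bound in \eqref{eqn:heat_kernel_lower_bound} is then obtained by a standard chaining argument. First, on sufficiently short time scales $t\le\tau$ with $\tau^{(1-\alpha)/2}\|\Delta_{\ge 0}b\|_{C_t B^{-\alpha}_{\infty,1}}$ and $\tau^{1/2}\|\Delta_{-1}b\|_{C_t L^\infty}$ both small, the parametrix series is dominated by the leading term $p(t,x-y)$, so that $\Gamma_t(x,y)\ge \tfrac12 p(t,x-y)\ge C^{-1}p(\kappa t, x-y)$ holds outright. For general $t>0$, one chooses $N\sim t([\|\Delta_{-1}b\|^2+\|\Delta_{\ge 0}b\|^{2/(1-\alpha)}]+1)$, applies the Chapman--Kolmogorov identity $\Gamma_t(x,y)=\int \Gamma_{t/N}(x,z_1)\cdots \Gamma_{t/N}(z_{N-1},y)\dd z_1\cdots \dd z_{N-1}$, uses the short-time lower bound at each step, and computes the resulting Gaussian convolution to recover $C^{-1}e^{-CN}p(\kappa t,x-y)$, which absorbs into the stated exponential prefactor.

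The main technical obstacle is the distributional nature of $b$: making sense of the product $b\cdot \nabla \Gamma$ and of each term in the iterated expansion requires paraproduct estimates that preserve, uniformly in the iteration depth $k$, both the correct Besov regularities and a Gaussian spatial decay. Keeping the Gaussian tails sharp through the paraproduct/commutator bounds while tracking the precise powers of $t$ and $\|b\|$ in each of the two frequency regimes is the delicate accounting that drives the explicit form of the exponents $2$ and $\tfrac{2}{1-\alpha}$ in the theorem.
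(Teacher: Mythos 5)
Your upper-bound argument is essentially the paper's: the same Duhamel/parametrix series, the same splitting $b=\Delta_{-1}b+\Delta_{\ge 0}b$, per-insertion time factors $(t-s)^{-1/2}$ resp.\ $(t-s)^{-(1+\alpha)/2}$, Beta-function accumulation giving $t^{k/2}/\Gamma(1+\tfrac k2)$ and $t^{k(1-\alpha)/2}/\Gamma(1+\tfrac{k(1-\alpha)}2)$, and the Mittag--Leffler-type summation that produces the exponents $2$ and $\tfrac{2}{1-\alpha}$. Two implementation remarks: the paper does not push the estimate through unweighted Besov/Schauder bounds (which lose the spatial tail after the first iteration); it instead tracks the Gaussian-weighted quantities $\sup_y\|\nabla^i P_{t-s}\Psi^{y,k}_{s,t}/p(ct,\cdot-y)\|_{L^\infty}$ and pairs $b$ against an $L^1$ function via the duality--interpolation bound $|\int a\cdot g|\lesssim \|\Delta_{-1}a\|_{L^\infty}\|g\|_{L^1}+\|\Delta_{\ge 0}a\|_{B^{-\alpha}_{\infty,1}}\|g\|_{L^1}^{1-\alpha}\|\nabla g\|_{L^1}^{\alpha}$, using the exact identity $p(c(t-s),\cdot)*p(cs,\cdot)=p(ct,\cdot)$ rather than a pointwise two-kernel domination; also your displayed ``Schauder estimate'' should carry $(t-s)^{-\alpha/2}$ (or $(t-s)^{-(1+\alpha)/2}$ once the gradient is included), not $(t-s)^{-(1-\alpha)/2}$, although your subsequent bookkeeping uses the correct exponent.

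The genuine gap is in the lower bound. You claim that for $t\le\tau$ small the series is dominated by the leading term, so that $\Gamma_t(x,y)\ge\tfrac12 p(t,x-y)$ holds ``outright'', i.e.\ for all $x,y$. This does not follow: the remainder is controlled by a small constant $\epsilon$ times $p(ct,x-y)$ with $c>1$, and for $|x-y|^2\gtrsim t\log(1/\epsilon)$ one has $\epsilon\, p(ct,x-y)\ge p(t,x-y)$, so off the diagonal the parametrix expansion yields no lower bound at all (the right-hand side can even be negative). What the expansion gives at short times is only the near-diagonal bound $\Gamma_t(x,y)\gtrsim t^{-d/2}$ for $|x-y|\le\sqrt t$ and $t\le a$ with $a\simeq K\big[\|\Delta_{-1}b\|_{C_tL^\infty}^{2}+\|\Delta_{\ge 0}b\|_{C_tB^{-\alpha}_{\infty,1}}^{2/(1-\alpha)}\big]^{-1}$. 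Upgrading this to a full Gaussian lower bound requires chaining that is spatial as well as temporal: the number of Chapman--Kolmogorov steps must grow like $1+|x-y|^2/t$ (intermediate points along the segment from $x$ to $y$, integrated over balls of radius $\sim\sqrt{t/N}$), not merely like $t/a$ as in your choice of $N$; with only the near-diagonal input and your $N$, the Gaussian tail in $x-y$ is not recovered. This is precisely what the Stroock-type Lemma~\ref{lemma:bound_on_kernels_that_satisfy_CK} supplies: its first step converts the on-diagonal short-time bound into $\Gamma_t\ge M^{-1}p(\kappa t,\cdot)$ for all $x,y$ and $t\le a$, and only then does temporal chaining with $\lceil t/a\rceil$ steps give the prefactor $M^{-1-t/a}$, which matches the stated exponential. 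Your argument is repairable by inserting this spatial chaining step, but as written the off-diagonal short-time claim is false.
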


As a corollary, we obtain the following estimate on the escape probability of the diffusion $X$ to leave a ball. 

\begin{corollary}
\label{corollary:escape_out_of_box_probability}
Let $\alpha \in (0,\frac12)$. There exists a $C>0$ such that for all  $b \in   C([0,\infty), B_{\infty,1}^{-\alpha}  (\R^d, \R^d))$, 
 $x\in \R^d$, $K>0$ and $T\ge 1$, and for $X$ solving \eqref{eqn:diffusion_intro_equation} with $\P_x(X_0=x)=1$: 
  \begin{align}
\notag 
&  \P_x  \bigg(\sup_{t\in [0,T]} | X_t - x |\ge K\bigg) \\
  \label{eqn:escape_out_of_box_probability}
&   \le C 
  \exp \Big( C T  \Big[  \| \Delta_{-1} b \|_{  C_T   L^\infty}^{2}  + \| \Delta_{\ge 0} b \|_{  C_T   B_{\infty, 1}^{- \alpha}}^{\frac{2}{1 - \alpha}}  \Big]  \Big) \exp \Big( - \frac{K^2}{C T} \Big)
     \end{align}
\end{corollary}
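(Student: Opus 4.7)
My plan is to reduce the supremum estimate in \eqref{eqn:escape_out_of_box_probability} to a single-time tail bound obtained from Theorem~\ref{theorem:heat_kernel_bound}, and then to lift it by a strong-Markov / maximal-inequality argument. To shorten notation I abbreviate
\begin{align*}
\Lambda_T := T\Bigl[ \| \Delta_{-1} b \|_{C_T L^\infty}^{2} + \| \Delta_{\ge 0} b \|_{C_T B_{\infty,1}^{-\alpha}}^{2/(1-\alpha)} \Bigr],
\end{align*}
so that the desired right-hand side reads $C\exp(C\Lambda_T - K^2/(CT))$.

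The first step is to integrate the upper bound \eqref{eqn:heat_kernel_upper_bound} with $\mu = 0$ over the set $\{|z-y| \ge R\}$ and to use the classical Gaussian tail estimate $\int_{|w| \ge R} p(cs , w) \dd w \le C_d \exp(-R^2/(2 c s))$. Combined with a time shift to cover arbitrary initial times (which replaces the norms of $b$ on $[s_0,T]$ by the larger ones on $[0,T]$), this yields, uniformly in $y \in \R^d$ and $s_0 \le s$ in $[0,T]$,
\begin{align*}
\P_{s_0, y}\bigl(|X_s - y| \ge R \bigr) \le C_0 \exp\bigl( C_0 \Lambda_T - R^2/(C_0 T) \bigr),
\end{align*}
where $\P_{s_0, y}$ denotes the law of the martingale solution started from $y$ at time $s_0$.

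The second step is a dichotomy on $K$. If $K^2 \le C_1 T(\Lambda_T + 1)$ with $C_1$ chosen sufficiently large, then the right-hand side of \eqref{eqn:escape_out_of_box_probability} already majorises $1$ after enlarging $C$, and the bound is trivial. Otherwise, taking $R = K/4$ in the single-time bound gives $\P_{s_0, y}(|X_s - y| \ge K/4) \le 1/2$ uniformly in $(s_0, y, s)$. Setting $\tau := \inf\{t \ge 0 : |X_t - x| \ge K\}$, continuity of paths yields $|X_\tau - x| \ge K$ on $\{\tau \le T\}$, and hence
\begin{align*}
\{\tau \le T\} \cap \{|X_T - X_\tau| \le K/4\} \subseteq \{|X_T - x| \ge 3K/4\}.
\end{align*}
Applying the strong Markov property of the martingale solution at $\tau$, together with the uniform lower bound $\P_{\tau, X_\tau}(|X_T - X_\tau| \le K/4) \ge 1/2$, one obtains $\P_x(\tau \le T) \le 2\, \P_x(|X_T - x| \ge 3K/4)$, and a final application of the single-time bound (with $R = 3K/4$) delivers \eqref{eqn:escape_out_of_box_probability} after relabelling constants.

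The one genuinely non-trivial ingredient, I expect, is the strong Markov step: because $b(t,\cdot)$ is time-dependent, one needs the martingale solutions to form a strong Markov family $(\P_{s,y})_{s \ge 0, y \in \R^d}$ that is compatible with a two-parameter transition kernel $\Gamma_{s,t}$ satisfying the analogue of Theorem~\ref{theorem:heat_kernel_bound} on $[s,t]$. Granted that the parametrix construction behind Theorem~\ref{theorem:heat_kernel_bound} yields a genuine Markov semigroup (as it should, given the uniqueness of the martingale problem underlying the very definition of $\Gamma_t$), this is a standard regular-conditional-distribution argument; everything else in the proof reduces to routine Gaussian bookkeeping.
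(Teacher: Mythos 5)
Your proof is correct, but it takes a genuinely different route from the paper. The paper never uses a stopping-time argument: it controls the whole path via the Garsia--Rodemich--Rumsey inequality, proving first an exponential moment bound for $\sup_{s<t}|X_t-X_s|/\psi(t-s)$ with $\psi(r)=r^{1/2}\sqrt{\log(1/r)\vee 1}$ (Corollary~\ref{corollary:expectation_of_exponential_of_sup}, whose proof feeds the two-time marginals $\Gamma_{|r_2-r_1|}$ into a Gaussian integral), and then deduces \eqref{eqn:escape_out_of_box_probability} by Markov's inequality. You instead combine a single-time Gaussian tail bound (uniform in the starting time via the two-parameter kernel of Remark~\ref{remark:inhomogeneous}), a dichotomy on $K$ versus $T(\Lambda_T+1)$, and an Etemadi/Ottaviani-type maximal inequality through the strong Markov property at the exit time $\tau$. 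Both arguments work: yours is more elementary in that it only needs one-dimensional marginals plus the strong Markov property, which the paper does assert in Theorem~\ref{theorem:martingale_solution} (the identification of the conditional law at $\tau$ with $\P_{\tau,X_\tau}$, i.e.\ measurability of $(s,y)\mapsto\P_{s,y}$, is the regular-conditional-distribution point you flag, and it is indeed standard given well-posedness of the martingale problem); the paper's route avoids any use of the strong Markov property and yields the strictly stronger exponential-moment estimate \eqref{eqn:expectation_of_exponential_of_sup} on the modulus of continuity, which is of independent interest. Two small bookkeeping remarks: the uniform Gaussian tail bound should be stated as $\int_{|w|\ge R}p(cs,w)\dd w\lesssim_d \exp(-R^2/(4cs))$ (the constant-free $\exp(-R^2/(2cs))$ fails by polynomial factors in $d\ge 2$, but this only changes the constant $C$), and in the dichotomy one should note explicitly that enlarging $C$ makes the right-hand side exceed $1$ when $K^2\le C_1T(\Lambda_T+1)$ — both are harmless.
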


\begin{remark}
At least for constant $b$ the heat-kernel estimates are sharp: If $\lambda \in \R^d$ and $b=\lambda$, then $\Gamma_t(x,y) = p(t, y-x - \lambda t)$ and a simple computation shows that $\sup_{x\in \R^d} \frac{p(t,x-\lambda t)}{p(ct,x)} = c^{\frac{d}{2}} e^{\frac{1}{2(c-1)} t \lambda^2}$ and $\inf_{x\in \R^d} \frac{p(t,x-\lambda t)}{p(\kappa t,x)} = \kappa^{\frac{d}{2}} e^{- \frac{1}{2(1-\kappa)} t \lambda^2}$. Since in that case $\Delta_{\ge 0} b = 0$, this corresponds exactly to our bounds \eqref{eqn:heat_kernel_upper_bound} and \eqref{eqn:heat_kernel_lower_bound} (for $\mu=0$). 
\end{remark}
  
\begin{calc}
Indeed, if $X_t$ is the solution of 
\begin{align*}
\dd X_t = \lambda \dd t + \dd B_t,
\end{align*}
then $X_t = X_0 + \lambda t + B_t$ and thus for $\P$ being the probability such that $B_t$ is a standard Brownian motion and $\P_x$ the probability under which $X$ satisfies the SDE with  $X_0 = x$ a.s., 
\begin{align*}
\P_x(X_t \in A) 
= \P ( x+ \lambda t + B_t \in A) 
= \int_{A-x-\lambda t} p(t,y) \dd y 
= \int_{A} p(t,y-x-\lambda t) \dd y . 
\end{align*}
We have 
\begin{align*}
\frac{\Gamma_t(0,y)}{p(ct,y)}
= \frac{p(t,y-\lambda t)}{p(ct,y)}
= c^{\frac{d}{2}} \left[ \exp \left( \frac{|y|^2}{2ct} - \frac{|y-\lambda t|^2}{2t} \right) \right]. 
\end{align*}
We calculate
\begin{align*}
\sup_{y\in \R^d} |y|^2 - c |y -\lambda t|^2 . 
\end{align*}
The function over which we take the derivative is concave, so we calculate the point at which the gradient equals zero. 
The derivative equals $\partial_i |y|^2 - c |y -\lambda t|^2 = 2y_i - 2c  (y_i-\lambda_i t) $. 
This equals zero for $y_i = \frac{c\lambda_i t}{c-1}$ and thus 
\begin{align*}
\sup_{y\in \R^d} |y|^2 - c |y -\lambda t|^2 
= \frac{c-1}{c} |y|^2 
= \frac{c}{c-1} |\lambda|^2 t^2,
\end{align*}
and thus 
\begin{align*}
\sup_{y\in \R^d} \frac{\Gamma_t(0,y)}{p(ct,y)}
= c^{\frac{d}{2}} \exp \left( \frac{|\lambda|^2 t}{2(c-1)}   \right).
\end{align*}
For the lower bound we take instead of the supremum the infimum and replace $c$ by $\kappa$, which makes the function $|y|^2 - \kappa |y-\lambda t|^2$ convex. Then we find 
\begin{align*}
\inf_{y\in \R^d} \frac{\Gamma_t(0,y)}{p(\kappa t,y)}
= \kappa^{\frac{d}{2}} \exp \left( - \frac{|\lambda|^2 t}{2(1-\kappa)}   \right).
\end{align*}
\end{calc}  
  
\begin{remark}
\label{remark:inhomogeneous}
As we consider a time inhomogeneous drift, we could have also formulated the heat-kernel estimates for $\Gamma_{s,t}$ (with $0\le s< t$), which is the transition kernel from time $s$ to time $t$: If $\P_{s,x}$ is the probability measure under which $X_s =x$ and \eqref{eqn:diffusion_intro_equation} holds (for $t>s$), then 
$\E_{s,x}[ \varphi(X_t)] = \int_{\R^d} \varphi(y) \Gamma_{s,t}(x,y)  \dd y$.
However, to simplify notation we only consider the case $s=0$ and we write $\Gamma_t$ for $\Gamma_{0,t}$.
The heat-kernel estimates for $\Gamma_{s,t}$ follow by applying Theorem~\ref{theorem:heat_kernel_bound} with $b'_t = b_{t+s}$, $t \ge 0$.
\end{remark}

\subsection{Literature}
\label{subsection:literature}

 Diffusions with a distributional drift   were first considered by Bass and Chen~\cite{BaCh01} and Flandoli, Russo and Wolf~\cite{FlRuWo03}, both in the one-dimensional time-homogeneous setting. More recently, Delarue and Diel~\cite{DeDi16} used Hairer's rough path approach to singular SPDEs~\cite{Ha11, Ha13} to extend the results of~\cite{FlRuWo03} to the time-inhomogeneous case, and they applied this to construct a random directed polymer measure. Flandoli, Issoglio and Russo~\cite{FlIsRu17} were the first to consider multidimensional singular diffusions, but they require more regularity than in the previous works on the one-dimensional case (they consider the ``Young regime'', i.e., the distributional drift has regularity better than $-1/2$). 
Zhang and Zhao~\cite{ZhZh17} study the ergodicity and they derive heat-kernel estimates for singular diffusions in the Young regime. Cannizzaro and Chouk~\cite{CaCh18} use paracontrolled distributions to extend the approach of~\cite{DeDi16} to higher dimensions and the results of~\cite{FlIsRu17} to more singular drifts. They apply this to construct a random polymer measure that is closely related to the parabolic Anderson model.
 
In this paper we follow the approach of Cannizzaro and Chouk, although we restrict our attention to the more regular Young regime. This is crucial for our arguments.

As already mentioned, Zhang and Zhao~\cite{ZhZh17} also prove heat-kernel estimates for SDEs with distributional drifts in the Young regime. More precisely, they prove that there exist $c,C \ge 1$ such that for all $t\in (0,T]$ and $x, y\in \R^d$
\begin{align*}
\tfrac{1}{C} p(\tfrac{t}{c},x-y)\le  | \Gamma_t (x, y) | \le C p(ct,x-y). 
\end{align*}
Moreover, they give an upper bound on the gradient of the transition kernel, $\nabla \Gamma_t$. 
Here, the constant $C$ implicitly depends on $T$ and $\|b\|_{\cC^{-\alpha}}$.

If $b$ is the gradient of a function that does not dependent on time, then there is classical heat-kernel estimates for $\Gamma$, see for example Stroock~\cite[Theorem~4.3.9]{St08}. In that theorem we have $b = \nabla U$ for a smooth and bounded function $U$, but the estimate only depends on 
$\max U - \min U$, 
so by an approximation argument it extends to continuous and bounded $U$. This result is uniform in time, but also here the dependence of the constants on 
$\max U - \min U$ 
is implicit. 

In another work by the authors together with W. K\"onig \cite{KoPevZ}, our heat-kernel estimates are applied to derive the asymptotic behavior of the total mass of the parabolic Anderson model. In that application it is crucial to understand how the constant grows with $t$ and the norm of $b$. 
Therefore, we need our ``quantitative version'' of the heat-kernel estimates.

\subsection{Notation   and conventions  }
\label{section:notation}

We write $\N= \{1,2,\dots\}, \N_0= \{0\}\cup \N$ and $\N_{-1} = \{-1\} \cup \N_0$. 
  For the whole paper, $d$ is an element of $\N$ and will denote the dimension of the space.  
For families $(a_i)_{i \in \I}, (b_i)_{i\in \I}$ in $\R$ for an index set $\I$, we write $a_i \lesssim b_i$ to denote the existence of a $C>0$ such that $a_i \le C b_i$ for all $i\in \I$. 
We write $C_{\rm b}$ for the space of continuous bounded functions and $C_{\rm b}^\infty$ for the space of $C^\infty$ functions for which all their derivatives are bounded functions. 
We abbreviate function spaces and Besov spaces by omitting ``$(\R^d)$'' in the notation, for example we abbreviate $B_{p,q}^\beta(\R^d)$ to $B_{p,q}^\beta$. Moreover, we write $\cC^\beta$ for $B_{\infty,\infty}^\beta$ and $\cC_p^\beta$ for $B_{p,\infty}^\beta$.
We write $u \para v$  for the paraproduct between $u$ and $v$ (with the low frequencies of $u$ and the high frequencies of $v$), and $u\reso v$ for the resonance product; we adopt the notation from \cite{MaPe19} and refer to \cite{BaChDa11} as background material.
 
  In the rest of the paper   
$(\rho_i)_{i\in \N_{-1}}$ is a \emph{dyadic partition of unity}, meaning that $\rho_{-1}$ is supported in a ball around $0$, $\rho_0$ is supported in an annulus, $\rho_i (x) = \rho_0(2^{-i} x)$ for $i \in \N_0$, $\sum_{i \in \N_{-1}} \rho_i =\1$, $\frac12 \le \sum_{i \in \N_{-1}} \rho_i^2 \le 1$ and $\supp \rho_i \cap \supp \rho_j = \emptyset$ if $|i-j| \ge 2$. 
For $i \in \N_{-1}$ we write $\Delta_i$ for the corresponding Littlewood-Payley blocks ($\cF$ denotes the Fourier transform)
\begin{align*}
\Delta_i f = \rho_i (\rD) f = \cF^{-1}( \rho_i \cF(f)) = \cF^{-1}(\rho_i) *f . 
\end{align*}
Moreover, we define $\Delta_{\ge 0} f $ to be the sum of all the positive Littlewood-Payley blocks: 
\begin{align*}
\Delta_{\ge 0} f = \sum_{i\in \N_0} \Delta_i f. 
\end{align*}

\section{Diffusions with distributional drift and their heat-kernel estimates}
\label{section:heat_kernel_bounds}

\textbf{Throughout this section we fix $T>0$.}
Let $\alpha \in (0,\frac12)$. 
For $b\in  
  C( [0,T] , B_{\infty,1}^{-\alpha}(\R^d,\R^d))  $
 we consider the stochastic differential equation 
\begin{align}
\label{eqn:diffusion_with_drift_b_in_section}
\dd X_t = b(  t,   X_t) \dd t + \dd B_t. 
\end{align}
  For $t >0$ let $\cL_t$   be the operator 
\begin{align}
\label{eqn:def_operator_cL}
 \cL_t  
 = \tfrac12 \Delta +   b_{t}   
  \cdot \nabla. 
\end{align}
We consider the following Cauchy problem for $u\colon   [0,T]   \times \R^d \rightarrow \R$ 
with   terminal   condition $\phi$:
\begin{align}
\label{eqn:cauchy_problem}
\begin{cases}
\partial_t u     + \cL_{t}   
 u = 0 & \qquad \mbox{ on }  [0,T)  \times \R^d, \\
  u(T,\cdot)   = \phi  & \qquad \mbox{ on } \R^d. 
\end{cases}
\end{align}

The solution theory for the Cauchy problem will be given in Proposition~\ref{proposition:solution_cauchy_problem_in_cC}.
We write $u^\phi$ for the solution to \eqref{eqn:cauchy_problem}. 
But let us first discuss how to interpret \eqref{eqn:diffusion_with_drift_b_in_section} in terms of a martingale problem.

\begin{definition}\label{definition:martingale_problem}
We say that a stochastic process $X= (X_t)_{t\in  [0,T] }$ on a probability space $(\Omega,\P)$ is a {\em solution to the SDE \eqref{eqn:diffusion_with_drift_b_in_section} on $[0,T]$    with initial condition $X_0=x$}  if it satisfies the martingale problem for $(  (\cL_t)_{t\in (0,T]}  , \delta_x)$, i.e., if $\P( X_0 = x) =1$ 
and for all $f \in C([0,T],L^\infty(\R^d))$, all $\phi\in C_{\rm c}^\infty(\R^d)$ and for $u=u^\phi$ being the solution to the Cauchy problem \eqref{eqn:cauchy_problem}, 
 the process 
\begin{align*}
\Big(u(  t  ,X_t)   - \int_0^t f(s,X_s) \dd s   \Big)_{t\in [0,T]}
\end{align*}  
is a martingale. 
\end{definition}

The martingale problem has a unique solution:

\begin{theorem} \textnormal{\cite[Theorem 1.2]{CaCh18}}
\label{theorem:martingale_solution}
Let $\alpha \in (0,\frac12)$.
For all $x\in \R^d$ and $b\in   C([0,T],\cC^{-\alpha}(\R^d,\R^d))   
$ there exists a unique solution to the martingale problem for 
$(  (\cL_t)_{t\in (0,T]}  , \delta_x)$, in the sense that there is a unique probability measure $\P_x$ on $\Omega = C( [0,T] ,\R^d)$ such that the coordinate process $X_t(\omega) = \omega(t)$ satisfies the martingale problem for $(  (\cL_t)_{t\in (0,T]}  , \delta_x)$. Moreover, $X$ is a strong Markov process under $\P_x$   and the measure $\P_x$ depends (weakly) continuously on the drift $b$.   
\end{theorem}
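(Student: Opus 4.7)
The plan is to follow the standard paracontrolled route to singular martingale problems, pivoting on the well-posedness of the backward Cauchy problem in Proposition~\ref{proposition:solution_cauchy_problem_in_cC}. First I would establish PDE regularity and stability of $u^\phi$ under mollification of $b$, then use this to produce a martingale solution as a weak limit of classical SDEs with smooth drift, and finally extract uniqueness, the strong Markov property, and continuous dependence on $b$ from the PDE duality.

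For the PDE step, I would exploit the Young regime hypothesis $\alpha<1/2$: for $b\in C([0,T],\cC^{-\alpha})$ the operator $\cL_t=\tfrac12\Delta+b_t\cdot\nabla$ acts sensibly on functions of spatial regularity $>\alpha$ via direct paraproduct estimates, with no resonance renormalisation needed. This should yield a unique $u^\phi\in C([0,T],\cC^\beta)$ for some $\beta\in(1+\alpha,\,2-\alpha)$, and the map $b\mapsto u^\phi$ would be continuous from $C([0,T],\cC^{-\alpha})$ into $C([0,T],\cC^\beta)$, with quantitative bounds.

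For existence, I would mollify $b$ spatially to smooth drifts $b^n\to b$ in $C([0,T],\cC^{-\alpha})$. For each $n$ the classical SDE has a unique strong solution $X^n$ with law $\P_x^n$ on $\Omega$. Tightness of $\{\P_x^n\}$ follows from a Kolmogorov-type bound on $\E_x^n|X^n_t-X^n_s|^p$, which one obtains by writing $X^n_t=x+B_t+\int_0^t b^n(s,X^n_s)\,\dd s$ and controlling the drift integral by testing against the PDE solution with a well-chosen $\phi$ (uniformly in $n$, thanks to the stability bound of step one). Any weak limit $\P_x$ then solves the martingale problem: applying Itô to the smooth $u^{\phi,n}$ along $X^n$ produces the martingale identity at level $n$, and the continuity $u^{\phi,n}\to u^\phi$ together with the weak convergence of laws passes the identity to the limit, since $u^\phi$ is a genuine continuous function on $[0,T]\times\R^d$.

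Uniqueness is the PDE dual of existence. If $\P_x^1,\P_x^2$ both satisfy Definition~\ref{definition:martingale_problem}, then for $\phi\in C_{\mathrm c}^\infty$ and $t\in(0,T]$, choosing $u=u^\phi$ for terminal time $t$ (cf.\ Remark~\ref{remark:inhomogeneous}) and applying the martingale condition with $f\equiv 0$ yields $\E_x^i[\phi(X_t)]=u^\phi(0,x)$ independently of $i$, so one-dimensional marginals coincide; the flow (semigroup) property of the Cauchy problem then propagates this to all finite-dimensional marginals via conditioning on $\cF_s$. The strong Markov property is the classical Stroock--Varadhan consequence of martingale-problem uniqueness through regular conditional probabilities, and weak continuous dependence of $\P_x$ on $b$ drops out of tightness plus uniqueness. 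The main obstacle is the PDE step itself: making sense of $\cL_t u^\phi$ with distributional $b$ and, crucially, obtaining the \emph{quantitative} stability $b^n\to b\Rightarrow u^{\phi,n}\to u^\phi$ that both the approximation and uniqueness arguments consume.
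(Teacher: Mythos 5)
You should note first that the paper does not prove Theorem~\ref{theorem:martingale_solution} at all: the statement is quoted from \cite[Theorem 1.2]{CaCh18}, and Remark~\ref{remark:continuity_drift_and_prob_with_other_starting_time} merely records that the continuity in $b$ can be extracted from the proof given there. So the comparison is with the cited proof, and at the level of strategy your outline reproduces it: well-posedness of the backward Kolmogorov equation in the Young regime by paraproduct estimates alone (Proposition~\ref{proposition:solution_cauchy_problem_in_cC} in this paper), existence by mollifying $b$ and passing to a weak limit of the smooth-drift diffusions, uniqueness of one-dimensional marginals by duality with $u^\phi$ and propagation to finite-dimensional distributions by conditioning, the strong Markov property by the classical Stroock--Varadhan argument, and weak continuity in $b$ from tightness plus uniqueness.

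One step, as you wrote it, would not go through: you propose to obtain the Kolmogorov moment bound by controlling $\int_0^t b^n(s,X^n_s)\,\dd s$ ``by testing against the PDE solution with a well-chosen $\phi$'', where $\phi$ ranges over $C_{\rm c}^\infty$ terminal data for the homogeneous equation $\partial_t u+\cL_t u=0$. Such $u^\phi$ only yield martingale identities for $u^\phi(t,X_t)$, i.e.\ information about marginals; no compactly supported terminal condition reconstructs the drift integral, so tightness of the paths does not follow this way. What is actually needed (and what the cited proof uses) is a Zvonkin-type transform: solve the equation with the drift itself as source, $\partial_t w^n+\tfrac12\Delta w^n+b^n_t\cdot\nabla w^n=b^n$, $w^n(T,\cdot)=0$, componentwise, so that It\^o's formula gives
\begin{align*}
\int_s^t b^n(r,X^n_r)\,\dd r \;=\; w^n(t,X^n_t)-w^n(s,X^n_s)-\int_s^t \nabla w^n(r,X^n_r)\,\dd B_r ,
\end{align*}
and the uniform-in-$n$ bounds on $w^n$ and $\nabla w^n$ supplied by the PDE stability (uniform because $\|b^n\|_{C_T\cC^{-\alpha}}$ is uniformly bounded) then give uniform increment moments, hence tightness. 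With that replacement your argument matches the proof of \cite{CaCh18} that the paper relies on; the remaining steps (duality uniqueness, strong Markov, continuity in $b$ via tightness plus uniqueness) are standard and correctly sketched.
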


\begin{remark}
\label{remark:continuity_drift_and_prob_with_other_starting_time}
The continuity of the solution $\P$ in terms of the drift is not mentioned in \cite[Theorem 1.2]{CaCh18}, but it can be extracted from their proof. 

Observe that Theorem~\ref{theorem:martingale_solution} also implies that there exists a unique probability measure $\P_{s,x}$ on $C([s,T],\R^d)$ such that the coordinate process satisfies the martingale problem for $((\cL_t)_{t\in (s,T]},\delta_x)$. 
This can be obtained by applying Theorem~\ref{theorem:martingale_solution} to a shift of the drift, as is mentioned in Remark~\ref{remark:inhomogeneous}. 
\end{remark}

Next, our aim is to show that $X$ admits a transition density $\Gamma_{s,t}$ for $0\le s<t\le T$ (Proposition~\ref{proposition:transition_density}), which means that for $\varphi\in C_{\rm c}(\R^d)$ and $x\in \R^d$ and with $\P_{s,x}$ as in Remark~\ref{remark:continuity_drift_and_prob_with_other_starting_time}
\begin{align}
\label{eqn:transition_kernel_property}
\E_{s,x}[ \varphi(X_t)] = \int_{\R^d} \varphi(y) \Gamma_{s,t}(x,y) \dd y. 
\end{align} 
We do this by showing that $\Gamma_{t,T}(x,y) = u^{\delta_y}(  t  ,x)$ for the solution $u^{\delta_y}$ to~\eqref{eqn:cauchy_problem} with   terminal   condition $u(T,\cdot) = \delta_y$.

In order to construct the solution $u^{\delta_y}$ we have to slightly extend the results of \cite{CaCh18}. 
Indeed, in \cite[Theorem 3.1 and 3.2]{CaCh18} the well-posedness of the Cauchy problem is shown for $\phi \in \cC^{\beta}$ with $\beta \in (1+\alpha, 2-\alpha)$, and $\delta_z$ is not in this space. 
  The solution theory in \cite{CaCh18} is formulated in terms of mild solutions:  A \emph{mild solution} of \eqref{eqn:cauchy_problem} is a fixed point $u$ of $\Phi$, i.e., $\Phi u =u$, where $\Phi$ is defined on $C([0,T],\cS')\cap [\bigcup_{p\in [1,\infty]} C([0,T), \cC_p^\beta(\R^d))]$ for $\beta>1+\alpha$ by 
\begin{align}
\label{eqn:map_for_cauchy_fixed} 
(\Phi u)_s =  P_{T-s} \phi  -  \int_s^T P_{r-s} (  b_{r}   
 \cdot \nabla u_r) \dd r,   
\end{align}
where $P_t \phi := p(t,\cdot) * \phi$ for $t>0$ and $P_0 \phi = \phi$ (that $\Phi$ is well-defined follows by \ref{obs:regularities_product}). 
\begin{calc}
Let us check this by doing formal calculations. 
This means we treat $b$ as a nice function (say in $C_{\rm b}^\infty$). 
The calculations are supported by Lemma~\ref{lemma:derivative_of_integral_parameter_and_integrand_same}. 
If $\Phi u = u$, 
$F(r) = \int_0^s f(s-r,r) \dd r $, $f(s,r) = P_s(b_r \cdot \nabla u_r)$,
then 
\begin{align*}
\partial_s u_s 
& = - \Delta P_{T-s} \phi - P_{0}  ( b_s \cdot \nabla u_s) + \int_s^T \Delta P_{r-s} (b_r \cdot \nabla u_r) \dd r \\
& = - \Delta u_s - b \cdot \nabla u_s = - \cL_s u, 
\end{align*}
so that $u$ is a solution to \eqref{eqn:cauchy_problem}.
\end{calc}
In order to allow $\delta_y$ as a terminal condition, we will consider a different space that ``allows a blowup as $t \uparrow T$''. 
However, for notational elegance, we instead consider a space with ``a blowup at $0$'' and mention that $u$ is a fixed point of $\Phi$ if and only if $v$ given by $v(t,\cdot) = u(T-t,\cdot)$ is a fixed point of $\Theta$, given by 
\begin{align}
\label{eqn:def_Psi}
(\Theta v)_s =  P_s \phi  +  \int_{0}^s P_{s-r} (  b_{T-r}   
 \cdot \nabla v_r) \dd r, 
\end{align}
\begin{calc}
Sanity check, if $u = \Phi u$, then 
\begin{align*}
v_{s} = u_{T-s}
= (\Phi u)_{T-s}
& = P_{s} \phi  -  \int_{T-s}^T P_{r-(T-s)} ( b_{r} 
 \cdot \nabla u_r) \dd r \\
& = P_{s} \phi  -  \int_0^{s} P_{s-q} ( b_{T-q} 
 \cdot \nabla u_{T-q}) \dd q \\
 & = P_{s} \phi  -  \int_0^{s} P_{s-r} ( b_{T-r} 
 \cdot \nabla v_{r}) \dd r . 
\end{align*}
\end{calc}
so that we call $v$ a \emph{mild solution} of 
\begin{align}
\label{eqn:cauchy_problem_dual}
\begin{cases}
\partial_t v     - \cL_{T-t}   
 v = 0 & \qquad \mbox{ on }  (0,T]  \times \R^d, \\
  v(0,\cdot)   = \phi  & \qquad \mbox{ on } \R^d. 
\end{cases}
\end{align}
\begin{calc}
Indeed, let us suppose that $u$ is a solution to \eqref{eqn:cauchy_problem} and $v(t,\cdot) = u(T-t,\cdot)$. 
Then $v(0,x) = u(T,x) = \phi$ and because $\nabla v(s, \cdot) = \nabla u(T-s, \cdot)$, 
\begin{align*}
\partial_t v(t,x) \
= - \partial_t u(T-t,x) 
= \cL_{T-t} u(T-t,x) 
= \cL_{T-t} v(t,x). 
\end{align*}
\end{calc}
We will show that $\Theta$ has a fixed point in the following space (for suitable $\delta, \beta$). 
For $\delta\ge 0$, $\beta \in \R$ and $t>0$ we define
\begin{align*}
\|u \|_{M^{\delta}_t \cC_p^\beta} &  = \sup_{s\in (0,t]} s^{  \delta} \| u_s \|_{\cC_p^\beta}, \\
M^{\delta}_t \cC_p^\beta
& = \{ u \in C( (0,t] , \cC_p^\beta) : \|u \|_{M^{\delta}_t \cC_p^\beta} <\infty\}. 
\end{align*}

The following proposition is a slight extension of \cite[Theorem 3.1 and 3.2]{CaCh18}. 

\begin{proposition}
\label{proposition:solution_cauchy_problem_in_cC}
Let $\alpha \in (0,\frac12)$, $p\in [1,\infty]$ and $\gamma >  \alpha-1 $. 
For $\phi \in \cC_p^\gamma$, $b\in   C( [0,T] ,B_{\infty,1}^{-\alpha})$, $\beta \in (1+\alpha, 2 - \alpha)$ and $\epsilon>0$ the Cauchy problem \eqref{eqn:cauchy_problem} has a unique mild solution $u^{\phi,b}$ in $C([0,T],\cC_p^{(\gamma -\epsilon) \wedge \beta}) \cap C([0,T),\cC_p^{\beta})$ such that $u^{\phi,b}(t) \in \cC^\beta$ for all $t\in [0,T)$. 
Moreover, for all $t>0$ the map $\cC_p^\gamma \times   C( [0,T] ,B_{\infty,1}^{-\alpha})   
\rightarrow \cC^\beta$ given by $(\phi,b) \mapsto u^{\phi,b}(t,\cdot)$ is locally Lipschitz. 
\end{proposition}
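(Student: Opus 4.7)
The plan is to solve the time-reversed Cauchy problem \eqref{eqn:cauchy_problem_dual} by a Banach fixed point argument for the map $\Theta$ in a parabolically weighted space that absorbs the singularity at $s = 0$ produced by the rough initial datum. The main difference to \cite[Theorem 3.1]{CaCh18}, which assumes $\phi \in \cC^\beta$, is the use of the weight $s^{\delta}$ with $\delta = (\beta - \gamma)/2 \vee 0$: I would aim to set up the contraction in the space $M^{\delta}_{t_0} \cC_p^{\beta}$ on a small interval $[0, t_0]$, choosing $\beta \in (1+\alpha, 2-\alpha)$ close enough to $1+\alpha$ that $\delta < 1$ (which is possible exactly because $\gamma > \alpha - 1$), and then extend to $[0, T]$ by iteration.

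For the free evolution, the standard heat-smoothing estimate $\|P_s \phi\|_{\cC_p^{\beta}} \lesssim s^{-(\beta-\gamma)/2} \|\phi\|_{\cC_p^{\gamma}}$ is exactly matched by the weight. For the integral term I would invoke the Young-regime paraproduct bound (which makes sense because $(\beta - 1) + (-\alpha) > 0$)
\begin{align*}
\|b_{T-r} \cdot \nabla v_r\|_{\cC_p^{-\alpha}} \lesssim \|b\|_{C_T B_{\infty,1}^{-\alpha}} \|v_r\|_{\cC_p^{\beta}},
\end{align*}
combine it with $\|P_{s-r} f\|_{\cC_p^{\beta}} \lesssim (s-r)^{-(\beta+\alpha)/2} \|f\|_{\cC_p^{-\alpha}}$ and the weight $r^{-\delta}$, and integrate via the Beta function identity $\int_0^s (s-r)^{-(\beta+\alpha)/2} r^{-\delta} \dd r = s^{1 - (\beta+\alpha)/2 - \delta} B\bigl(1 - (\beta+\alpha)/2,\, 1 - \delta\bigr)$. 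Both Beta arguments are strictly positive: $\beta < 2 - \alpha$ gives $(\beta+\alpha)/2 < 1$, and the choice of $\beta$ above guarantees $\delta < 1$. Multiplying through by $s^{\delta}$ leaves an overall prefactor $s^{1 - (\beta+\alpha)/2}$ that vanishes as $s \downarrow 0$, furnishing the smallness needed for $\Theta$ to be a strict contraction on $M^{\delta}_{t_0} \cC_p^{\beta}$ for sufficiently small $t_0$.

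Once a unique fixed point $v$ has been produced on $[0, t_0]$, for any $t \in (0, t_0]$ the value $v_t$ lies in $\cC_p^{\beta}$, i.e., in the regular class, so the Cannizzaro--Chouk theory \cite[Theorem 3.1]{CaCh18} can be applied on $[t, T]$ both to extend the solution to the full interval $[0, T]$ and to bootstrap the regularity so as to cover every $\beta \in (1 + \alpha, 2 - \alpha)$, and not only those close to $1 + \alpha$ for which the weighted contraction was set up. Continuity $u^{\phi,b} \in C([0, T], \cC_p^{(\gamma - \epsilon) \wedge \beta})$ at the initial time then follows from the strong continuity of $(P_s)$ on $\cC_p^{\gamma - \epsilon}$ together with the fact that the integral term gains a positive power of $s$ (via the same Beta computation, applied with a different target regularity). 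The local Lipschitz dependence on $(\phi, b)$ is read off by applying the contraction estimate to the difference of two solutions: the paraproduct bound is bilinear, so differences in either $\phi$ or $b$ are controlled by exactly the same estimates.

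The main obstacle is the tight interplay of the three exponents $\alpha, \beta, \gamma$: the weight exponent $\delta$ must stay below $1$ for the Beta integral to converge, $\beta$ must exceed $1 + \alpha$ for the paraproduct to be classically defined, and $\beta$ must be strictly less than $2 - \alpha$ for the heat-smoothing exponent to be integrable. Checking that the hypothesis $\gamma > \alpha - 1$ is precisely what makes all these constraints simultaneously satisfiable, and then carrying out the bootstrap step that lifts the weighted low-regularity solution to the full range of target regularities stated in the proposition, is the technical heart of the argument.
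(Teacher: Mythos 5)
Your fixed-point scheme coincides with the paper's: the time-reversed map $\Theta$, the weighted space $M^{(\beta-\gamma)/2}_{t_0}\cC_p^\beta$, the Beta-function computation for the singular integrals, taking $\beta$ close to $1+\alpha$ (so that the weight exponent stays below $1$) when $\gamma$ is small and then restarting from the improved datum, extension to $[0,T]$ via \cite{CaCh18}, continuity at $t=0$ from the heat-semigroup estimate plus the positive power of $s$ gained by the integral term, and bilinearity for the Lipschitz bound. However, there is a genuine gap: the proposition asserts that $u^{\phi,b}(t)\in\cC^\beta=B_{\infty,\infty}^\beta$ for every $t\in(0,T)$ and that $(\phi,b)\mapsto u^{\phi,b}(t,\cdot)$ is locally Lipschitz \emph{into} $\cC^\beta$, whereas your argument only ever produces $\cC_p^\beta=B_{p,\infty}^\beta$-valued information. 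For $p<\infty$ (the case that actually matters here, since the proposition is applied with $\phi=\delta_y\in\cC_1^0$) this is not automatic: the contraction in $M^{\delta}_{t_0}\cC_p^\beta$ cannot yield $L^\infty$-based regularity because the datum lives only in an $L^p$-based scale, and the results of \cite{CaCh18} you invoke for the continuation are formulated in $L^\infty$-based spaces. The paper closes this by a separate integrability-raising bootstrap (step (e) of its proof): with $p_0=p$ and $p_i=d/(d-i(\beta-\gamma))$, the Besov embedding $\cC_{p_{i-1}}^\beta\subset\cC_{p_i}^\gamma$ allows one to re-solve the equation on successive short subintervals, upgrading the integrability from $p$ to $\infty$ in finitely many steps; since all these embeddings are continuous, the Lipschitz continuity into $\cC^\beta$ follows from the one already proved in $\cC_p^\beta$. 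Some such argument must be added.

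A secondary omission: uniqueness is claimed in the class $C([0,T],\cC_p^{(\gamma-\epsilon)\wedge\beta})\cap C([0,T),\cC_p^{\beta})$, not merely among fixed points of $\Theta$ in the weighted ball, and an arbitrary mild solution in that class need not a priori have finite $M^{\delta}_{t_0}\cC_p^\beta$-norm near $s=0$. The paper deals with this by running a second, unweighted contraction in $C([0,t_0],\cC_p^{\gamma-\epsilon})$ and checking that $\Theta$ maps $C((0,t_0],\cC_p^\beta)$ into $C([0,t_0],\cC_p^{\gamma-\epsilon})$, so that the two fixed points coincide; your proposal should include an argument of this kind to get uniqueness in the stated class.
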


Another difference with \cite{CaCh18} is that we consider $b\in   C([0,T],B_{\infty,1}^{-\alpha})   
$ instead of $b\in   C([0,T],\cC^{-\alpha})   
$. Since $B_{\infty,1}^{-\alpha} \subset \cC^{-\alpha} \subset B_{\infty,1}^{-\alpha -\epsilon}$ (as continuous embeddings),  
this does not make much of a difference. 
But our heat-kernel estimates depend on the $B_{\infty,1}^{-\alpha}$-norm and for their derivation it is more convenient to work with $B_{\infty,1}^{-\alpha}$.


Before we prove Proposition~\ref{proposition:solution_cauchy_problem_in_cC} we present two auxiliary facts, Lemma~\ref{lemma:bound_P_t} and \ref{obs:regularities_product}.

  We write $B$ for the beta function (see e.g. \cite[Section 1.1]{AnAsRo99}),
which is the function $ B:  (0,\infty)^2 \rightarrow (0,\infty)$ given by
\begin{align}
\label{eqn:beta_function_def}
B(\beta, \gamma) = \int_0^1 r^{\gamma-1} (1-r)^{\beta -1} \dd r .
\end{align}

\begin{lemma}
\label{lemma:bound_P_t}
Let $p \in [1,\infty]$, $\kappa\ge 0$, $\delta \in [0,1)$, $\alpha,\gamma \in \R$ and $\beta \in  [-\alpha,2-\alpha)  $. 
 
There exists a $C>0$ such that for all $t \in (0,1]$, 
\begin{align}
\label{eqn:bounds_P_t_and_time_integral_P_t}
& \| s \mapsto P_s \phi \|_{M^{\frac{\kappa}{2}}_t \cC^{\gamma+\kappa}_p} \le C \| \phi \|_{\cC^\gamma_p}, 
\qquad  \|P_t \phi - \phi \|_{\cC_p^{\gamma-2\delta}} \le C t^{\delta} \|\phi\|_{\cC_p^\gamma}, \\
\label{eqn:bound_time_integral_P_t_gamma}
& \left\| s \mapsto \int_0^s P_{s-r} w_r \dd r  \right\|_{M^\delta_t \cC^{\beta}_p} \le C t^{1 - \frac{\alpha + \beta}{2}}  \| w \|_{M^\delta_t \cC^{-\alpha}_p}. 
\end{align}
\end{lemma}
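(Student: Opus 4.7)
}
All three estimates ultimately rest on the single standard Schauder-type bound for the heat semigroup on Besov spaces,
\begin{equation}
\label{eqn:plan-schauder}
\|P_s \phi\|_{\cC_p^{\gamma+\kappa}} \le C\, s^{-\kappa/2} \|\phi\|_{\cC_p^\gamma} \qquad (s \in (0,1],\ \kappa \ge 0),
\end{equation}
which follows block by block from the scaling $\|\Delta_j P_s \phi\|_{L^p} \lesssim e^{-c 2^{2j} s}\|\Delta_j \phi\|_{L^p}$ together with $\sup_{j\ge -1} 2^{j\kappa} e^{-c 2^{2j} s} \lesssim s^{-\kappa/2}$. I would state and cite (or quickly verify) \eqref{eqn:plan-schauder} first and then read off each estimate from it.

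\emph{First inequality.} This is just \eqref{eqn:plan-schauder} multiplied by $s^{\kappa/2}$ and taken to the supremum in $s\in(0,t]$, so nothing new is needed.

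\emph{Second inequality.} Write, for $\phi$ smooth enough and then extend by density,
\begin{equation*}
P_t\phi - \phi \;=\; \int_0^t \tfrac12 \Delta P_s \phi\, \dd s .
\end{equation*}
Applying \eqref{eqn:plan-schauder} with exponent $\gamma + 2 - 2\delta$ (i.e.\ $\kappa = 2 - 2\delta \ge 0$) to $P_s\phi$ and using $\|\Delta f\|_{\cC_p^{\gamma-2\delta}} \lesssim \|f\|_{\cC_p^{\gamma+2-2\delta}}$ yields an integrand of size $s^{-(1-\delta)}\|\phi\|_{\cC_p^\gamma}$, which is integrable since $\delta < 1$ and produces exactly $t^\delta$.

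\emph{Third inequality.} Apply \eqref{eqn:plan-schauder} with $\gamma = -\alpha$ and $\kappa = \alpha+\beta$ (which is $\ge 0$ since $\beta \ge -\alpha$ and $<2$ since $\beta<2-\alpha$):
\begin{equation*}
\|P_{s-r} w_r\|_{\cC_p^\beta} \;\lesssim\; (s-r)^{-(\alpha+\beta)/2} \|w_r\|_{\cC_p^{-\alpha}}
\;\lesssim\; (s-r)^{-(\alpha+\beta)/2} r^{-\delta} \|w\|_{M^\delta_t \cC_p^{-\alpha}} .
\end{equation*}
Integrating in $r\in(0,s)$ and changing variables $r = s\rho$ gives, by the definition \eqref{eqn:beta_function_def} of $B$,
\begin{equation*}
\int_0^s (s-r)^{-(\alpha+\beta)/2} r^{-\delta}\, \dd r \;=\; s^{\,1-(\alpha+\beta)/2-\delta}\, B\bigl(1-\tfrac{\alpha+\beta}{2},\,1-\delta\bigr) ,
\end{equation*}
where the beta integral is finite precisely because $\delta < 1$ and $\alpha+\beta < 2$. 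Multiplying by $s^\delta$ and taking the supremum over $s\in(0,t]$ produces the factor $t^{1-(\alpha+\beta)/2}$, which is the claim.

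\emph{Expected obstacle.} There is no real obstacle; the only mild care-point is bookkeeping the admissible exponents (ensuring $\kappa \ge 0$ in the first estimate, $\delta<1$ in the second, and the two integrability conditions $\delta<1$, $\alpha+\beta<2$ in the third, all of which are guaranteed by the hypotheses $\beta\in[-\alpha,2-\alpha)$ and $\delta\in[0,1)$) and verifying that \eqref{eqn:plan-schauder} can be applied uniformly in $s\in(0,t]\subset(0,1]$ so that the implicit constant does not blow up.
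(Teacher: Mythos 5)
Your proposal is correct and follows essentially the same route as the paper: the paper also reduces all three bounds to the semigroup smoothing estimate $\|P_s\phi\|_{\cC_p^{\gamma+\kappa}}\lesssim s^{-\kappa/2}\|\phi\|_{\cC_p^{\gamma}}$ (cited from Gubinelli--Imkeller--Perkowski rather than reproved blockwise), uses $P_t\phi-\phi=\int_0^t\partial_sP_s\phi\,\dd s$ for the second bound, and the same beta-function computation $\int_0^s(s-r)^{-(\alpha+\beta)/2}r^{-\delta}\dd r=s^{1-(\alpha+\beta)/2-\delta}B\bigl(1-\tfrac{\alpha+\beta}{2},1-\delta\bigr)$ for the third. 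No gaps.
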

\begin{proof}
In \cite[Lemma A.7]{GuImPe15} it is proven (for $p=\infty$, but can be caried on mutatis mutandis for general $p \in [1,\infty]$\begin{calc} \ 
by applying \cite[Lemma 2.2]{BaChDa11} with $p$ instead of $\infty$ at the right place\end{calc}) 
 that
for all $\kappa \ge 0$ and $\gamma \in \R$ there exists a $C>0$ such that for all $t\in (0,1]$
\begin{align}
\label{eqn:bound_on_heat_kernel}
\| P_t \phi \|_{\cC_p^{\gamma+ \kappa}} \le  C t^{- \frac{\kappa}{2} }  \|\phi \|_{\cC_p^\gamma}, 
\end{align}
which implies the first bound in \eqref{eqn:bounds_P_t_and_time_integral_P_t}. 
The second bound in \eqref{eqn:bound_time_integral_P_t_gamma} follows by \eqref{eqn:bound_on_heat_kernel} as 
\begin{align*}
\|P_t \phi - \phi\|_{\cC_p^{\gamma -2 \delta}}
& = \|\int_0^t \partial_s P_s \phi \dd s \|_{\cC_p^{\gamma -2 \delta}}
\le \int_0^t \| P_s \Delta \phi \|_{\cC_p^{\gamma -2 \delta}} \dd s  \\
& \lesssim \int_0^t s^{-\frac{2-2\delta}{2}} \dd s  \|\Delta \phi \|_{\cC_p^{\gamma -2 }} \lesssim t^\delta \|\phi\|_{\cC_p^\gamma}. 
\end{align*}
The bound in \eqref{eqn:bound_time_integral_P_t_gamma} is also proven in \cite[Lemma A.9]{GuImPe15}, we give the proof to be self-contained. 
By applying \eqref{eqn:bound_on_heat_kernel} \begin{calc}(with $\gamma = -\alpha$ and $\kappa = \alpha+ \beta $ which is positive by assumption)\end{calc} we obtain  for $t \in (0,1]$ \begin{calc} and $s\in [0,t]$ 
\begin{align*}
\left\|  P_{t-s} w_s  \right\|_{\cC_p^\beta}
\cand \begin{calc} 
\lesssim (t-s)^{- \frac{\alpha+ \beta}{2}} 
\left\|   w_s  \right\|_{\cC_p^{-\alpha} }
\end{calc} \cnewline
& \lesssim (t-s)^{- \frac{\alpha+ \beta}{2}} s^{- \delta }
 \| w \|_{M_{T}^{\delta }\cC_p^{-\alpha} }, 
\end{align*}
and thus 
\end{calc}
\begin{align}
 \left\| \int_0^t P_{t-s} w_s \dd s \right\|_{\cC_p^\beta} 
& \notag 
 \lesssim 
\int_0^t (t-s)^{- \frac{\alpha+ \beta}{2}} s^{- \delta } \dd s
 \| w  \|_{M_{T}^{\delta}\cC_p^{-\alpha} } \\
 & \lesssim 
 \label{eqn:integral_P_b_dot_nabla_v_bound}
t^{-\delta + 1 - \frac{\alpha+\beta}{2}} 
B\left(1 - \tfrac{\alpha + \beta}{2}, 1 - \delta  \right)
 \| w  \|_{M_{T}^{\delta}\cC_p^{-\alpha} } . 
\end{align}
\begin{calc}
Observe that $1 - \frac{\alpha + \beta}{2}, 1-\delta \in (0,\infty)$ by assumption. 
\end{calc}
This proves \eqref{eqn:bound_time_integral_P_t_gamma}.
\end{proof}

\begin{obs}
\label{obs:regularities_product}
Let $\alpha > 0$ and let $\beta >  1 + \alpha$ and $\epsilon > 0$ be such that $1 + \alpha + \epsilon \le \beta$. Then we have by Theorem~\ref{theorem:product_estimates} together with Bernstein's inequality (\cite[Lemma 2.1 or 2.78]{BaChDa11}):
\begin{align*}
\| a \cdot \nabla w \|_{B_{p,\infty}^{-\alpha}} \lesssim 
\|a\|_{B_{\infty,1}^{-\alpha}} \| \nabla w\|_{B_{p,\infty}^{\alpha+\epsilon}} \lesssim \|a\|_{B_{\infty,1}^{-\alpha}} \| w\|_{B_{p,\infty}^{\beta}}.
\end{align*}
\end{obs}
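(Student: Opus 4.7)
The plan is to decompose the product componentwise via Bony's paraproduct decomposition,
\[
a \cdot \nabla w = a \para \nabla w + a \arap \nabla w + a \reso \nabla w,
\]
and to estimate each of the three pieces using Theorem~\ref{theorem:product_estimates}. Since $-\alpha < 0$, the standard paraproduct estimate yields
\[
\| a \para \nabla w \|_{B_{p,\infty}^{-\alpha+(\alpha+\epsilon)}} \lesssim \| a \|_{\cC^{-\alpha}} \| \nabla w \|_{B_{p,\infty}^{\alpha+\epsilon}},
\]
and the symmetric paraproduct $a \arap \nabla w$ is handled in the same manner. Because $-\alpha + (\alpha+\epsilon) = \epsilon > 0$, the resonance $a \reso \nabla w$ is well-defined and obeys the same bound in $B_{p,\infty}^{\epsilon}$. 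Summing the three contributions and invoking the continuous embeddings $B_{\infty,1}^{-\alpha} \hookrightarrow \cC^{-\alpha}$ and $B_{p,\infty}^{\epsilon} \hookrightarrow B_{p,\infty}^{-\alpha}$ (the latter valid since $\epsilon > -\alpha$) yields the first inequality.

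For the second inequality, Bernstein's inequality (equivalently the fact that each component of $\nabla$ maps $B_{p,q}^{s}$ continuously into $B_{p,q}^{s-1}$) gives
\[
\| \nabla w \|_{B_{p,\infty}^{\beta-1}} \lesssim \| w \|_{B_{p,\infty}^{\beta}}.
\]
The hypothesis $1+\alpha+\epsilon \le \beta$ rearranges to $\alpha+\epsilon \le \beta-1$, so the embedding $B_{p,\infty}^{\beta-1} \hookrightarrow B_{p,\infty}^{\alpha+\epsilon}$ closes the argument.

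The only delicate point is that the resonance $a \reso \nabla w$ is defined only because the strict positivity $\epsilon > 0$ ensures that the sum of regularities of $a$ and $\nabla w$ is positive; everything else reduces to routine paraproduct calculus and Besov embeddings, so I do not expect any further obstacle.
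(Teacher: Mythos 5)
There is a genuine flaw in the treatment of the term $a \arap \nabla w$. Your paraproduct estimate for $a \para \nabla w$ (low frequencies of $a$, high frequencies of $\nabla w$) is correct, and so is the resonance bound, since $-\alpha + (\alpha+\epsilon) = \epsilon > 0$. But the symmetric term is \emph{not} ``handled in the same manner'': for $a \arap \nabla w$ the high-frequency factor is $a$, which has regularity $-\alpha<0$, and the product inherits only that regularity. The correct estimate is of the form
\begin{align*}
\| a \arap \nabla w \|_{B_{p,\infty}^{-\alpha}} \lesssim \| \nabla w \|_{L^{p}} \, \| a \|_{\cC^{-\alpha}} \lesssim \| \nabla w \|_{B_{p,\infty}^{\alpha+\epsilon}} \, \| a \|_{\cC^{-\alpha}},
\end{align*}
using $B_{p,\infty}^{\alpha+\epsilon} \hookrightarrow L^p$ because $\alpha+\epsilon>0$; there is no gain to regularity $\epsilon$ for this term. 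Indeed, if all three pieces were in $B_{p,\infty}^{\epsilon}$, the product $a\cdot\nabla w$ would be smoother than $a$ itself, which is false in general (take $\nabla w$ nonvanishing on an open set). Consequently your step ``summing the three contributions and invoking $B_{p,\infty}^{\epsilon} \hookrightarrow B_{p,\infty}^{-\alpha}$'' does not go through as stated, although the final conclusion survives because the target space is only $B_{p,\infty}^{-\alpha}$, and the corrected bound for $a \arap \nabla w$ already lives there.

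For comparison, the paper does not redo the Bony decomposition at this point: the observation is a direct application of Theorem~\ref{theorem:product_estimates} with $u=a$, $v=\nabla w$, $(p_1,q_1)=(\infty,1)$, $(p_2,q_2)=(p,\infty)$, $r=\infty$, followed by Bernstein's inequality and the embedding $B_{p,\infty}^{\beta-1}\hookrightarrow B_{p,\infty}^{\alpha+\epsilon}$ (your second inequality is fine). The proof of that theorem in the appendix handles the three Bony terms exactly as above, in particular estimating $u\arap v$ only in $B_{p,r}^{\alpha}$ via an $L^{p_2}$ bound on the low-frequency factor, see \eqref{eqn:bound_arap_with_L_p}. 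So your argument essentially re-proves the product estimate, but the $\arap$ step needs to be replaced by the estimate displayed above.
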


\begin{proof}[Proof of Proposition~\ref{proposition:solution_cauchy_problem_in_cC}]
If $\gamma \ge \beta$, then the statement follows directly from \cite[Theorem 3.2]{CaCh18}. Therefore, we assume that $\gamma<\beta$ and it is sufficient to  show that the statement holds for ``$t_0$'' instead of ``$T$'', where $t_0$ will be chosen small, as we can extend the solution to $[t_0,T]$ by \cite[Theorem 3.2]{CaCh18}.

As mentioned before, it is sufficient to consider the fixed point problem for $\Theta$ as in \eqref{eqn:def_Psi} instead of $\Phi$. 
Let us write $\Theta_t^\phi$ for $\Theta$ as in \eqref{eqn:def_Psi} but with ``$T$'' replaced by ``$t$''.
We will show that there exists a $t_0$ such that 
\begin{enumerate}
\item 
\label{item:fixed_point_in_M}
$\Theta_{t_0}^\phi$ has a unique fixed point in $M_{t_0}^{\frac{\beta-\gamma}{2}}\cC_p^\beta$, 
\item 
\label{item:fixed_point_in_C}
$\Theta_{t_0}^\phi$ has a unique fixed point in $C([0,t_0],\cC_p^{\gamma-\epsilon})$, 
\item 
\label{item:fixed_points_agree}
$\Theta_{t_0}^\phi$ maps $C((0,t_0],\cC_p^{\beta})$ and thus $M_{t_0}^{\frac{\beta-\gamma}{2}}\cC_p^\beta$ into $C([0,t_0],\cC_p^{\gamma-\epsilon})$, so that the fixed point in $C([0,t_0],\cC_p^{\gamma-\epsilon})$ agrees with the fixed point in $M_{t_0}^{\frac{\beta-\gamma}{2}}\cC_p^\beta$, 
\item 
\label{item:continuity_at_t_for_p}
for all $t>0$ the map $\cC_p^\gamma \times   C( [0,T] ,B_{\infty,1}^{-\alpha})   
\rightarrow \cC_p^\beta$ given by $(\phi,b) \mapsto u^{\phi,b}(t,\cdot)$ is locally Lipschitz,
\item 
\label{item:increased_integrability}
the fixed point $v$ satisfies $v(t) \in \cC^\beta$ for $t\in (0,t_0]$ and the continuity in \ref{item:continuity_at_t_for_p} can be shown for $p=\infty$, by showing that we can ``increase the integrability parameter $p$ to $\infty$''. 
\end{enumerate}

First, we assume that $\gamma>-\alpha$ and show \ref{item:fixed_point_in_M}--\ref{item:increased_integrability}. After that we show how one can treat $\gamma \in (\alpha-1,-\alpha]$ too. 

\ref{item:fixed_point_in_M}
By combining the observation in~\ref{obs:regularities_product} with Lemma~\ref{lemma:bound_P_t} with $\kappa = \beta -\gamma $ and $\delta = \frac{\beta - \gamma}{2}$ (observe that by assumption $\kappa>0$ and $\delta \in (0,1)$, because $0<\beta - \gamma < 2 - \alpha + \alpha$); for $t\in (0,1]$
\begin{align}
\| \Theta_t^\phi v \|_{M^{\frac{\beta-\gamma}{2}}_{t} \cC_p^\beta} 
\notag & \lesssim \|\phi \|_{\cC_p^\gamma}
+
\| s \mapsto \int_0^s P_{s-r}( b_{t-r} \cdot \nabla v_r) \dd r \|_{M_{t}^{\frac{\beta - \gamma}{2}} \cC_p^\beta} \\
\notag & \lesssim \|\phi \|_{\cC_p^\gamma}
+ t^{1- \frac{\alpha +\beta}{2}} 
\| s \mapsto  b_{t-s} \cdot \nabla v_s \|_{M_{t}^{\frac{\beta - \gamma}{2}} \cC_p^{-\alpha}} \\
\label{eqn:bound_on_Theta_t_phi}
& \lesssim \|\phi \|_{\cC_p^\gamma} + t^{1- \frac{\alpha +\beta}{2}} 
\|b\|_{C_1 B_{\infty,1}^{-\alpha} }
 \left\|  v  \right\|_{M^{\frac{\beta-\gamma}{2}}_t \cC_p^{\beta} },
\end{align} 
and, moreover 
\begin{align}\label{eqn:contraction_bound_T}
\| \Theta_t^\phi v - \Theta_t^\phi \tilde v \|_{M_{t}^{\frac{\beta - \gamma}{2}} \cC_p^\beta} 
& \lesssim t^{1 - \frac{\alpha + \beta}{2} } \|b\|_{  C_1   B_{\infty,1}^{-\alpha}} \| v - \tilde v  \|_{M_{t}^{\frac{\beta- \gamma}{2}} \cC_p^\beta}. 
\end{align}
That $\Theta_t^\phi v$ forms an element of $C((0,t],\cC_p^\beta)$ follows by Lemma~\ref{lemma:bound_P_t}. 
Therefore, with \eqref{eqn:bound_on_Theta_t_phi} it follows that $\Theta_t^\phi$ maps $M^{\frac{\beta-\gamma}{2}}_t \cC^\beta_p$ to itself. 
By \eqref{eqn:contraction_bound_T} then follows that for sufficiently small $t_0$ the map $\Theta_{t_0}^\phi$ is a contraction on the Banach space $M^{\frac{\beta-\gamma}{2}}_{t_0} \cC^\beta_p$ and it has a unique fixed point in that space. 

\ref{item:fixed_point_in_C} When $t_0$ is as above, then $\Theta_{t_0}^\phi$ has a unique fixed point in $C([0,t_0],\cC_p^{\gamma-\epsilon})$ which follows from the following estimates which follow similarly as the above ones (use that $\gamma > \beta$ and \eqref{eqn:bound_time_integral_P_t_gamma} with $\beta = -\alpha$ and $\delta=0$) 
\begin{align*}
\| \Theta_t^\phi v \|_{C([0,t],\cC_p^{\gamma-\epsilon})} 
& \lesssim \|\phi \|_{\cC_p^{\gamma-\epsilon}} + t 
\|b\|_{C_1 B_{\infty,1}^{-\alpha} }
 \left\|  v  \right\|_{C([0,t],\cC_p^{\gamma-\epsilon})}, \\
\| \Theta_t^\phi v - \Theta_t^\phi \tilde v \|_{C([0,t],\cC_p^{\gamma-\epsilon})} 
\cand 
\begin{calc}
= \| s \mapsto \int_0^s P_{s-r}( b \cdot \nabla (v_r - \tilde v_r)) \dd r \|_{C([0,t],\cC_p^{\gamma-\epsilon})} 
\end{calc}
\cnewline
\cand 
\begin{calc}
\lesssim t \| s \mapsto  b \cdot \nabla (v_s - \tilde v_s)\|_{C([0,t],\cC_p^{-\alpha})} 
\end{calc} \cnewline 
& \lesssim t \|b\|_{  C_1   B_{\infty,1}^{-\alpha}} \| v - \tilde v  \|_{C([0,t],\cC_p^{\gamma-\epsilon})} .
\end{align*}

\ref{item:fixed_points_agree}
That $\Theta_{t_0}^\phi$ maps $C((0,t_0], \cC_p^\beta)$ into $C([0,t_0], \cC_p^{\gamma-\epsilon}) $, which means that $\Theta_{t_0}^\phi (v) (t)$ converges to $\phi$ in $\cC_p^{\gamma-\epsilon}$ as $t\downarrow 0$ for any $v\in C((0,t_0], \cC_p^\beta)$,  follows from the second bound in \eqref{eqn:bounds_P_t_and_time_integral_P_t} and the following estimate  (by \ref{obs:regularities_product}, which follows similarly to \eqref{eqn:bound_on_Theta_t_phi})
\begin{align*}
\|\Theta_{t_0}^\phi(v)(t) - \phi \|_{\cC_p^{\gamma-\epsilon}}
\le \| P_t \phi - \phi\|_{\cC_p^{\gamma-\epsilon}} 
+ t^{1- \frac{\alpha +\beta}{2}} 
\|b\|_{C_1 B_{\infty,1}^{-\alpha} }
 \left\| v  \right\|_{C((0,t_0], \cC_p^{\beta}) }. 
\end{align*}

\ref{item:continuity_at_t_for_p} 
Let us write $v^{\phi,b}$ for the solution of \eqref{eqn:cauchy_problem_dual} (with $\cL_t$ as in \eqref{eqn:def_operator_cL}). 
To see the continuity of the solution with respect to $b$ and $\phi$, let $b_1, b_2 \in   C([0,t_0], B_{\infty,1}^{-\alpha})  $ and $\phi_1,\phi_2 \in \cC_p^{\gamma}$. 
Let $v_i = v^{\phi_i,b_i}$ for $i\in \{1,2\}$. By Lemma~\ref{lemma:bound_P_t} and by~\ref{obs:regularities_product} we have 
\begin{align*}
\|v_1 - v_2\|_{M_{t}^{\frac{\beta - \gamma}{2}} \cC_p^\beta}
& \lesssim 
\|\phi_1 - \phi_2 \|_{\cC_p^\gamma} 
+ t^{ 1 - \frac{\alpha + \beta }{2}} 
\|b_1\|_{  C_t   B_{\infty,1}^{-\alpha} }
 \left\|  v_1 - v_2  \right\|_{M^{\frac{\beta-\gamma}{2}}_t \cC_p^{\beta} } \\
 & \qquad 
 + t^{ 1 - \frac{\alpha + \beta}{2}} 
\|b_1 - b_2\|_{  C_t   B_{\infty,1}^{-\alpha} }
 \left\|  v_2  \right\|_{M^{\frac{\beta-\gamma}{2}}_t \cC_p^{\beta} }. 
\end{align*}
Hence there exists a $\delta \in (0,t_0)$ (small enough, e.g., $\delta^{ 1 - \frac{\alpha + \beta}{2}}  \|b_1\|_{  C_\delta   B_{\infty,1}^{-\alpha} } <\frac12 $) such that 
\begin{align}
\label{eqn:continuity_estimate_small_delta}
\|v_1 - v_2\|_{M_{\delta}^{\frac{\beta - \gamma}{2}} \cC_p^\beta}
\lesssim 
\|\phi_1 - \phi_2 \|_{\cC_p^\gamma} 
+ 
\|b_1 - b_2\|_{  C_{t_0}  B_{\infty,1}^{-\alpha} }
 \left\|  v_2  \right\|_{M^{\frac{\beta-\gamma}{2}}_{t_0} \cC_p^{\beta} }. 
\end{align}
So for $t\in (0,\delta]$ we obtain the desired continuity. 
By an iteration argument we can obtain the continuity for all $t\in (0,t_0]$, as for example for $t\in (\delta,2\delta]$ we have $v_i(t) = v^{v_i(\delta),b}(t-\delta)$. 

\ref{item:increased_integrability}
It remains to show that we can increase the integrability from $p$ to $\infty$, i.e., that $v_t \in \cC^\beta$ for all $t > 0$ and that also as an element of $\cC^\beta$ the solution $v_t$ for fixed $t>0$ depends continuously on $b$ and $\phi$. First we show that if $t > 0$, then $v_s \in \cC^\beta$ for all $s > t$. To simplify notation we only consider the most extreme case $p=1$, but the argument for general $p$ is essentially the same. Let $n\in \N_0$ be such that 
\begin{align*}
n (\beta - \gamma) < d , \qquad (n+1) (\beta - \gamma) \ge d.
\end{align*}
Write $p_0=1$ and for $i\in \{1,\dots, n\}$
\begin{align*}
p_i = \frac{d}{d - i(\beta - \gamma)} \in (1,\infty). 
\end{align*}
Then $\beta - \frac{d}{p_{n}} \ge \gamma$ and $\beta - d ( \tfrac{1}{p_{i-1}} - \tfrac{1}{p_{i}}) = \gamma$ for all $i \in \{1,\dots, n-1 \}$, 
\begin{calc}
indeed
\begin{align*}
\frac{d}{p_n} & = d- n (\beta - \gamma) 
\begin{cases}
>0 \\
= d - (n+1)(\beta-\gamma) + \beta - \gamma \le \beta - \gamma, 
\end{cases}
\\
 d ( \tfrac{1}{p_{i}} - \tfrac{1}{p_{i-1}})
&  = d - i(\beta -\gamma) - (d - (i-1)(\beta - \gamma)) = \gamma - \beta,
\end{align*}
\end{calc}
hence the Besov embedding theorem \cite[Proposition 2.71]{BaChDa11} gives $\cC_{p_{i-1}}^\beta \subset \cC_{p_i}^\gamma$ for all  $i \in \{1,\dots,n-1\}$, and $\cC_{p_n}^\beta \subset \cC^\gamma$. 
We have $v_{\frac{t}{n}} \in \cC_1^\beta \subset \cC_{p_1}^{\gamma}$. 
By considering the equation~\eqref{eqn:cauchy_problem_dual} with initial condition $v_{\frac{t}{n}}$ we obtain that $v_s$ is in $\cC_{p_1}^\beta$ for $s> \frac{t}{n}$, in particular $v_{\frac{2}{n}t} \in \cC_{p_2}^\gamma $. Repeating the argument we obtain $v_{\frac{i}{n}t} \in \cC_{p_i}^\gamma$ for all $i\in \{1,\dots,n-1\}$ and $v_t\in \cC^\beta$, so indeed $v_s \in \cC^\beta$ for all $s > t$. 
As $t$ was arbitrary, we have shown that $v_t \in \cC^\beta$ for all $t > 0$. 
As all the inclusions $\subset$ above are given by continuous embeddings, the continuity of the solution with respect to $\phi$ and $b$ follows from the continuity shown in \ref{item:continuity_at_t_for_p}. 

We are left to show that we can also treat $\gamma\in (\alpha - 1, -\alpha]$. Let $\gamma$ be as such. We choose $\tilde \beta \in ( 1+\alpha , 2- \alpha)$ such that $\tilde \beta - \gamma <2$. Then we have $\tilde \beta> \gamma$ and $\frac{\tilde \beta- \gamma}{2}\in (0,1)$, so that the conditions of observation~\ref{obs:regularities_product} and  Lemma~\ref{lemma:bound_P_t} are satisfied. Hence we obtain also \eqref{eqn:bound_on_Theta_t_phi} and \eqref{eqn:contraction_bound_T} with ``$\tilde \beta$'' instead of ``$\beta$''. So then we find a $\tilde t_0 \in (0,t_0)$ such that $\Theta_{\tilde t_0}^\phi$ has a fixed point $\tilde v$ in $M_{\tilde t_0}^{\frac{\tilde \beta - \gamma}{2}} \cC_p^{\tilde \beta}$. 
Let $w$ be the fixed point of $\Phi_{t_0-\tilde t_0}^{\tilde v(\tilde t_0)}$ in $M_{t_0-\tilde t_0}^{\frac{\beta-\tilde\beta}{2}}$, which exists by \ref{item:fixed_point_in_M} because $\tilde \beta > - \alpha$. 
Then $v(t) := \tilde v(t)$ for $t\in (0,\tilde t_0]$ and $v(t):= w(t-t_0)$ for $t\in (\tilde t_0,t_0]$ is a fixed point of $\Phi_{t_0}^\phi$ such that $(\tilde t_0,t_0] \mapsto \cC_p^\beta$, $t\mapsto v(t)$ is continuous. As $\tilde t_0$ can be taken arbitrarily small, we conclude that $v\in C((0,t_0],\cC_p^\beta)$. 
Similarly, we can obtain the continuity of the solution by using \eqref{eqn:continuity_estimate_small_delta} with ``$(\beta,\gamma)$'' replaced by ``$(\beta,\tilde \beta)$'' and using \eqref{eqn:continuity_estimate_small_delta} with ``$(\beta,\gamma)$'' replaced by ``$(\tilde \beta,\gamma)$''. 
\end{proof}

\begin{obs}
A direct computation using that $\Delta_i \delta_z(x) = \cF^{-1}(\rho(2^{-i} \cdot))(x-z) = 2^{id} \cF^{-1}(\rho)(2^i (x-z)) $ for $i \ge 0$ shows that the Dirac delta $\delta_z$ is in $\cC_p^{-d(1-\frac{1}{p})}$ for all $p \in [1,\infty]$, so in particular $\delta_z \in \cC_1^0$. Moreover, $\mathcal F^{-1} \rho_i$ is a Schwartz function for fixed $i\ge -1$, and therefore $\mathbb R^d \ni z \mapsto \Delta_i \delta_z \in L^p$ is continuous. This easily yields that for $\epsilon>0$ the map $\R^d \ni z \mapsto \delta_z \in \cC^{-\epsilon}_1$ is continuous.
\begin{calc}
Indeed, we have $\Delta_i \delta_0 = \cF^{-1}(\rho_i \hat \delta_0) = \cF^{-1}(\rho_i) = \cF^{-1}(\rho(2^{-i} \cdot)) = 2^{id} \cF^{-1}(\rho)(2^i \cdot) $ and thus for $i\ge 0$
\begin{align*}
\|\Delta_i \delta_0 \|_{L^p} 
& = 2^{id} (\int \cF^{-1}(\rho)(2^i x)^p \dd x)^{\frac1p} 
= 2^{id} (2^{-\frac{di}{p}} \int \cF^{-1}(\rho)(x)^p \dd x)^{\frac1p} \\
& = 2^{i d (1- \frac{1}{p})} \| \cF^{-1}(\rho) \|_{L^p}. 
\end{align*}
Hence $\delta_0 \in B_{p,\infty}^{-d(1-\frac{1}{p})}$, similarly $\delta_z \in \cC_p^{-d(1-\frac{1}{p})}$. 
\end{calc}
\end{obs}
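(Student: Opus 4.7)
The plan is to verify the three claims in the order stated. For the first claim $\delta_z \in \cC_p^{-d(1-\frac{1}{p})}$, I would start from the identity $\Delta_i \delta_z(x) = 2^{id} \cF^{-1}(\rho)(2^i(x-z))$ given for $i \ge 0$ and compute directly: the change of variables $y = 2^i(x-z)$ yields $\|\Delta_i \delta_z\|_{L^p} = 2^{id(1-\frac{1}{p})} \|\cF^{-1}(\rho)\|_{L^p}$, which is finite because $\cF^{-1}(\rho)$ is a Schwartz function. The remaining block $\Delta_{-1}\delta_z(x) = \cF^{-1}(\rho_{-1})(x-z)$ is itself a translated Schwartz function, hence bounded in $L^p$ uniformly in $z$. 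Taking $\sup_{i \ge -1} 2^{-id(1-\frac{1}{p})} \|\Delta_i \delta_z\|_{L^p}$ then yields $\delta_z \in \cC_p^{-d(1-\frac{1}{p})}$ with norm independent of $z$, and the case $p=1$ specialises to $\delta_z \in \cC_1^0$ uniformly in $z$.

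For the continuity $z \mapsto \Delta_i \delta_z \in L^p$ at fixed $i \ge -1$, I would observe that $\Delta_i \delta_z(\cdot) = \cF^{-1}(\rho_i)(\cdot - z)$ is simply the translate by $z$ of the fixed Schwartz function $\cF^{-1}(\rho_i)$. Continuity of the translation map in $L^p$ is standard for $p \in [1,\infty)$ by density of compactly supported continuous functions, and for $p = \infty$ it follows directly from the uniform continuity of Schwartz functions.

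For the final claim, the continuity of $\R^d \ni z \mapsto \delta_z \in \cC_1^{-\epsilon}$, I would combine the previous two steps via a standard Besov-norm splitting. Given $\eta > 0$, the uniform bound $C := \sup_{\zeta \in \R^d} \|\delta_\zeta\|_{\cC_1^0} < \infty$ from the first part gives the crude estimate $2^{-i\epsilon}\|\Delta_i(\delta_z - \delta_{z'})\|_{L^1} \le 2 \cdot 2^{-i\epsilon} C$, so I can choose $N$ large enough that this is below $\eta/2$ for all $i \ge N$. For the finitely many remaining indices $-1 \le i < N$, the $L^1$-continuity from the second part lets me take $|z-z'|$ small enough to make each $2^{-i\epsilon}\|\Delta_i(\delta_z - \delta_{z'})\|_{L^1}$ less than $\eta/2$. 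Since $\|\delta_z - \delta_{z'}\|_{\cC_1^{-\epsilon}}$ is the supremum over these two regimes, we conclude $\|\delta_z - \delta_{z'}\|_{\cC_1^{-\epsilon}} < \eta$. There is no genuine obstacle anywhere: the entire observation reduces to the fact that each Littlewood-Paley block of $\delta_z$ is a rescaled translate of a fixed Schwartz function, and the high-frequency tail is tamed automatically by the negative regularity index $-\epsilon$.
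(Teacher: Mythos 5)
Your proposal is correct and follows essentially the same route as the paper: the scaling computation $\|\Delta_i\delta_z\|_{L^p}=2^{id(1-\frac1p)}\|\cF^{-1}\rho\|_{L^p}$ for the membership claim, translation-continuity of the fixed Schwartz functions $\cF^{-1}\rho_i$ in $L^p$ for the blockwise continuity, and the standard high/low frequency splitting (uniform $\cC_1^0$ bound on the tail, finitely many continuous blocks below the cutoff) for the continuity of $z\mapsto\delta_z\in\cC_1^{-\epsilon}$, which is exactly what the paper's ``easily yields'' refers to. You also correctly treat the $i=-1$ block and the $p=\infty$ case via uniform continuity, so there is nothing to add.
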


\begin{corollary}[of Proposition~\ref{proposition:solution_cauchy_problem_in_cC}]
\label{corollary:limit_gamma_n}
Let $\alpha \in (0,\frac12)$  and $b\in   C([0,T], B_{\infty,1}^{-\alpha}(\R^d,\R^d)) $. 
  
For $t\in [0,T)$ and $n\in \N$ let 
$ b^{(n)}_t= \sum_{i=1}^n \Delta_i b_t  \in C_{\rm b}^\infty(\R^d,\R^d)$
and let 
$\Gamma_{t,T}(x,y) = u^{\delta_y,b}(t,x)$ and $\Gamma_{t,T}^{(n)}(x,y)=u^{\delta_y,b^{(n)} }(t,x)$ (notation as in Proposition~\ref{proposition:solution_cauchy_problem_in_cC}).  
Then $\Gamma_{t,T}$ and $\Gamma_{t,T}^{(n)}$ are continuous on $\R^d \times \R^d$ and we have  for all $t\in [0,T)$ and $\mu \in \N_0^d$ with $|\mu|\le 1$: 
\begin{align*}
\sup_{x,y\in \R^d}
|\partial^\mu_x [ \Gamma_{t,T}(x,y) - \Gamma_{t,T}^{(n)}(x,y)]| \xrightarrow{n\rightarrow \infty} 0 . 
\end{align*}
\end{corollary}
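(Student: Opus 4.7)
The plan is to first establish joint continuity of $\Gamma_{t,T}$ and $\Gamma_{t,T}^{(n)}$ on $\R^d \times \R^d$, and then extract the uniform convergence from the local Lipschitz dependence on $(\phi,b)$ established in Proposition~\ref{proposition:solution_cauchy_problem_in_cC}, carefully tracking constants in the variable $y$. Fix $t \in [0,T)$, $\epsilon \in (0, 1-\alpha)$, and $\beta \in (1+\alpha, 2-\alpha)$. Since $\delta_y \in \cC_1^0 \subset \cC_1^{-\epsilon}$ with $-\epsilon > \alpha - 1$, Proposition~\ref{proposition:solution_cauchy_problem_in_cC} applied with $\gamma = -\epsilon$ and $p = 1$ gives $u^{\delta_y,b}(t,\cdot) \in \cC^\beta$ (via the integrability bootstrap of item (e)), which embeds continuously in $C^1$ since $\beta > 1$; this handles continuity in $x$. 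For continuity in $y$, the observation preceding the corollary states that $\R^d \ni y \mapsto \delta_y \in \cC_1^{-\epsilon}$ is continuous, so the local Lipschitz dependence on $\phi$ yields continuity of $y \mapsto u^{\delta_y,b}(t,\cdot)$ as a $\cC^\beta$-valued map, and the embedding $\cC^\beta \hookrightarrow C_{\rm b}$ then gives joint continuity of $(x,y) \mapsto \Gamma_{t,T}(x,y)$. The same argument works verbatim for $\Gamma_{t,T}^{(n)}$ since $b^{(n)} \in C([0,T],B_{\infty,1}^{-\alpha})$.

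For the uniform convergence, the crucial observation is that $\|\delta_y\|_{\cC_p^\gamma}$ is independent of $y$, because every Littlewood-Paley block $\Delta_i \delta_y$ is a translate of $\Delta_i \delta_0$. I would apply the Lipschitz-type bound~\eqref{eqn:continuity_estimate_small_delta} with $\phi_1 = \phi_2 = \delta_y$, $b_1 = b$, $b_2 = b^{(n)}$, $p = 1$ and $\gamma = -\epsilon$ to obtain, on a small initial interval $[0,\tau]$,
\begin{align*}
\sup_{y \in \R^d}\bigl\| v^{\delta_y,b} - v^{\delta_y,b^{(n)}} \bigr\|_{M_{\tau}^{(\beta+\epsilon)/2} \cC_1^\beta} \lesssim \|b - b^{(n)}\|_{C_T B_{\infty,1}^{-\alpha}},
\end{align*}
where $v^{\phi,b}$ is the mild solution of~\eqref{eqn:cauchy_problem_dual} and the implicit constant depends only on $\|b\|_{C_T B_{\infty,1}^{-\alpha}}$ and the ($y$-independent) norm $\|\delta_0\|_{\cC_1^{-\epsilon}}$. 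Iterating this estimate over finitely many short intervals covering $[0,T-t]$, and carrying the uniformity in $y$ through the integrability bootstrap from item (e) (which chains Besov embeddings $\cC_{p_{i-1}}^\beta \hookrightarrow \cC_{p_i}^\gamma$ with fixed-point estimates in $M_{\cdot}^{(\beta-\gamma)/2}\cC_{p_i}^\beta$), gives the analogous bound at the fixed time $T-t$ with $\cC_1^\beta$ replaced by $\cC^\beta$. The embedding $\cC^\beta \hookrightarrow C^1$ then yields
\begin{align*}
\sup_{x,y \in \R^d}\bigl| \partial^\mu_x[\Gamma_{t,T}(x,y) - \Gamma_{t,T}^{(n)}(x,y)] \bigr| \lesssim \|b - b^{(n)}\|_{C_T B_{\infty,1}^{-\alpha}} \xrightarrow{n \to \infty} 0
\end{align*}
for $|\mu| \le 1$.

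The main obstacle is guaranteeing uniformity of the bounds in $y \in \R^d$, since Proposition~\ref{proposition:solution_cauchy_problem_in_cC} only states \emph{local} Lipschitz continuity pointwise in the initial condition $\phi$. This is overcome by the translation invariance of $\|\delta_y\|_{\cC_p^\gamma}$, combined with the observation that the constants in the contraction estimates~\eqref{eqn:bound_on_Theta_t_phi}--\eqref{eqn:contraction_bound_T} depend on $\phi$ only through its $\cC_p^\gamma$-norm. Propagating this uniformity faithfully through the integrability bootstrap of item (e), which composes several short-time fixed-point arguments, requires some bookkeeping but no new ideas.
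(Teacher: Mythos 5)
Your proposal is correct and follows essentially the same route as the paper: both rest on the (locally) Lipschitz dependence of $u^{\phi,b}(t,\cdot)\in\cC^\beta$ on $(\phi,b)$ from Proposition~\ref{proposition:solution_cauchy_problem_in_cC}, the $y$-uniform control of $\|\delta_y\|_{\cC_1^\gamma}$ by translation invariance, the continuity of $y\mapsto\delta_y$ in $\cC_1^{-\epsilon}$, and the embedding $\cC^\beta\hookrightarrow C^1$ for $\beta>1+\alpha$ (your unwinding of \eqref{eqn:continuity_estimate_small_delta} and the bootstrap is just the proof of that proposition made explicit). The only step you assert without justification is $\|b^{(n)}-b\|_{C_T B_{\infty,1}^{-\alpha}}\to 0$, which the paper proves via a ``$3\epsilon$'' argument (pointwise-in-time convergence from the $\ell^1$-summability defining $B_{\infty,1}^{-\alpha}$, uniform boundedness of the partial-sum operators, and uniform continuity of $s\mapsto b_s$); this is true and standard, so it is a minor omission rather than a gap.
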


\begin{proof}
The continuity follows from Proposition~\ref{proposition:solution_cauchy_problem_in_cC}. 
 
Because there exists a $C>0$ such that $\| b^{(n)}_s - b^{(n)}_r  \|_{B_{\infty,1}^{-\alpha}} \le C \| b_s - b_r \|_{B_{\infty,1}^{-\alpha}}$ for all $n\in\N$, $s,r\in [0,\infty)$ and $\|b^{(n)}_s - b_s\|_{B_{\infty,1}^{-\alpha }} \rightarrow 0$ for all $s\in [0,\infty)$ we obtain by a ``$3\epsilon$ argument'' that 
\begin{align*}
\|b^{(n)} - b\|_{C_t B_{\infty,1}^{-\alpha }}
 \rightarrow 0
\end{align*}
\begin{calc}
Indeed: Let $\epsilon>0$. Let $\delta>0$ be such that $|s-r|<\delta$ implies $\|b_s - b_r\|_{B_{\infty,1}^{-\alpha }} <\epsilon$. 
Let $S\subset [0,t]$ be a finite set such that for all $r\in [0,t]$ there is an $s\in S$ with $|r-s|<\delta$. Let $N\in \N$ be such that $\|b^{(n)}_s - b_s\|_{B_{\infty,1}^{-\alpha }} < \epsilon$ for all $n\ge N$ and $s\in S$. 
Then for all $r\in [0,t]$ we have with $s$ being such that $|s-r|<\delta$, for all $n\ge N$
\begin{align*}
\|b^{(n)}_r - b_r\|_{B_{\infty,1}^{-\alpha }}
\le 
\|b^{(n)}_r - b^{(n)}_s\|_{B_{\infty,1}^{-\alpha }}
+\|b^{(n)}_s - b_s\|_{B_{\infty,1}^{-\alpha }}
+\|b_s - b_r\|_{B_{\infty,1}^{-\alpha }} 
\le (M+2) \epsilon, 
\end{align*}
where $M>0$ is such that $\| b^{(n)}_s \|_{B_{\infty,1}^\alpha} \le M \| b_s \|_{B_{\infty,1}^\alpha}$ for all $n\in\N$ and $s\ge 0$. 
This follows by Youngs inequality as $\sum_{i=1}^n \Delta_i a= (\cF^{-1} \rho_{-1})(2^{-n} \cdot) * a$ for some $\rho_{-1}$ such that $\cF^{-1}$ is integrable, and because $\|\cF^{-1} \rho_{-1})(2^{-n} \cdot)\|_{L^1} = \|\cF^{-1} \rho_{-1})\|_{L^1}$ for all $n\in\N$. 
\end{calc}
 
As moreover $\sup_{y \in \R^d} \|\delta_y\|_{B_{1,\infty}^0} \lesssim 1$,
Proposition~\ref{proposition:solution_cauchy_problem_in_cC} yields 
\begin{align*}
\sup_{y \in \R^d} \| \Gamma_{t,T}(\cdot, y) - \Gamma_{t,T}^{(n)}(\cdot, y) \|_{\cC^\beta}\rightarrow 0,
\end{align*}
 for all $\beta < 2-\alpha$.
\end{proof}

\begin{proposition}
\label{proposition:transition_density}
Let $\alpha \in (0,\frac12)$ and $b\in   C([0,T],B_{\infty,1}^{-\alpha}(\R^d,\R^d)) $. For $  t\in [0,T)  $ let $\Gamma_{t,T} \colon \R^d \times \R^d \rightarrow \R$ be defined by $\Gamma_{t,T}(x,y) = u^{\delta_y}(  t  ,x)$. 
Let $\P_{t,x}$ be the unique probability measure on $C([t,T],\R^d)$ such that the coordinate process $X$ is a solution to the SDE \eqref{eqn:diffusion_with_drift_b_in_section} on $[t,T]$ with initial condition $X_t=x$. 
Then $\Gamma_{t,T}(x,\cdot)$ is the density of $X_T$ under $\P_{t,x}$, i.e., $\E_{t,x} [ \phi(X_T)] = \int_{\R^d} \phi(y) \Gamma_{t,T}(x,y) \dd y$ for all $\phi\in C_{\rm c}(\R^d)$. 
\end{proposition}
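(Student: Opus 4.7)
The plan is to reduce to the smooth-drift case via the mollifications $b^{(n)} = \sum_{i=-1}^n \Delta_i b$ from Corollary~\ref{corollary:limit_gamma_n} and then pass to the limit using the continuity tools already at hand. For each $n$, $b^{(n)} \in C([0,T], C_{\rm b}^\infty)$ is bounded with bounded spatial derivatives, so the SDE~\eqref{eqn:diffusion_with_drift_b_in_section} with drift $b^{(n)}$ has a unique strong solution $X^{(n)}$ under some $\P^{(n)}_{t,x}$; by It\^o's formula applied to $u^{\phi,b^{(n)}}$ this $X^{(n)}$ also solves the martingale problem of Definition~\ref{definition:martingale_problem} and so coincides with the solution selected by Theorem~\ref{theorem:martingale_solution}. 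Classical parabolic theory further provides a smooth, positive fundamental solution $q^{(n)}_{t,T}(x,y)$ of $\partial_t + \cL_t^{(n)}$ on $[0,T) \times \R^d$, which is simultaneously the transition density of $X^{(n)}$:
\begin{align*}
\E^{(n)}_{t,x}[\phi(X_T)] = \int_{\R^d} \phi(y)\, q^{(n)}_{t,T}(x,y)\dd y, \qquad \phi \in C_{\rm c}(\R^d).
\end{align*}
By Duhamel's formula together with the Gaussian bounds on $q^{(n)}$ and $\nabla_x q^{(n)}$, $q^{(n)}_{t,T}(\cdot,y)$ is a mild solution of \eqref{eqn:cauchy_problem} with drift $b^{(n)}$ and terminal data $\delta_y \in \cC_1^0$ lying in the uniqueness class of Proposition~\ref{proposition:solution_cauchy_problem_in_cC} (applied with $p=1$, $\gamma=0$, which is admissible since $0 > \alpha - 1$). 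Thus $q^{(n)}_{t,T}(x,y) = u^{\delta_y, b^{(n)}}(t,x) = \Gamma^{(n)}_{t,T}(x,y)$.

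To pass to the limit, recall from the proof of Corollary~\ref{corollary:limit_gamma_n} that $b^{(n)} \to b$ in $C([0,T], B_{\infty,1}^{-\alpha})$. Since $B_{\infty,1}^{-\alpha} \hookrightarrow \cC^{-\alpha}$ continuously, Theorem~\ref{theorem:martingale_solution} together with Remark~\ref{remark:continuity_drift_and_prob_with_other_starting_time} yields weak convergence $\P^{(n)}_{t,x} \to \P_{t,x}$. Hence for every $\phi \in C_{\rm c}(\R^d) \subset C_{\rm b}(\R^d)$,
\begin{align*}
\E^{(n)}_{t,x}[\phi(X_T)] \longrightarrow \E_{t,x}[\phi(X_T)].
\end{align*}
Simultaneously, Corollary~\ref{corollary:limit_gamma_n} gives $\sup_{y \in \R^d} |\Gamma^{(n)}_{t,T}(x,y) - \Gamma_{t,T}(x,y)| \to 0$, and combined with the compact support of $\phi$ this produces
\begin{align*}
\int \phi(y)\, \Gamma^{(n)}_{t,T}(x,y)\dd y \longrightarrow \int \phi(y)\, \Gamma_{t,T}(x,y)\dd y.
\end{align*}
Equating the two limits with the identity from the smooth case establishes~\eqref{eqn:transition_kernel_property}.

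The principal subtlety is the identification $q^{(n)}_{t,T} = u^{\delta_y, b^{(n)}}$ in the smooth case. One must verify that the classical fundamental solution, after the time reversal $t \mapsto T-t$, lies in a space such as $M_T^{\delta}\cC_1^\beta \cap C([0,T], \cC_1^{-\eps})$ for admissible parameters $(\beta,\delta,\eps)$, and that it satisfies the Duhamel fixed-point equation associated with $\Theta$ in~\eqref{eqn:def_Psi}. Both properties follow from the standard Gaussian upper bounds on $q^{(n)}$ and $\nabla_x q^{(n)}$, but the translation into the Besov setting of Proposition~\ref{proposition:solution_cauchy_problem_in_cC} is the only step that requires some care.
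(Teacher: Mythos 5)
Your proposal is correct and takes essentially the same route as the paper: approximate $b$ by the smooth Littlewood--Paley truncations $b^{(n)}$, use the classical smooth-drift result to identify $\Gamma^{(n)}_{t,T}$ as the transition density, and pass to the limit by combining the weak convergence $\P^{(n)}_{t,x}\to\P_{t,x}$ (Theorem~\ref{theorem:martingale_solution} and Remark~\ref{remark:continuity_drift_and_prob_with_other_starting_time}) with the uniform convergence of the kernels from Corollary~\ref{corollary:limit_gamma_n}. The only difference is cosmetic: the paper dispatches the smooth case by citing Friedman's classical theorem, while you additionally sketch the identification of the classical fundamental solution with the mild solution $u^{\delta_y,b^{(n)}}$ via Duhamel and the uniqueness class of Proposition~\ref{proposition:solution_cauchy_problem_in_cC}.
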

\begin{proof}
 
For $b$ with values in $C^\infty_{\rm b}$ this is classical, see for example  \cite[Theorem 6.5.4]{Fr75}. 
So let $b^{(n)}$ and $\Gamma_{t,T}^{(n)}$ be as in Corollary~\ref{corollary:limit_gamma_n} and for $x\in \R^d$ let $\P^{(n)}_{t,x}$
be the unique probability measure on $C([t,T],\R^d)$ such that the coordinate process $X$ is a solution to the martingale problem for $(  (\cL^{(n)}_s)_{s\in (t,T]}  ,\delta_x)$, where $\cL^{(n)}_s = \frac12 \Delta +   b_{T-s}^{(n)}  \cdot \nabla$. 
Using that $\P^{(n)}_{t,x}$ weakly converges to $\P_{t,x}$ (Theorem~\ref{theorem:martingale_solution}) and the uniform convergence in Corollary~\ref{corollary:limit_gamma_n} we obtain for $\phi \in C_{\rm c}(\R^d)$: 
\begin{align*}
\E_{t,x}[\phi(X_T)] = \lim_{n \to \infty} \E_{t,x}^{(n)}[\phi(X_T)] = \lim_{n \to \infty} \int_{\R^d} \phi(y) \Gamma_{t,T}^{(n)}(x,y) \dd y = \int_{\R^d} \phi(y) \Gamma_{t,T}(x,y) \dd y.
\end{align*}
 
\end{proof}

\section{Heat-kernel upper bounds}
\label{section:heat_kernel_upper_bound_for_smooth_drift}

Here we prove the upper bound~\eqref{eqn:heat_kernel_upper_bound} of the heat-kernel estimates. We follow the ``parametrix'' approach from Friedman's book~\cite{Fr64} to prove the heat-kernel estimates presented in Theorem~\ref{theorem:heat_kernel_bound}. This means that we write $\Gamma_t$ as a series (see Lemma~\ref{lemma:decomposition_of_Gamma}) and bound each term   in that series to obtain a bound for the whole series and thus for $\Gamma_t$.  Usually the point of the parametrix is to deal with non-constant diffusion coefficients, but the approach is
still useful for us despite the fact that we deal with constant diffusion coefficients.

Because of Corollary~\ref{corollary:limit_gamma_n} we can restrict our attention to $b$ in   $C([0,T],C_{\rm b}^\infty(\R^d, \R^d))$   and then extend the bounds to $b$ in   $C([0,T], B^{-\alpha}_{\infty,1}(\R^d, \R^d))$   by a limiting argument.

\noindent \textbf{For the rest of this section we fix $\alpha \in (0,\frac12)$,  and $c>1$ as in Theorem~\ref{theorem:heat_kernel_bound} and 
$b\in   C([0,\infty),C_{\rm b}^\infty(\R^d, \R^d))  
$.}   (Instead of $[0,T]$ we consider $[0,\infty)$ for notational convenience.)  
\begin{calc}
Observe that $C_{\rm b}^\infty \subset B_{\infty,1}^{-\alpha}$. 
\end{calc}

\begin{obs}
Let $g\in L^1(\R^d,\R^d)$ and $a\in C_{\rm b}^\infty(\R^d,\R^d)$. 
Let $(\tilde \rho_i)_{i \in \N_{-1}}$ be another dyadic partition of unity, but such that $\supp \tilde \rho_{-1} \cap \supp \rho_i = \emptyset$ for $i\in \N_0$ so that  
  
\begin{align*}
\int_{\R^d} ( \Delta_{i} a)(z) (\tilde \Delta_{-1} g)(z) \dd z 
& 
= \int_{\R^d} \cF^{-1}( \rho_i \hat a)(z) \cF^{-1}( \tilde \rho_{-1}   \hat g)(z) \dd z \\
& = \int_{\R^d} \hat a(-z)  \rho_i(z) \tilde \rho_{-1}(z) \hat g(z) \dd z =0,
\end{align*}
and thus 
\begin{align*}
\int_{\R^d} ( \Delta_{\ge 0} a)(z) g(z) \dd z 
= \int_{\R^d} ( \Delta_{\ge 0} a)(z) (\tilde \Delta_{\ge 0} g)(z) \dd z. 
\end{align*}
 
By duality and Bernstein's inequality, see \cite[Proposition 2.76 and Lemma 2.1]{BaChDa11}, we have 
\begin{align} \notag
   \Big| \int_{\mathbb{R}^d} a (z) \cdot &  g (z)   \dd z \Big| 
   \le      \left| \int_{\mathbb{R}^d} \Delta_{-1} a (z)  \cdot  g (z)   \dd z \right| +  \left| \int_{\mathbb{R}^d} \Delta_{\ge 0} a (z) \cdot  g (z)   \dd z \right| \\
  \notag & \lesssim 
   \| \Delta_{-1} a \|_{L^\infty} \|g\|_{L^1} 
+  \| \Delta_{\ge 0} a \|_{B_{\infty,1}^{-\alpha}} 
\| \tilde \Delta_{\ge 0} g \|_{B_{1,\infty}^{\alpha}} \\
\notag & \lesssim 
   \| \Delta_{-1} a \|_{L^\infty} \|g\|_{L^1} 
+  \| \Delta_{\ge0} a \|_{B^{-\alpha}_{\infty, 1}} \left( \sup_{j \ge 0 } \left\{ \|\tilde \Delta_j g\|_{L^1}^{1-\alpha} (2^{j} \|\tilde \Delta_j g\|_{L^1})^{\alpha} \right\} \right)
 \\
\label{eqn:estimate_b_g_integral_with_B_infty_1_norm} 
  & \lesssim 
   \| \Delta_{-1} a \|_{L^\infty} \|g\|_{L^1} 
+  \| \Delta_{\ge 0} a \|_{B_{\infty,1}^{-\alpha}} \| g \|_{L^1}^{1 - \alpha} \| \nabla g\|_{L^1}^\alpha . 
\end{align}
\end{obs}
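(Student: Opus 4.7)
The plan is to split $a$ into its low-frequency block $\Delta_{-1} a$ and its high-frequency sum $\Delta_{\ge 0} a$, apply the triangle inequality to $\int_{\R^d} a(z) \cdot g(z) \dd z$, and treat the two pieces separately. For the low-frequency piece H\"older's inequality immediately yields $|\int_{\R^d} \Delta_{-1} a \cdot g \dd z| \le \|\Delta_{-1} a\|_{L^\infty}\|g\|_{L^1}$, which accounts for the first term in the asserted bound.

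For the high-frequency piece I would use the standard Besov duality $|\int_{\R^d} f \cdot h \dd z| \lesssim \|f\|_{B_{\infty,1}^{-\alpha}}\|h\|_{B_{1,\infty}^{\alpha}}$ from \cite[Proposition 2.76]{BaChDa11}. Applying it naively to $f=\Delta_{\ge 0} a$ and $h=g$ would charge the full $B_{1,\infty}^{\alpha}$ norm of $g$, including its low-frequency block under some second dyadic partition of unity $(\tilde\rho_i)$. To remove that block I would fix $(\tilde\rho_i)$ so that $\supp\tilde\rho_{-1}\cap\supp\rho_i=\emptyset$ for every $i\ge 0$, which is possible by slightly enlarging the low-frequency cutoff. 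The Fourier-side computation already displayed in the excerpt then gives $\int_{\R^d}\Delta_i a\cdot\tilde\Delta_{-1} g\dd z=0$ for each $i\ge 0$, and summing in $i$ produces the identity $\int_{\R^d}\Delta_{\ge 0} a\cdot g\dd z=\int_{\R^d}\Delta_{\ge 0} a\cdot\tilde\Delta_{\ge 0} g\dd z$. Applying Besov duality to the latter gives the middle line of \eqref{eqn:estimate_b_g_integral_with_B_infty_1_norm}.

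It remains to control $\|\tilde\Delta_{\ge 0} g\|_{B_{1,\infty}^{\alpha}}\simeq\sup_{j\ge 0}2^{j\alpha}\|\tilde\Delta_j g\|_{L^1}$. For each $j\ge 0$, Young's inequality applied to the convolution defining $\tilde\Delta_j$ gives $\|\tilde\Delta_j g\|_{L^1}\lesssim\|g\|_{L^1}$ with a constant uniform in $j$, while Bernstein's inequality \cite[Lemma 2.1]{BaChDa11} combined with the spectral localisation of $\tilde\Delta_j g$ on an annulus of size $2^j$ yields $2^j\|\tilde\Delta_j g\|_{L^1}\lesssim\|\tilde\Delta_j\nabla g\|_{L^1}\lesssim\|\nabla g\|_{L^1}$. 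A pointwise geometric interpolation $2^{j\alpha}\|\tilde\Delta_j g\|_{L^1}=\|\tilde\Delta_j g\|_{L^1}^{1-\alpha}(2^j\|\tilde\Delta_j g\|_{L^1})^{\alpha}$ then produces the bound $\|g\|_{L^1}^{1-\alpha}\|\nabla g\|_{L^1}^{\alpha}$ uniformly in $j\ge 0$, which gives the last line.

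No step is genuinely difficult; the only point requiring some care is the choice of the auxiliary partition $(\tilde\rho_i)$ so that the Fourier-support disjointness trick cleanly kills the $\tilde\Delta_{-1} g$ contribution, and the verification that all implicit constants from Bernstein and Young remain uniform in $j$. Both are routine consequences of the Littlewood-Paley machinery from \cite{BaChDa11}.
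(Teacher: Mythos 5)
Your argument is correct and follows essentially the same route as the paper: the split $a=\Delta_{-1}a+\Delta_{\ge 0}a$ with H\"older for the low-frequency part, the disjoint-support trick with the auxiliary partition $(\tilde\rho_i)$ to discard $\tilde\Delta_{-1}g$, Besov duality for the high-frequency pairing, and the interpolation of Young's and Bernstein's inequalities to bound $2^{j\alpha}\|\tilde\Delta_j g\|_{L^1}$ by $\|g\|_{L^1}^{1-\alpha}\|\nabla g\|_{L^1}^{\alpha}$. No gaps to report.
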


We will apply the above bound for functions $g$ that are Gaussian, therefore we will need estimates for derivatives of Gaussian functions. So we recall the following bound:
\begin{obs}
Let $p(t,x) = (2\pi t)^{-\frac{d}{2}} e^{- \frac{1}{2t} |x|^2  }$ for $(t,x) \in (0,\infty) \times \R^d$ be the standard Gaussian kernel. 
For the space derivatives $\partial^\mu p$ we have the following estimate:
\begin{align} 
\label{eqn:bound_partial_mu_p}
\forall \mu \in \N_0^d 
\ \exists C>0 
\ \forall (t,x) \in (0,\infty) \times \R^d : \quad 
|\partial^{\mu} p (t, x)| \le C t^{- \frac{| \mu |}{2}} p (c t, x),
\end{align}
\end{obs}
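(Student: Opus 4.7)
The plan is to compute $\partial^\mu p$ explicitly up to a polynomial prefactor, and then absorb that polynomial into the slightly flatter Gaussian $p(ct,\cdot)$ at the cost of a constant.

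First I would prove by induction on $|\mu|$ that there is a polynomial $H_\mu$ on $\R^d$ (essentially a multivariate Hermite polynomial) of degree $|\mu|$ such that
\begin{align*}
\partial^\mu p(t,x) = t^{-|\mu|/2}\,H_\mu\!\left(\tfrac{x}{\sqrt t}\right) p(t,x)
\qquad \text{for all } (t,x)\in(0,\infty)\times\R^d.
\end{align*}
The base case $|\mu|=0$ is trivial with $H_0\equiv 1$, and the induction step follows from $\partial_{x_i} p(t,x) = -\frac{x_i}{t}p(t,x)$ together with the scaling identity $\partial_{x_i}[t^{-|\mu|/2}H_\mu(x/\sqrt t)] = t^{-(|\mu|+1)/2}(\partial_i H_\mu)(x/\sqrt t)$.

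Next I would exploit the Gaussian identity
\begin{align*}
\frac{p(t,x)}{p(ct,x)} = c^{d/2}\exp\!\left(-\frac{c-1}{2ct}|x|^2\right),
\end{align*}
so that, writing $y=x/\sqrt t$,
\begin{align*}
|\partial^\mu p(t,x)| = t^{-|\mu|/2} |H_\mu(y)|\,p(t,x)
= t^{-|\mu|/2} c^{d/2} |H_\mu(y)|\,e^{-\frac{c-1}{2c}|y|^2}\,p(ct,x).
\end{align*}
Since $c>1$, the coefficient $\frac{c-1}{2c}>0$, and the polynomial $H_\mu$ is dominated by any strictly positive Gaussian: there exists $C_\mu>0$, depending only on $\mu$, $d$ and $c$, such that $|H_\mu(y)|\,e^{-\frac{c-1}{2c}|y|^2}\le C_\mu/c^{d/2}$ uniformly in $y\in\R^d$. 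Combining these two displays yields \eqref{eqn:bound_partial_mu_p} with $C=C_\mu$.

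There is no real obstacle here; the only mild subtlety is keeping track of the constant's dependence on $c$ (which is harmless, since $c$ is fixed throughout this section) and on $|\mu|$ through the degree of $H_\mu$. In the rest of the paper only $|\mu|\le 1$ is ever used, so the explicit formulas $\partial_{x_i} p(t,x) = -t^{-1}x_i\,p(t,x)$ together with $|x_i|e^{-\frac{c-1}{2c}|x|^2/t}\le C\sqrt t$ even give the result by a one-line computation.
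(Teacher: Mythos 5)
Your argument is correct and is essentially the paper's own proof: an induction giving $\partial^\mu p(t,x)=t^{-|\mu|/2}H_\mu(x/\sqrt t)\,p(t,x)$ with $H_\mu$ a polynomial of degree at most $|\mu|$, followed by absorbing the polynomial into the flatter Gaussian via $p(t,x)/p(ct,x)=c^{d/2}e^{-\frac{c-1}{2ct}|x|^2}$. Only your closing aside is slightly off — Section 3 actually invokes the bound for $|\mu|\le 2$ (and, through $\partial^\nu g_\mu$ in Lemma~\ref{lemma:bound_cIs}, effectively third-order derivatives), so the general statement is genuinely needed — but your main argument covers all $\mu$ anyway.
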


\begin{calc}
Indeed, when $\mu$ has a $0$ for the $t$ derivatives, then $\partial^\mu = t^{-\frac{|\mu|}{2}} P_\mu(\frac{x}{\sqrt{t}}) p(t,x)$ for some polynomial $P_\mu$ of order $\le |\mu|$. This can be proved by induction, for the induction step note that $\partial_{x_i} P_\mu(\frac{x}{\sqrt{t}})$ equals the product of another polynomial with  $t^{-\frac12}$. 
Note that one can bound $P(a) e^{-a}$ by $C e^{-ca}$ for all $c<1$, where $C$ depends on $c$. 
\end{calc}

The proof of the upper bound~\eqref{eqn:heat_kernel_upper_bound} essentially follows by iterating the previous two observations. To carry out the argument we need the following result, which allows us to write $\Gamma$ as an infinite series.


\begin{lemma}
\label{lemma:decomposition_of_Gamma}
Let $t>0$ and $y\in\R^d$. 
For $s\in [0,t)$ and $x \in \R^d$ we define 
\begin{align} 
\label{eqn:Psi_1}
\Psi^{y,1}_{s,t} ( x ) & =  - b (  t-s,   x) \cdot \nabla p (  s  , x - y). 
\end{align}
Then for all $k\in \N$ the map $s\mapsto \Psi^{y,k}_{s,t}$ is in $L^1([0,t), L^1(\R^d))$, where 
\begin{align}
\label{eqn:definition_Psi_k+1}
 \Psi^{y,k+1}_{s,t} ( x ) & = - \int_0^{s} \int_{\mathbb{R}^d} b (  t-s,   x) \cdot 
   \nabla p (  s-r  , x - z) \Psi^{y,k}_{r,t} ( z ) \dd z \dd r.
\end{align}
Moreover, (with $\Gamma_{s ,t}$ as in Proposition~\ref{proposition:transition_density})
\begin{align} 
\label{eqn:Gamma_in_series}
\Gamma_{s,t} (x , y) = 
p (t-s, x - y) + \sum_{k = 1}^{\infty} \int_0^{t-s}  \int_{\mathbb{R}^d} p (   t-s-r   ,  x - z) \Psi^{y,k}_{r,t}( z ) \dd z \dd r.
\end{align}
\end{lemma}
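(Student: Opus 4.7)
The plan is to recognize \eqref{eqn:Gamma_in_series} as the classical parametrix expansion for $\Gamma$ around the free Gaussian kernel. Since $b \in C([0,\infty), C_{\rm b}^\infty)$, the density $\Gamma_{s,t}$ is a classical smooth fundamental solution of $\partial_s u + \cL_s u = 0$. Putting $\tau = t-s$ and $v(\tau,x) = \Gamma_{t-\tau,t}(x,y)$, this turns into the forward problem $\partial_\tau v = \tfrac{1}{2}\Delta v + b(t-\tau,\cdot) \cdot \nabla v$ with $v(0,\cdot) = \delta_y$, whose Duhamel form reads
\begin{equation*}
v(\tau,x) = p(\tau, x-y) + \int_0^\tau \int_{\R^d} p(\tau-r, x-z)\, b(t-r, z) \cdot \nabla_z v(r,z)\, \dd z\, \dd r.
\end{equation*}
Iterating this identity (substituting the expression for $v$ inside the gradient at each step, optionally integrating by parts via $\nabla_z p(\tau-r, x-z) = -\nabla_x p(\tau-r, x-z)$ to move derivatives onto $p$) produces a series whose successive terms are exactly the kernels $\Psi^{y,k}$ defined by the recursion \eqref{eqn:definition_Psi_k+1}. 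The remainder of the plan is to make this formal calculation rigorous in two stages.

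For the integrability I argue by induction on $k$, using the Gaussian gradient estimate $|\nabla p(r,\cdot)| \le C r^{-1/2} p(cr, \cdot)$ from \eqref{eqn:bound_partial_mu_p}, the boundedness of $b$, and the semigroup identity $\int p(r_1, x-z) p(r_2, z-y)\, \dd z = p(r_1+r_2, x-y)$. The base case yields $\|\Psi^{y,1}_{s,t}\|_{L^1(\R^d)} \lesssim \|b\|_{C_t L^\infty}\, s^{-1/2}$, which is integrable on $[0,t)$. The inductive step telescopes every spatial convolution into a single Gaussian and leaves a Beta-integral in time, $\int_0^s (s-r)^{-1/2}\, r^{k/2-1}\, \dd r = B(\tfrac{1}{2}, \tfrac{k}{2})\, s^{(k-1)/2}$, which gives the factorial-type bound
\begin{equation*}
\|\Psi^{y,k}_{s,t}\|_{L^1(\R^d)} \le \frac{\bigl(C \|b\|_{C_t L^\infty}\, \sqrt{\pi}\bigr)^{k}}{\Gamma(k/2)}\, s^{k/2 - 1}.
\end{equation*}
In particular $s \mapsto \Psi^{y,k}_{s,t}$ lies in $L^1([0,t), L^1(\R^d))$ and the series $\sum_k \Psi^{y,k}$ converges absolutely there, uniformly for $s$ in compact subsets of $[0,t)$.

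For the series identity itself I prefer to sidestep the bookkeeping of an $N$-fold Duhamel remainder: define $H(\tau,x)$ to be the right-hand side of \eqref{eqn:Gamma_in_series} (expressed in the variable $\tau = t-s$), and verify (i) $H(\tau,\cdot) \to \delta_y$ in $\cS'$ as $\tau \downarrow 0$, and (ii) $H$ solves the same mild equation as $v$. Claim (i) follows from the estimates of the preceding paragraph, which give that the integral contribution to $H$ is $O(\tau^{1/2})$ in a suitable distributional sense. Claim (ii) is a direct computation: substituting the recursion \eqref{eqn:definition_Psi_k+1} into the series and collapsing one level of summation transforms $\sum_k \Psi^{y,k}$ into precisely the source term $b(t-\tau,\cdot) \cdot \nabla v(\tau,\cdot)$ of the Duhamel equation. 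Uniqueness of mild solutions for smooth bounded $b$ (standard via Gronwall applied to the Duhamel fixed-point equation) then forces $H = v$, which is \eqref{eqn:Gamma_in_series}.

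The main technical obstacle is the careful sign and convention bookkeeping when matching the iterated Duhamel expansion to the recursive definition of $\Psi^{y,k}$, together with the justification of the interchanges of summation and integration in both claims (i) and (ii); both are handled cleanly by the factorial decay established in the induction, which delivers absolute convergence and validates all Fubini-type manipulations.
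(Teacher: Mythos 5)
Your overall strategy coincides with the paper's: the $L^1([0,t),L^1(\R^d))$ statement is proved by essentially the same induction (Gaussian gradient bound $\|\nabla p(r,\cdot)\|_{L^1}\lesssim r^{-1/2}$, Young's inequality, beta-function integrals in time), and your factorial bound $\|\Psi^{y,k}_{s,t}\|_{L^1}\lesssim (C\|b\|_{C_tL^\infty}\sqrt{\pi})^k\, s^{k/2-1}/\Gamma(k/2)$ is correct and in fact sharper than the crude bound the paper records; the series identity is in both cases read off from the mild formulation of the Cauchy problem with terminal datum $\delta_y$. The only packaging difference is that the paper identifies the $k$-th partial sum with the $k$-th Picard iterate of the map $\Phi$ in \eqref{eqn:map_for_cauchy_fixed} and uses that the iterates converge to the fixed point $u^{\delta_y}(s,\cdot)=\Gamma_{s,t}(\cdot,y)$, whereas you verify that the full series satisfies the fixed-point equation and invoke uniqueness of mild solutions; these are interchangeable, and the uniqueness you need is available from the contraction estimates behind Proposition~\ref{proposition:solution_cauchy_problem_in_cC} (or your Gronwall argument), so nothing essential is missing on that side.

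The one point you must repair is the sign bookkeeping, which you defer to ``convergence'' but which is purely algebraic and is exactly where your claim (ii) would fail as written. The kernels $\Psi^{y,k}$ carry a factor $(-1)^k$: iterating the plus-sign Duhamel identity you wrote down produces the unsigned compositions of $b\cdot\nabla p$, i.e.\ $(-1)^k\Psi^{y,k}$, not $\Psi^{y,k}$. Concretely, if $H(\tau,\cdot)$ denotes the right-hand side of \eqref{eqn:Gamma_in_series} (with $\tau=t-s$), then differentiating the series term by term and using \eqref{eqn:Psi_1}--\eqref{eqn:definition_Psi_k+1} gives $b(t-r,\cdot)\cdot\nabla H(r,\cdot)=-\sum_{k\ge1}\Psi^{y,k}_{r,t}$, so $H$ satisfies $H_\tau=P_\tau\delta_y-\int_0^\tau P_{\tau-r}\big(b_{t-r}\cdot\nabla H_r\big)\dd r$, which is the time-reversed form of \eqref{eqn:map_for_cauchy_fixed}, not the plus-sign equation you propose to verify; with your equation the verification does not close (one would get $H=2p-H$-type mismatches in the odd terms). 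Since in this paper $\Gamma_{s,t}(x,y)=u^{\delta_y}(s,x)$ is precisely the fixed point of \eqref{eqn:map_for_cauchy_fixed} (this is how Proposition~\ref{proposition:transition_density} defines it), the correct way to finish your argument is to verify the series against that equation, with its sign convention, and apply your uniqueness step there; then your ``collapse one level of summation'' computation closes and yields \eqref{eqn:Gamma_in_series} exactly as stated. A last small point: justifying $\nabla H=\sum_k\nabla(\cdots)$ needs the gradient estimate \eqref{eqn:bound_partial_mu_p} applied to $p(\tau-r,\cdot)$, giving an integrable $(\tau-r)^{-1/2}$ singularity; the factorial decay of the $L^1$ norms alone does not control this interchange.
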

\begin{proof}
By~\eqref{eqn:bound_partial_mu_p} we know that $\|\Psi^{y,1}_{s,t} \|_{L^1(\R^d)} \lesssim \| \nabla p(s,\cdot)\|_{L^1(\R^d)} \lesssim  s^{-\frac{1}{2}}  $ and therefore $s\mapsto \Psi^{y,1}_{s,t}$ is in $L^1([0,t), L^1(\R^d))$. 
Observe that $\Psi_{s,t}^{y,k+1}$ equals the inner product of $-b(t-s,x)$ with a convolution in space and time. Therefore, by applying the $L^1$ inequality for convolutions (Young's inequality) for the space as well for the time convolution, we obtain 
\begin{align*}
\|\Psi^{y,k+1} _{s,t}\|_{L^1(\R^d)}  
& \lesssim \int_0^{s} \| \nabla p (s-r, \cdot)\ast \Psi^{y,k}_{r,t}\|_{L^1(\R^d)} \dd r \\
   & \lesssim \int_0^{s} (s-r)^{-\frac{1}{2}} \dd r \int_0^s \| \Psi^{y,k}_{r,t}\|_{L^1(\R^d)}  \dd r 
   \lesssim s^{\frac12} \int_0^t \| \Psi^{y,k}_{r,t}\|_{L^1(\R^d)}  \dd r,
\end{align*}
from which we conclude that $\int_0^t \| \Psi^{y,k}_{r,t}\|_{L^1(\R^d)}  \dd r$ is finite  (actually it is $\lesssim t^{\frac{1+3k}{2}}$) for all $k\in\N$.

It remains to show~\eqref{eqn:Gamma_in_series}. 
As $\Gamma_{s,t}(x,y) = u^{\delta_y}(s,x)$ where $u^{\delta_y}$ being the fixed point of the map $\Phi$ as in \eqref{eqn:map_for_cauchy_fixed} with $\phi = \delta_y$, that is, with $u = u^{\delta_y}$,
\begin{align*}
(\Phi u)_s 
&  =  P_{t-s} \delta_y  -  \int_s^t P_{q-s} (  b_{q}   
 \cdot \nabla u_q) \dd q \\
& =  P_{t-s} \delta_y  -  \int_0^{t-s} P_{t-s-r} (  b_{t-r}   
 \cdot \nabla u_{t-r}) \dd r.
\end{align*}
From a Picard iteration it follows that $\Gamma$ is the limit of the sequence $\Gamma_t^0 = 0$,
\begin{align*} 
& \Gamma^{k+1}_{s,t}(x,y) \\
& = p(t-s,x-y) - \int_0^{t-s} \int_{\R^d} p(t-s-r,x-z) (b(t-r, z) \cdot \nabla_z \Gamma^k_{t-r,t}(z,y)) \dd z \dd r . 
\end{align*}
Therefore,   $\Gamma^1_{s,t}(x,y) = p(t-s,x-y)$   and we obtain recursively 
\begin{calc}
\begin{align*}
\Gamma_{s,t}^2(x,y) 
& = p(t-s,x-y) - \int_0^{t-s} \int_{\R^d} p(t-s-r, x-z) b(t-r,z) \cdot \nabla p(r, z-y) \dd z \dd r \\
& = p(t-s,x-y) + \int_0^{t-s} \int_{\R^d} p(t-s-r, x-z) \Psi^{y,1}_{r,t}(z) \dd z \dd r \\
\Gamma_{s,t}^3(x,y) 
& = p(t-s,x-y) + \int_0^{t-s} \int_{\R^d} p(t-s-r, x-z) \Psi^{y,1}_{r,t}(z) \dd z \dd r \\
& \quad 
- \int_0^{t-s} \int_{\R^d} p(t-s-r, x-z)   b(t-r,z) \cdot \nabla 
\left( \int_0^{r} \int_{\R^d} p(r-q, z-w) \Psi^{y,1}_{q,t}(w) \dd w \dd q \right) \dd z \dd r \\
& = p(t-s,x-y) + \int_0^{t-s} \int_{\R^d} p(t-s-r, x-z) \Psi^{y,1}_{r,t}(z) \dd z \dd r \\
& \quad 
+ \int_0^{t-s} \int_{\R^d} p(t-s-r, x-z)   \Psi^{y,2}_{r,t}(z)  \dd z \dd r
\end{align*}
\end{calc}
(see also \cite[Chapter~1.4]{Fr64})
 
\begin{align*}
 \Gamma^{k+1}_{s,t}(x,y) 
& = p(t-s,x-y) +   \sum_{\ell=1}^k \int_0^{t-s} \int_{\R^d} p(t-s-r,x-z) \Psi^{y,\ell}_{r,t}(z) \dd z \dd r. 
\end{align*}
 
This proves~\eqref{eqn:Gamma_in_series}.
\end{proof}

\begin{obs}
\label{obs:restricting_to_one_parameter}
Now let us get back to Remark~\ref{remark:inhomogeneous}. 
Observe that in the right-hand side in \eqref{eqn:Gamma_in_series} the dependence on $t$ is in the $\Psi^{y,k}$ functions, and we see that the rest is a function of $t-s$. 
This allows us to take the first time variable, $s$, equal to zero, and proof the heat-kernel bounds as in Theorem~\ref{theorem:heat_kernel_bound}.  From now on we write ``$\Gamma_t$'' for ``$\Gamma_{0,t}$''. 
\end{obs}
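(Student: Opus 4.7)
This observation is a bookkeeping remark whose content is to reduce the two-parameter heat-kernel bounds for $\Gamma_{s,t}$ (cf.\ Remark~\ref{remark:inhomogeneous}) to the one-parameter bounds stated in Theorem~\ref{theorem:heat_kernel_bound}. The plan is to inspect the series~\eqref{eqn:Gamma_in_series} and exhibit a time-shift of the drift that makes the reduction rigorous.

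First, I would inspect the right-hand side of~\eqref{eqn:Gamma_in_series}: the leading term $p(t-s,x-y)$, the limits of integration $[0,t-s]$, and the convolution weight $p(t-s-r,x-z)$ depend on $(s,t)$ only through $\tau := t-s$, so any $s$-dependence beyond $\tau$ is confined to the kernels $\Psi^{y,k}_{r,t}$. Next, I would introduce the shifted drift $\tilde b_\rho := b_{\rho+s}$ for $\rho\ge 0$ and write $\tilde\Psi^{y,k}_{r,\tau}$ and $\tilde\Gamma_{0,\tau}$ for the objects obtained from~\eqref{eqn:Psi_1}--\eqref{eqn:Gamma_in_series} when $b$ and $t$ are replaced by $\tilde b$ and $\tau$. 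An induction on $k\in\N$ would then establish
\begin{equation}\label{eqn:plan_psi_shift_identity}
\tilde\Psi^{y,k}_{r,\tau}(x) = \Psi^{y,k}_{r,t}(x), \qquad r\in[0,\tau],\ x\in\R^d.
\end{equation}
The base case $k=1$ follows from $\tilde b(\tau-r,\cdot) = b(\tau+s-r,\cdot) = b(t-r,\cdot)$; the induction step uses the same identity inside the convolution defining $\Psi^{y,k+1}$ in~\eqref{eqn:definition_Psi_k+1} together with the hypothesis $\tilde\Psi^{y,k}_{q,\tau} = \Psi^{y,k}_{q,t}$.

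Substituting~\eqref{eqn:plan_psi_shift_identity} into~\eqref{eqn:Gamma_in_series} then yields $\Gamma_{s,t}(x,y) = \tilde\Gamma_{0,\tau}(x,y)$, so the desired bounds on $\Gamma_{s,t}$ are exactly the $s=0$ bounds of Theorem~\ref{theorem:heat_kernel_bound} applied to the shifted drift $\tilde b$ on $[0,\tau]$. For the norms appearing in~\eqref{eqn:heat_kernel_upper_bound}--\eqref{eqn:heat_kernel_lower_bound}, one has the trivial estimate $\|\tilde b\|_{C_\tau \fX} = \sup_{\rho\in[s,t]}\|b_\rho\|_{\fX} \le \|b\|_{C_t \fX}$ for every Banach space $\fX$, so all quantitative constants remain under control in terms of the original drift. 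There is no real obstacle: the content of the observation is the time-shift invariance of the recursion~\eqref{eqn:definition_Psi_k+1}, which propagates through the series~\eqref{eqn:Gamma_in_series} by the straightforward induction above, and justifies writing $\Gamma_t$ for $\Gamma_{0,t}$ throughout the remainder of the paper.
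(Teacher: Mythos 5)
Your proposal is correct and matches the paper's intent exactly: the observation carries no separate proof in the paper, and the intended justification is precisely the time-shift $\tilde b_\rho = b_{\rho+s}$ already announced in Remark~\ref{remark:inhomogeneous}, combined with the fact that the shift propagates through the recursion~\eqref{eqn:definition_Psi_k+1} and hence through the series~\eqref{eqn:Gamma_in_series}. Your induction establishing $\tilde\Psi^{y,k}_{r,\tau}=\Psi^{y,k}_{r,t}$ and the norm comparison $\|\tilde b\|_{C_\tau\fX}\le\|b\|_{C_t\fX}$ are exactly the details the paper leaves implicit.
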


Note that the first term appearing in the right-hand side of \eqref{eqn:Gamma_in_series} is already bounded by the right-hand side of \eqref{eqn:heat_kernel_upper_bound}. 
Therefore, we will recursively estimate
\begin{align*}
\int_0^t \int_{\mathbb{R}^d} p (t - s, x - z) \Psi^{y,k}_{s,t}(z) \dd z \dd s. 
\end{align*}
This will be done with the help of some auxiliary lemmas, which follow below. 

\begin{obs}
\label{obs:notations_P_mu_and_cP}
Let $\mu \in \N_0^d$, $t>0$, $k\in \N$, $x,y\in \R^d$ and $g\in L^1(\R^d)$. 
As we write $P_t g = p(t,\cdot) \ast g$ (see \eqref{eqn:map_for_cauchy_fixed}), we have $\partial^\mu P_t g= \partial^{\mu} p (t, \cdot) \ast g$. 

For any given norm $\|\cdot\|$ we will write $\|\nabla f\| = \sum_{i=1}^d \|\partial_i f\|$ and $\|\nabla^2 f\| = \sum_{i,j=1}^d \|\partial_{i}\partial_j f\|$. 
\end{obs}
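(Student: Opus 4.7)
The observation combines a notational convention with a single, essentially trivial identity, so the proof plan is very short. I read the substantive content as the claim that for fixed $t>0$, $g \in L^1(\R^d)$, and $\mu \in \N_0^d$,
\begin{equation*}
\partial^\mu (p(t,\cdot) \ast g) \;=\; (\partial^\mu p(t,\cdot)) \ast g,
\end{equation*}
while the remaining content --- the abbreviations $\|\nabla f\| = \sum_{i=1}^d \|\partial_i f\|$ and $\|\nabla^2 f\| = \sum_{i,j=1}^d \|\partial_i \partial_j f\|$ for an arbitrary norm $\|\cdot\|$ --- is purely definitional and requires no argument.

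My proof of the identity would proceed by the standard differentiation-under-the-integral-sign argument, justified by dominated convergence. For fixed $t>0$ the kernel $p(t,\cdot)$ is a Schwartz function, so every partial derivative $\partial^\mu p(t,\cdot)$ is continuous, bounded, and integrable. The pointwise bound \eqref{eqn:bound_partial_mu_p} provides an explicit Gaussian dominator of the form $C_t\, p(ct, x-\cdot)$ for the difference quotients of $p(t, x-\cdot)$ uniformly for $x$ in any compact set. Since $g \in L^1(\R^d)$, the product of this dominator with $|g|$ is integrable, which allows one to exchange a single partial derivative $\partial_{x_i}$ with the convolution integral. Iterating this interchange $|\mu|$ times yields the claimed identity.

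I do not foresee any obstacle: the argument is simply the textbook justification for differentiating a Gaussian convolution. The only role of the observation in the paper seems to be fixing notation for the recursive parametrix estimates on $\int_0^t \int_{\R^d} p(t-s, x-z)\, \Psi^{y,k}_{s,t}(z)\,\dd z\,\dd s$ that will follow, where $\partial^\mu$ will have to be pulled inside convolutions against terms built from $p$ and $b$.
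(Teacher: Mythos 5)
Your proof is correct; the interchange of $\partial^\mu$ with the convolution against the Schwartz kernel $p(t,\cdot)$ for $g\in L^1$ is exactly the standard fact the paper relies on, and the paper itself states this observation without proof, treating it (and the notational conventions for $\|\nabla f\|$, $\|\nabla^2 f\|$) as immediate. Nothing is missing: dominated convergence with the Gaussian dominator from \eqref{eqn:bound_partial_mu_p}, iterated over the $|\mu|$ derivatives, is the intended justification.
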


\begin{lemma}
\label{lemma:bound_for_partial_mu_P_g_r}
There exists a $C > 0$ (independent of $b$) such that for all $\mu\in \N_0^d$ with $| \mu | \le 2$,  $y \in \mathbb{R}^d$ and $t,s,r\in (0,\infty)$ with $t>s>r$ and all $f\in L^1(\R^d)$, with   $g_{t,s,r}(z) = b(  t-s,   z) \cdot \int_{\R^d}  \nabla p(s-r,  z-w) f(w) \dd w$ 
   \begin{align}
  \notag 
 &    | \partial^\mu P_{t-s} g_{t,s,r} (x) | 
     \le C  (t - s)^{- \frac{|\mu|}{2}} p (c t, x - y) \Big( \| \Delta_{-1}   b_{t-s}   \|_{L^\infty}   \left\| \tfrac{ \nabla P_{s-r}f }{p (c s, \cdot -     y)} \right\|_{L^{\infty}}  \\
 \label{eqn:bound_cP_mu_g}
& + 
\| \Delta_{\ge 0}   b_{t-s}    \|_{B^{-  \alpha}_{\infty, 1}} 
 \Big[ (t -    s)^{- \frac{\alpha}{2}}  \left\| \tfrac{ \nabla P_{s-r}f }{p (c s, \cdot -     y)} \right\|_{L^{\infty}} + 
  \left\| \tfrac{ \nabla P_{s-r}f }{p (c s, \cdot -     y)} \right\|_{L^{\infty}}^{1 - \alpha} 
 \left\| \tfrac{ \nabla^2 P_{s-r}f }{p (c s, \cdot - y)} \right\|_{L^{\infty}}^{\alpha} \Big] \Big) . 
  \end{align}
\end{lemma}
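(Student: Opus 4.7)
The plan is to rewrite $g_{t,s,r}(z) = b(t-s,z) \cdot \nabla P_{s-r} f(z)$, so that
\[
\partial^\mu P_{t-s} g_{t,s,r}(x) = \int_{\R^d} \partial^\mu p(t-s, x-z)\, b(t-s, z) \cdot \nabla P_{s-r} f(z) \, \dd z.
\]
This integral fits the template of \eqref{eqn:estimate_b_g_integral_with_B_infty_1_norm} with $a(z) := b(t-s, z)$ and vector-valued test function $h(z) := \partial^\mu p(t-s, x-z)\, \nabla P_{s-r} f(z)$, yielding
\[
|\partial^\mu P_{t-s} g_{t,s,r}(x)| \lesssim \|\Delta_{-1} b(t-s)\|_{L^\infty} \|h\|_{L^1} + \|\Delta_{\ge 0} b(t-s)\|_{B^{-\alpha}_{\infty,1}} \|h\|_{L^1}^{1-\alpha} \|\nabla h\|_{L^1}^{\alpha}.
\]

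Next I bound $\|h\|_{L^1}$ by pulling out the pointwise ratio: for every $z$,
\[
|\nabla P_{s-r} f(z)| \le \Big\| \tfrac{\nabla P_{s-r} f}{p(cs, \cdot - y)} \Big\|_{L^\infty}\, p(cs, z - y),
\]
and applying \eqref{eqn:bound_partial_mu_p} to $\partial^\mu p(t-s, x-z)$. The key ingredient is the Gaussian semigroup identity $\int p(c(t-s), x-z)\, p(cs, z-y)\, \dd z = p(ct, x-y)$, giving
\[
\|h\|_{L^1} \lesssim (t-s)^{-|\mu|/2}\, p(ct, x-y) \Big\| \tfrac{\nabla P_{s-r} f}{p(cs, \cdot - y)} \Big\|_{L^\infty}.
\]
For $\|\nabla h\|_{L^1}$ the product rule gives two contributions: one carrying a derivative onto $\partial^\mu p$ (producing an extra $(t-s)^{-1/2}$ via \eqref{eqn:bound_partial_mu_p} applied to $\partial^{\mu+e_i} p$) and one with a derivative on $\nabla P_{s-r} f$ (producing the $\nabla^2 P_{s-r} f$ ratio). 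Both are estimated by the same Gaussian convolution trick, yielding
\[
\|\nabla h\|_{L^1} \lesssim (t-s)^{-|\mu|/2}\, p(ct, x-y) \left[ (t-s)^{-1/2} \Big\| \tfrac{\nabla P_{s-r} f}{p(cs, \cdot - y)} \Big\|_{L^\infty} + \Big\| \tfrac{\nabla^2 P_{s-r} f}{p(cs, \cdot - y)} \Big\|_{L^\infty} \right].
\]

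Substituting these bounds back into the duality estimate and using the subadditivity $(A+B)^\alpha \le A^\alpha + B^\alpha$ for $\alpha \in (0,1)$ splits the factor $\|\nabla h\|_{L^1}^\alpha$ into the two pieces appearing in \eqref{eqn:bound_cP_mu_g}, after observing $p(ct,x-y)^{1-\alpha}\cdot p(ct,x-y)^{\alpha} = p(ct,x-y)$ and $(t-s)^{-|\mu|(1-\alpha)/2}\cdot(t-s)^{-|\mu|\alpha/2} = (t-s)^{-|\mu|/2}$. There is no conceptual obstacle; the only thing to watch is the bookkeeping of the $(t-s)^{-\delta}$ exponents, in particular that the extra $(t-s)^{-1/2}$ in $\|\nabla h\|_{L^1}$ becomes the prefactor $(t-s)^{-\alpha/2}$ in the final expression after being raised to the power $\alpha$.
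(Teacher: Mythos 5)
Your proposal is correct and follows essentially the same route as the paper's own proof: rewrite $g_{t,s,r}= b(t-s,\cdot)\cdot\nabla P_{s-r}f$, apply the duality bound \eqref{eqn:estimate_b_g_integral_with_B_infty_1_norm} with $h(z)=\partial^\mu p(t-s,x-z)\,\nabla P_{s-r}f(z)$, estimate $\|h\|_{L^1}$ and $\|\nabla h\|_{L^1}$ via \eqref{eqn:bound_partial_mu_p} and the Gaussian convolution identity, and finish with $(A+B)^\alpha\le A^\alpha+B^\alpha$. The exponent bookkeeping you flag (the extra $(t-s)^{-1/2}$ turning into $(t-s)^{-\alpha/2}$ after raising to the power $\alpha$) is handled identically in the paper, so nothing is missing.
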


\begin{proof}
We abbreviate $g_{t,s,r}$ by $g$. 
Observe that $g(z)=  b(  t-s,   z) \cdot \nabla  P_{s-r} f(z)$. 
Then, with $h: \R^d \rightarrow \R^d$,  $h (z)= \partial^\mu p(t-s, x-z) \nabla P_{s-r} f(z)$,  
by \eqref{eqn:estimate_b_g_integral_with_B_infty_1_norm} 
    \begin{align*}
     | \partial^\mu P_{t-s} g (x) | & = \left| \int_{\R^d} \partial^\mu p(t-s,x-z)  b(  t-s,   z) \cdot \nabla P_{s-r} f(z) \dd z \right| \\
     & \lesssim \| \Delta_{-1}   b_{t-s}   \|_{L^\infty} \|h\|_{L^1}
     +  \| \Delta_{\ge 0}   b_{t-s}   \|_{B^{-\alpha}_{\infty, 1}} 
 \|h\|_{L^1}^{1 - \alpha} \| \nabla h \|_{L^1}^{\alpha} .
  \end{align*}

We estimate both $\|h\|_{L^1}$ and $\|\nabla h\|_{L^1}$. 
We use \eqref{eqn:bound_partial_mu_p} and  $\int_{\R^d} p( c(t-s), x-z) p(cs, z-y) \dd z = p( c (t-s) , \cdot) * p ( cs, \cdot) (x-y)  = p( ct , x-y)$ to obtain   
  \begin{align*}
 \|h\|_{L^1}
    & =    \int_{\mathbb{R}^d} \left| \partial^{\mu} p (t - s, x - z) \nabla P_{s-r}f(z) 
    \right| \dd z\\
&      \lesssim \int_{\mathbb{R}^d} (t - s)^{- \frac{|\mu|}{2}} p (c (t - s), x - z) 
    p (c s, z - y)  \left\| \frac{\nabla P_{s-r}f}{p (c s, \cdot -
    y)} \right\|_{L^{\infty}} \dd z\\
&      = (t - s)^{- \frac{|\mu|}{2}} p (c t, x - y) \left\|  \frac{\nabla P_{s-r}f}{p (c s, \cdot - y)} \right\|_{L^{\infty}} .
  \end{align*}
Similarly, in combination with Leibniz's rule, we obtain
  \begin{align*}
& \| \nabla h\|_{L^1} = \big\| \nabla \big(
    \partial^{\mu} p (t - s, x - \cdot) \nabla P_{s-r}f \big) \big\|_{L^1} \\
& 
    \le \sum_{i=1}^d \big\|  \partial^{ \mu} \partial_i  p (t - s, x - \cdot) \nabla P_{s-r}f
     \big\|_{L^1}     
    +   \big\| \partial^{\mu} p (t - s, x - \cdot)
   \nabla^2 P_{s-r} f \big\|_{L^1} \\
&     \lesssim (t - s)^{- \frac{|\mu|}{2}} p (c t, x - y)
 \left[ (t - s)^{- \frac{1}{2}} 
    \left\| \frac{ \nabla P_{s-r}f }{p (c s, \cdot - y)} \right\|_{L^{\infty}} 
      +  \left\|  \frac{\nabla^2 P_{s-r}f }{p (c s, \cdot - y)} \right\|_{L^{\infty}} \right].
  \end{align*}
Using the above and that $(a+b)^\alpha \le a^\alpha + b^\alpha$ for $a,b\ge 0$ 
\begin{calc}
(indeed, by dividing by $b^\alpha$ we may instead assume $b=1$; then, by a simple calculation we see that $(x+1)^\alpha \le x^\alpha +1$: $\partial_x x^\alpha +1 - (x+1)^\alpha = \alpha ( x^{\alpha-1} - (x+1)^{\alpha -1}) \ge 0$ for $x\ge 0$)
\end{calc}
we obtain \eqref{eqn:bound_cP_mu_g}. 
\begin{calc}
\begin{align*}
\|h\|_{L^1}^{1-\alpha} \| \nabla h\|_{L^1}^\alpha
& \lesssim 
(t - s)^{- \frac{|\mu|}{2}} p (c t, x - y) \left\| \frac{ \nabla P_{s-r}f }{p (c s, \cdot - y)} \right\|_{L^{\infty}}^{1-\alpha} \\
& \qquad \times 
 \left[ (t - s)^{- \frac{1}{2}}  \left\| \frac{ \nabla P_{s-r}f }{p (c s, \cdot - y)} \right\|_{L^{\infty}} 
      +  \left\|  \frac{ \nabla^2 P_{s-r}f }{p (c s, \cdot - y)} \right\|_{L^{\infty}} \right]^\alpha \\
& \lesssim 
(t - s)^{- \frac{|\mu|}{2}} p (c t, x - y) \left\| \frac{ \nabla P_{s-r}f }{p (c s, \cdot - y)} \right\|_{L^{\infty}}^{1-\alpha} \\
& \qquad \times 
 \left[ (t - s)^{- \frac{\alpha}{2}}  \left\| \frac{ \nabla P_{s-r}f }{p (c s, \cdot - y)} \right\|_{L^{\infty}}^ \alpha
      +  \left\|  \frac{ \nabla^2 P_{s-r}f }{p (c s, \cdot - y)} \right\|_{L^{\infty}}^\alpha \right].
\end{align*}
\end{calc}
\end{proof}

\begin{obs}
Now we apply the above lemma to our setting. But first, let us introduce some notation. For $k \in \N$,  $t \ge 0$, $i \in \{0,1\}$, and $\beta \in \{0,\alpha\}$ we write 
\begin{align*}
\cI_{i,k}^\beta(t) = \sup_{y \in \R^d} \int_0^t 
 \left\| \frac{ \nabla^i P_{t-s}[ \Psi^{y,k}_{s,t}]}{p(ct,\cdot -y)} \right\|_{L^\infty}^{1-\beta}  \left\| \frac{ \nabla^{i+1} P_{t-s}[ \Psi^{y,k}_{s,t}]}{p(ct,\cdot -y)} \right\|_{L^\infty}^{\beta} \dd s.
\end{align*}
We are interested in the bounds for $\cI_{i,k}^0$ only. 
But in order to describe a recursive relation for them, as we will see in the next lemma, we also need the $\cI_{i,k}^\alpha$'s.  
\end{obs}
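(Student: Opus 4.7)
The final observation sets notation for the ``next lemma'', which must provide a recursive estimate on $\cI_{i,k}^\beta(t)$: this is the natural next step of the parametrix argument, in view of the decomposition of $\Gamma_t$ as a series over iterated convolutions of $\Psi^{y,k}$'s (Lemma~\ref{lemma:decomposition_of_Gamma}) together with the pointwise bound of Lemma~\ref{lemma:bound_for_partial_mu_P_g_r}. I would aim to prove a recursion schematically of the form
\begin{equation*}
\cI_{i,k+1}^{\beta}(t)\;\lesssim\; t^{\delta_\beta}\,\|\Delta_{-1}b\|_{C_tL^\infty}\,\cI_{0,k}^{0}(t)\;+\;t^{\delta_\beta'}\,\|\Delta_{\ge 0}b\|_{C_tB^{-\alpha}_{\infty,1}}\bigl(\cI_{0,k}^{0}(t)+\cI_{0,k}^{\alpha}(t)\bigr),
\end{equation*}
for suitable $\delta_\beta,\delta_\beta'>0$. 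The appearance of $\cI_{0,k}^{\alpha}$ on the right is precisely what forces the induction to track both $\beta=0$ and $\beta=\alpha$, as the authors anticipate.

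My plan is as follows. Insert the recursion $\Psi^{y,k+1}_{s,t}(x)=-\int_0^s b(t{-}s,x)\cdot\nabla P_{s-r}\Psi^{y,k}_{r,t}(x)\,\dd r$ from Lemma~\ref{lemma:decomposition_of_Gamma} into the integrand defining $\cI_{i,k+1}^{\beta}(t)$, so that $\nabla^{i}P_{t-s}\Psi^{y,k+1}_{s,t}(x)=-\int_0^s \nabla^{i}P_{t-s}g_{t,s,r}(x)\,\dd r$ with $g_{t,s,r}$ exactly the function to which Lemma~\ref{lemma:bound_for_partial_mu_P_g_r} applies (with $f=\Psi^{y,k}_{r,t}$). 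Crucially, that lemma gives the \emph{same} $r$-dependent factor for $|\mu|=i$ and $|\mu|=i{+}1$ up to the prefactor $(t{-}s)^{-|\mu|/2}$, so taking the product at exponents $(1{-}\beta,\beta)$ collapses cleanly to
\[
\Bigl\|\tfrac{\nabla^{i}P_{t-s}\Psi^{y,k+1}_{s,t}}{p(ct,\cdot-y)}\Bigr\|_{L^\infty}^{1-\beta}\Bigl\|\tfrac{\nabla^{i+1}P_{t-s}\Psi^{y,k+1}_{s,t}}{p(ct,\cdot-y)}\Bigr\|_{L^\infty}^{\beta}\;\lesssim\;(t-s)^{-\frac{i+\beta}{2}}\int_0^s H_r\,\dd r,
\]
where $H_r$ involves the norms $\|\nabla^j P_{s-r}\Psi^{y,k}_{r,t}/p(cs,\cdot-y)\|_{L^\infty}$ for $j=1,2$. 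Integrating $s$ over $[0,t]$ and swapping the order of integration via Fubini produces Beta-function-type integrals $\int_0^t(t-s)^{-a}s^{-b}\,\dd s = t^{1-a-b}B(1-a,1-b)$, exactly as in Lemma~\ref{lemma:bound_P_t}; finiteness requires $i+\beta<2$ and $\alpha<1$, both granted. Iterating the recursion $k$ times and summing the series \eqref{eqn:Gamma_in_series} should then produce the exponential prefactor of Theorem~\ref{theorem:heat_kernel_bound}, with the exponent $2/(1{-}\alpha)$ on $\|\Delta_{\ge 0}b\|_{C_tB^{-\alpha}_{\infty,1}}$ emerging from balancing the $(1-\alpha)$- and $\alpha$-powers appearing in the interpolation embedded in Lemma~\ref{lemma:bound_for_partial_mu_P_g_r}.

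The main technical obstacle I anticipate is the mismatch between the Gaussian denominator $p(cs,\cdot-y)$ in the $H_r$ factors and the denominator $p(ct,\cdot-y)$ required by the definition of $\cI_{0,k}^{\beta}$. No pointwise comparison is available: the ratio equals $(t/s)^{d/2}\exp\{-|x-y|^2(t-s)/(2cst)\}$, which is unbounded as $|x-y|\to 0$. A plausible remedy is to work with a sequence of slightly enlarged Gaussian constants along the iteration, absorbing the polynomial loss $(t/s)^{d/2}$ into the extra Gaussian tail at the larger scale; since Theorem~\ref{theorem:heat_kernel_bound} permits any $c>1$, there is room for this adjustment, at the cost only of a slightly worse multiplicative constant. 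Getting the $r$-integrals, the $s$-integrals, and the $\beta$-interpolation to fit together in a way that closes the iteration and still yields the sharp exponent $\tfrac{2}{1-\alpha}$ is the delicate bookkeeping step of the whole argument.
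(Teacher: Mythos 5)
Your outline does follow the paper's route: the observation under review merely fixes notation, and the recursion you sketch is exactly the content of Lemma~\ref{lemma:recursive_relation_cI}, proved by inserting the defining recursion \eqref{eqn:definition_Psi_k+1} for $\Psi^{y,k+1}_{s,t}$, applying Lemma~\ref{lemma:bound_for_partial_mu_P_g_r} with $f=\Psi^{y,k}_{r,t}$, and then closing the induction with Beta-function integrals (Lemma~\ref{lemma:bound_cIs}) and the series bound of Lemma~\ref{lemma:bound_on_exp_like_with_beta_power_series}. One structural correction: the recursion closes through the \emph{gradient} quantities, i.e.\ the right-hand side of \eqref{eqn:recursive_relations_cI} involves $\cI_{1,k}^{0}(s)$ and $\cI_{1,k}^{\alpha}(s)$, not $\cI_{0,k}^{\beta}(s)$, because the bound \eqref{eqn:bound_cP_mu_g} is expressed in terms of $\nabla P_{s-r}f$ and $\nabla^{2}P_{s-r}f$; this is precisely why both the index $i\in\{0,1\}$ and the interpolation parameter $\beta\in\{0,\alpha\}$ must be carried along.

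The genuine gap is the ``main technical obstacle'' you identify: it does not exist, and your proposed remedy would not work. No pointwise comparison of $p(cs,\cdot-y)$ with $p(ct,\cdot-y)$ is ever needed. The quantities with denominator $p(cs,\cdot-y)$ on the right-hand side of \eqref{eqn:bound_cP_mu_g} are exactly the integrands in the definition of $\cI_{1,k}^{\beta}(s)$ (the Gaussian in the denominator is tied to the outer time argument of $\cI$), and the passage from scale $cs$ to scale $ct$ happens \emph{inside} the proof of Lemma~\ref{lemma:bound_for_partial_mu_P_g_r} via the exact semigroup identity $p(c(t-s),\cdot)\ast p(cs,\cdot)=p(ct,\cdot)$: one bounds $|\nabla P_{s-r}f(z)|\le p(cs,z-y)\,\|\nabla P_{s-r}f/p(cs,\cdot-y)\|_{L^\infty}$, uses $|\partial^{\mu}p(t-s,x-z)|\lesssim (t-s)^{-|\mu|/2}p(c(t-s),x-z)$ from \eqref{eqn:bound_partial_mu_p}, and the $z$-integration produces $p(ct,x-y)$ with the \emph{same} constant $c$ and no polynomial loss. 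Enlarging the Gaussian constant along the iteration, as you suggest, cannot close the argument: the series \eqref{eqn:Gamma_in_series} has infinitely many terms, so the constant would have to be enlarged infinitely often and neither a fixed $c>1$ nor uniform multiplicative constants would survive the summation; the constant $c>1$ is consumed only once, in the single application of \eqref{eqn:bound_partial_mu_p} per iteration step, which is what makes the uniform-in-$k$ estimate of Lemma~\ref{lemma:bound_cIs} possible.
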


\begin{lemma}
\label{lemma:recursive_relation_cI}
Let $C>0$ be as in Lemma~\ref{lemma:bound_for_partial_mu_P_g_r}. 
For all $k\in \N$, $t\ge 0$, $i\in \{0,1\}$ and $\beta \in \{0,\alpha\}$  
\begin{align}
\notag 
 \cI_{i,k+1}^\beta (t) \le
C \int_0^t (t-s)^{-\frac{i+\beta}{2}} \Big( 
& \| \Delta_{-1} b\|_{  C_t   L^\infty} \cI_{1,k}^0(s) \\
\label{eqn:recursive_relations_cI}
&   + \| \Delta_{\ge 0} b\|_{  C_t   B_{\infty,1}^{-\alpha} } [ (t-s)^{-\frac{\alpha}{2}} \cI_{1,k}^0(s) + \cI_{1,k}^\alpha(s) ]\Big) \dd s . 
\end{align}
\end{lemma}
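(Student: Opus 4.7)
The plan is to unfold the recursion~\eqref{eqn:definition_Psi_k+1} so that $\Psi^{y,k+1}_{s,t}$ takes exactly the form treated by Lemma~\ref{lemma:bound_for_partial_mu_P_g_r}, apply that lemma at two orders of derivative, combine via a $(1-\beta,\beta)$-power split, and finally recognize the resulting integrals as $\cI_{1,k}^0$ and $\cI_{1,k}^\alpha$.

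\textbf{Step 1.} Rewriting the inner convolution in~\eqref{eqn:definition_Psi_k+1} as $\nabla P_{s-r}[\Psi^{y,k}_{r,t}]$ gives
\begin{align*}
\Psi^{y,k+1}_{s,t} \;=\; -\int_0^s g_{t,s,r}\,\dd r, \qquad g_{t,s,r}(z) \;=\; b(t-s,z)\cdot \nabla P_{s-r}[\Psi^{y,k}_{r,t}](z),
\end{align*}
which is precisely the function $g_{t,s,r}$ appearing in Lemma~\ref{lemma:bound_for_partial_mu_P_g_r} with $f=\Psi^{y,k}_{r,t}$. Since $\nabla^j P_{t-s}$ is linear and acts only in the spatial variable, it commutes with the $dr$-integral, so pointwise bounds on $\nabla^j P_{t-s}[g_{t,s,r}]$ lift to bounds on $\nabla^j P_{t-s}[\Psi^{y,k+1}_{s,t}]$ by the triangle inequality under the integral.

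\textbf{Step 2.} Applying Lemma~\ref{lemma:bound_for_partial_mu_P_g_r} for a multi-index of order $j\in\{i,i+1\}$, dividing by $p(ct,x-y)$ and taking $\|\cdot\|_{L^\infty}$ in $x$ yields
\begin{align*}
\Big\|\tfrac{\nabla^j P_{t-s}\Psi^{y,k+1}_{s,t}}{p(ct,\cdot-y)}\Big\|_{L^\infty} \;\le\; C(t-s)^{-j/2}\int_0^s \cR_r\,\dd r,
\end{align*}
where the crucial observation is that the bracketed factor $\cR_r=\cR_r(s)$ provided by the lemma is \emph{independent of $j$}; only the prefactor $(t-s)^{-j/2}$ depends on $j$. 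Explicitly, $\cR_r$ is a linear combination with coefficients $\|\Delta_{-1}b_{t-s}\|_{L^\infty}$, $(t-s)^{-\alpha/2}\|\Delta_{\ge 0}b_{t-s}\|_{B^{-\alpha}_{\infty,1}}$ and $\|\Delta_{\ge 0}b_{t-s}\|_{B^{-\alpha}_{\infty,1}}$ of the three quantities $K_r$, $K_r$, and $K_r^{1-\alpha}L_r^\alpha$, where $K_r:=\|\nabla P_{s-r}\Psi^{y,k}_{r,t}/p(cs,\cdot-y)\|_{L^\infty}$ and $L_r$ is defined analogously with $\nabla^2$ in place of $\nabla$.

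\textbf{Step 3.} Raise the $j=i$ estimate to the power $1-\beta$ and the $j=i+1$ estimate to the power $\beta$, and multiply. The common $r$-integral factor appears to the power $(1-\beta)+\beta=1$, and the $(t-s)$-exponents add to $-(i+\beta)/2$. Integrating in $s\in[0,t]$, taking $\sup_y$, exchanging it with the $s$- and $r$-integrals (valid in the direction $\sup_y\int\le\int\sup_y$), and bounding $\|\Delta_{-1}b_{t-s}\|_{L^\infty}$ and $\|\Delta_{\ge 0}b_{t-s}\|_{B^{-\alpha}_{\infty,1}}$ by their $C_t$-norms, I would then identify
\begin{align*}
\sup_y\int_0^s K_r\,\dd r \;=\; \cI_{1,k}^0(s), \qquad \sup_y\int_0^s K_r^{1-\alpha}L_r^\alpha\,\dd r \;=\; \cI_{1,k}^\alpha(s),
\end{align*}
which delivers~\eqref{eqn:recursive_relations_cI}.

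\textbf{Main obstacle.} The delicate part is the bookkeeping in the last step: each of the three numerators in $\cR_r$ involves the same function $\Psi^{y,k}_{r,t}$ evaluated at the fixed outer time $t$ (consistent with the convention of Observation~\ref{obs:restricting_to_one_parameter}), and the two norms $K_r$, $L_r$ must be paired up correctly inside a single $r$-integral so that the Hölder exponent $\alpha$ survives the $\sup_y$ step. What makes the whole argument clean is the $|\mu|$-independence of the bracketed factor in Lemma~\ref{lemma:bound_for_partial_mu_P_g_r}: this is exactly what allows the two $\|\cdot\|$-estimates at orders $i$ and $i+1$ to factorize perfectly under the $(1-\beta,\beta)$-split without producing cross terms.
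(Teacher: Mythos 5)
Your proof is correct and takes essentially the same route as the paper's: unfold $\Psi^{y,k+1}_{s,t}=-\int_0^s g_{t,s,r}\,\dd r$, apply Lemma~\ref{lemma:bound_for_partial_mu_P_g_r} with $f=\Psi^{y,k}_{r,t}$ at derivative orders $i$ and $i+1$, exploit that the bracketed factor is independent of the derivative order so the $(1-\beta,\beta)$-interpolation only combines the $(t-s)$-powers, and then take $\sup_y$ outside the $r$-integral. The one caveat (present in the paper's own one-line conclusion as well) is that identifying $\sup_y\int_0^s K_r\,\dd r$ and $\sup_y\int_0^s K_r^{1-\alpha}L_r^{\alpha}\,\dd r$ with $\cI^0_{1,k}(s)$ and $\cI^{\alpha}_{1,k}(s)$ tacitly reads the definition of $\cI$ with the terminal time $t$ fixed (i.e.\ with $\Psi^{y,k}_{r,t}$ rather than $\Psi^{y,k}_{r,s}$), which is harmless since the subsequent bounds only use the $C_t$-norms of $b$.
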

\begin{proof}
We claim that the following holds. 
For all $k \in \N$, $y\in \R^d$ and $i \in \{0,1,2\}$ 
\begin{align}
\notag &  \left\| \tfrac{ \nabla^i P_{t-s}[\Psi^{y,k+1}_{s,t}] }{p (c t, \cdot -     y)} \right\|_{L^{\infty}} 
     \le C  (t - s)^{- \frac{i}{2}}  \bigg( \| \Delta_{-1} b\|_{  C_t   L^\infty} \int_0^s 
 \left\| \tfrac{ \nabla P_{s-r}[\Psi^{y,k}_{r,t}] }{p (c s, \cdot -     y)} \right\|_{L^{\infty}} \dd r \\
\notag &  \qquad    + \| \Delta_{\ge 0} b\|_{  C_t   B_{\infty,1}^{-\alpha} } \Big[
 (t -    s)^{- \frac{\alpha}{2}}
\int_0^s 
 \left\| \tfrac{ \nabla P_{s-r}[\Psi^{y,k}_{r,t}] }{p (c s, \cdot -     y)} \right\|_{L^{\infty}} \dd r  \\
& \hspace{3.5cm} + 
  \int_0^s \left\| \tfrac{\nabla P_{s-r}[\Psi^{y,k}_{r,t}] }{p (c s, \cdot -     y)} \right\|_{L^{\infty}}^{1 - \alpha} 
  \left\| \tfrac{\nabla^2 P_{s-r}[\Psi^{y,k}_{r,t}] }{p (c s, \cdot - y)} \right\|_{L^{\infty}}^{\alpha} \dd r \Big] \bigg).
  \label{eqn:bound_P_Psi_etc}
\end{align}
From this \eqref{eqn:recursive_relations_cI} follows by definition of $\cI_k^\beta$. 
Now let us prove \eqref{eqn:bound_P_Psi_etc}. 
Let $g_{t,s,r}$ be as in Lemma~\ref{lemma:bound_for_partial_mu_P_g_r} with $f = \Psi^{y,k}_{r,t}$. 
Observe that by definition of $\Psi^{y,k+1}_{s,t}$ \eqref{eqn:definition_Psi_k+1} 
we can write 
\begin{align*}
\Psi^{y,k+1}_{s,t}(z)
= \int_0^s b(  t-s ,  z) \cdot \nabla  P_{s-r}[ \Psi^{y,k}_{r,t}](z) \dd r 
 = \int_0^s  g_{t,s,r}(z) \dd r ,
\end{align*}
so that (one can verify the interchange of integrals by Fubini's theorem and 
using Lemma~\ref{lemma:decomposition_of_Gamma})
  \begin{align*}
   | \nabla^i P_{t-s}[\Psi^{y,k+1}_{s,t}] (x) |
&     \le  \int_0^s       | \nabla^i P_{t-s} g_{t,s,r} (x) | \dd r .
  \end{align*}
With this, \eqref{eqn:bound_P_Psi_etc} follows from \eqref{eqn:bound_cP_mu_g}. 
\end{proof}

In the proof of Lemma~\ref{lemma:bound_cIs} we will use the following bound for the beta function (see \eqref{eqn:beta_function_def}).

\begin{lemma}
\label{lemma:bound_on_beta_function}
Let $\delta \in ( 0,1] $. 
Then $M_\delta := \sup \{ B(\beta, \gamma) \gamma^\beta : (\beta ,\gamma) \in [\delta, 1] \times [\delta,\infty) \} <\infty$. 
Hence, for all $(\beta, \gamma) \in [\delta, 1] \times [\delta,\infty)$, 
\begin{align*}  
B(\beta, \gamma) = B(\gamma, \beta) \le M_{\delta} \gamma^{- \beta}. 
\end{align*}
\end{lemma}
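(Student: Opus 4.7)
The plan is to bound $B(\beta,\gamma)\gamma^{\beta}$ by a constant depending only on $\delta$, by splitting the parameter domain into the tail region $\gamma \ge 2$ and the compact piece $\gamma \in [\delta,2]$.

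First, in the tail region $\gamma \ge 2$ I perform the substitution $r = 1 - u/\gamma$ in \eqref{eqn:beta_function_def}. This yields
\begin{align*}
\gamma^{\beta} B(\beta,\gamma) = \int_{0}^{\gamma} \bigl(1 - \tfrac{u}{\gamma}\bigr)^{\gamma-1} u^{\beta-1} \, \mathrm{d}u.
\end{align*}
Since $\gamma \ge 2$ we have $\gamma - 1 \ge 0$ and $\frac{\gamma-1}{\gamma} \ge \frac{1}{2}$, so the elementary inequality $1 - x \le e^{-x}$ for $x \in [0,1]$ gives $(1 - u/\gamma)^{\gamma-1} \le e^{-u(\gamma-1)/\gamma} \le e^{-u/2}$ for $u \in [0,\gamma]$. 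Hence
\begin{align*}
\gamma^{\beta} B(\beta,\gamma) \le \int_{0}^{\infty} e^{-u/2} u^{\beta-1} \, \mathrm{d}u = 2^{\beta} \Gamma(\beta),
\end{align*}
and this is uniformly bounded for $\beta \in [\delta,1]$.

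For the complementary region $(\beta,\gamma) \in [\delta,1] \times [\delta,2]$, I simply invoke compactness: the map $(\beta,\gamma) \mapsto B(\beta,\gamma)\gamma^{\beta}$ is continuous on $(0,\infty)^{2}$ (standard, via dominated convergence applied to the integrand $r^{\gamma-1}(1-r)^{\beta-1}$, which is integrably dominated on any compact subset of $(0,\infty)^2$), and $[\delta,1]\times[\delta,2]$ is compact, so the product is bounded there. Combining the two regions gives $M_{\delta} < \infty$, and the stated inequality then follows from the symmetry $B(\beta,\gamma) = B(\gamma,\beta)$, applied so that the ``large'' variable plays the role of $\gamma$ in the estimate above.

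The only mildly delicate point is the substitution step: it is important that $\gamma - 1 \ge 0$ when applying $(1-x)^{\gamma-1} \le e^{-x(\gamma-1)}$, since for negative exponents this inequality reverses. This is precisely why I first restrict to $\gamma \ge 2$ (in fact $\gamma \ge 1$ would suffice, but the threshold $2$ yields the clean constant $1/2$) and dispose of the remaining compact range by continuity.
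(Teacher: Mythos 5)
Your proof is correct, but it takes a genuinely different route from the paper. The paper's argument goes through the identity $B(\beta,\gamma)=\Gamma(\beta)\Gamma(\gamma)/\Gamma(\beta+\gamma)$ together with Stirling's asymptotic for $\Gamma$, and then a sequential argument: for any $\beta_n\to\beta\in[\delta,1]$ and $\gamma_n\to\infty$ one has $B(\beta_n,\gamma_n)\gamma_n^{\beta_n}\to\Gamma(\beta)$, which combined with continuity yields boundedness on $[\delta,1]\times[\delta,\infty)$ (and, as a by-product, identifies the sharp asymptotic constant $\Gamma(\beta)$). You instead work directly with the integral definition \eqref{eqn:beta_function_def}: the substitution $r=1-u/\gamma$ gives $\gamma^{\beta}B(\beta,\gamma)=\int_0^{\gamma}(1-u/\gamma)^{\gamma-1}u^{\beta-1}\dd u$, and for $\gamma\ge 2$ the bound $(1-u/\gamma)^{\gamma-1}\le e^{-u/2}$ (valid since $u/\gamma\in[0,1]$ and the exponent $\gamma-1\ge0$, as you rightly flag) yields the explicit uniform bound $2^{\beta}\Gamma(\beta)\le 2\sup_{[\delta,1]}\Gamma$; the remaining compact region $[\delta,1]\times[\delta,2]$ is handled by continuity (dominated convergence with the integrable majorant $r^{\delta-1}(1-r)^{\delta-1}$). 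Your approach is more elementary and self-contained — it avoids the Gamma-function identity and Stirling entirely and produces an explicit constant — while the paper's approach is shorter given the cited classical facts and shows that the bound $M_\delta\gamma^{-\beta}$ is of the correct order as $\gamma\to\infty$. One cosmetic remark: the final inequality $B(\beta,\gamma)\le M_\delta\gamma^{-\beta}$ is immediate from the definition of $M_\delta$ as a supremum; the symmetry $B(\beta,\gamma)=B(\gamma,\beta)$ is merely recorded so the lemma can be invoked with the arguments in either order, not needed to "make the large variable play the role of $\gamma$" in your tail estimate (your estimate already covers exactly the stated parameter range).
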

\begin{proof}
By 
\cite[Theorem 1.1.4 and Theorem 1.4.1]{AnAsRo99} we have for $\gamma,\beta >0$
\begin{align*}
B(\beta, \gamma) = \frac{\Gamma(\gamma) \Gamma(\beta)}{\Gamma(\gamma+\beta)}, 
\quad \mbox{ and } \quad 
\lim_{\gamma \rightarrow \infty} \frac{ \Gamma (\gamma)}{ \sqrt{2\pi} \gamma^{\gamma -\frac12} e^{-\gamma}} =1 . 
\end{align*}
From this we deduce the following. 
Let $\beta_n \rightarrow \beta$ for some $\beta \in [\delta,1]$ and $\gamma_n \rightarrow \infty$. 
Then 
\begin{align*}
\lim_{n \rightarrow \infty} \frac{B(\beta_n, \gamma_n) \gamma_n^{\beta_n}}{\Gamma(\beta_n)} 
& = 
\lim_{n \rightarrow \infty} \frac{ \sqrt{2\pi} \gamma_n^{\gamma_n -\frac12} e^{-\gamma_n} \gamma_n^{\beta_n}}{ \sqrt{2\pi} (\gamma_n+\beta_n)^{\gamma_n + \beta_n -\frac12} e^{-(\gamma_n +\beta_n) } } 
\\
& = 
\lim_{n \rightarrow \infty}  (1+\frac{\beta_n}{\gamma_n})^{- (\gamma_n + \beta_n -\frac12)} e^{ \beta_n } \\
& = 
\lim_{\gamma \rightarrow \infty}  (1+\frac{\beta_n}{\gamma_n})^{- \gamma_n } e^{ \beta_n } = e^{-\beta_n} e^{\beta_n} = 1. 
\end{align*}
Therefore 
\begin{align*}
\lim_{n \rightarrow \infty} B(\beta_n, \gamma_n) \gamma_n^{\beta_n} = \Gamma(\beta), 
\end{align*}
so that from the continuity of $\Gamma$ it follows that $(\beta, \gamma) \mapsto B(\beta,\gamma) \gamma^\beta$ is a bounded function on $[\delta,1] \times [\delta,\infty)$. 
\end{proof}

Let us now use the recursive relation for $\cI_{i,k}^\beta$ and the bounds on the beta function to obtain estimates for $\cI_{i,k}^\beta$: 

\begin{lemma}
\label{lemma:bound_cIs}
Let $C>0$ be as in Lemma~\ref{lemma:bound_for_partial_mu_P_g_r}  
and let $M = 8 M_{\frac{1}{2} - \alpha}$ with $M_\delta$ as in Lemma~\ref{lemma:bound_on_beta_function}. 
There exists a $K > 0$   (independent of $b$)    such that 
for all $k \in \N$, $t>0$, $\beta \in \{0,\alpha\}$ and $i \in \{0,1\}$ 
\begin{align}
\label{eqn:bound_cI}
\cI_{i,k}^\beta(t) \le 
K \sum_{\substack{ m,n\in \N_0:\\ m+n=k} } t^{-\frac{i+\beta}{2}}
\frac{(CM \| \Delta_{-1} b\|_{  C_t   L^\infty} t^{\frac12} )^m }{( m!)^{\frac{1-\beta}{2} } }
\frac{(CM \| \Delta_{\ge 0} b\|_{  C_t   B_{\infty,1}^{-\alpha} }  t^{\frac{1-\alpha}{2} } )^n }{(n!)^{\frac{1-\alpha-\beta}{2} } }. 
\end{align}
\end{lemma}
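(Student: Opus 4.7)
The plan is to prove \eqref{eqn:bound_cI} by induction on $k$, using Lemma~\ref{lemma:recursive_relation_cI} to go from $k$ to $k+1$ and Lemma~\ref{lemma:bound_on_beta_function} to extract the factorial gain at each step. Throughout, write $a := \|\Delta_{-1} b\|_{C_t L^\infty}$ and $b' := \|\Delta_{\ge 0} b\|_{C_t B_{\infty,1}^{-\alpha}}$, and denote the right-hand side of \eqref{eqn:bound_cI} by $K \sum_{m+n=k} A_{m,n}^{i,\beta}(t)$.

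For the base case $k=1$, note that $\Psi^{y,1}_{s,t}(z) = -b(t-s,z)\cdot\nabla p(s,z-y)$ has precisely the structure of the function $g_{t,s,r}$ appearing in Lemma~\ref{lemma:bound_for_partial_mu_P_g_r} with $r=0$ and the formal choice $f=\delta_y$ (so that $\nabla P_s f = \nabla p(s,\cdot-y)$). The Gaussian derivative estimate \eqref{eqn:bound_partial_mu_p} yields $\|\nabla^j p(s,\cdot-y)/p(cs,\cdot-y)\|_{L^\infty}\lesssim s^{-j/2}$ for $j=1,2$; plugging this into \eqref{eqn:bound_cP_mu_g} and integrating in $s$ via standard beta integrals produces $\cI_{i,1}^\beta(t)\lesssim t^{-(i+\beta)/2}(a\,t^{1/2}+b'\,t^{(1-\alpha)/2})$, which matches \eqref{eqn:bound_cI} for $k=1$ as soon as $K$ is large enough compared to $CM$.

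For the induction step, I substitute the bound at level $k$ into the three terms on the right-hand side of \eqref{eqn:recursive_relations_cI}. Each summand $(m,n)$ with $m+n=k$ produces an integral of the form
\begin{align*}
\int_0^t (t-s)^{-\mu_1}\,s^{\mu_2-1}\,\dd s \;=\; t^{\mu_2-\mu_1}\,B(1-\mu_1,\mu_2),
\end{align*}
where $(\mu_1,\mu_2)$ depends on which of the three terms one is treating and on $(i,\beta)$. A direct check shows $(1-\mu_1,\mu_2)\in[1/2-\alpha,1]\times[1/2-\alpha,\infty)$ in all cases, so Lemma~\ref{lemma:bound_on_beta_function} applies with $\delta=1/2-\alpha$ and gives $B(1-\mu_1,\mu_2)\le M_{1/2-\alpha}\,\mu_2^{-(1-\mu_1)}$. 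Using $(m+1)!=(m+1)m!$ for the term that increments $m$ and $(n+1)!=(n+1)n!$ for the two terms that increment $n$, the factor $\mu_2^{-(1-\mu_1)}$ is precisely what is needed to produce the factorial exponents $((m+1)!)^{(1-\beta)/2}$ or $((n+1)!)^{(1-\alpha-\beta)/2}$ appearing in $A^{i,\beta}_{m+1,n}(t)$ or $A^{i,\beta}_{m,n+1}(t)$, respectively.

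The main obstacle is the careful bookkeeping of constants: to close the induction, each of the three contributions must be at most $\tfrac13\, K A^{i,\beta}_{m',n'}(t)$ at the new index $(m',n')$ so that together they sum to $\le K A^{i,\beta}_{m',n'}(t)$. The choice $M=8 M_{1/2-\alpha}$ is calibrated precisely for this purpose. In each of the twelve elementary case checks (three terms in \eqref{eqn:recursive_relations_cI} times the four combinations $(i,\beta)\in\{0,1\}\times\{0,\alpha\}$), the ratio of the estimated contribution to the target reduces to $c\cdot M_{1/2-\alpha}/M$ for some explicit constant $c\le 2$, yielding at most $1/4<1/3$. Once this has been verified, the induction closes and gives \eqref{eqn:bound_cI} for all $k$.
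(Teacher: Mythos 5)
Your strategy is the same as the paper's (base case from the Gaussian bounds and the duality estimate, induction via Lemma~\ref{lemma:recursive_relation_cI} with the beta integrals controlled by Lemma~\ref{lemma:bound_on_beta_function}), and the base case is fine up to the small point that Lemma~\ref{lemma:bound_for_partial_mu_P_g_r} is stated for $f\in L^1$, so the ``formal choice $f=\delta_y$'' is not literally admissible; this is harmless (take $r=s/2$ and $f=p(s/2,\cdot-y)$, or argue directly with \eqref{eqn:estimate_b_g_integral_with_B_infty_1_norm} as the paper does). The genuine gap is in the induction step, in the claim that the beta-function factor $\mu_2^{-(1-\mu_1)}$ ``is precisely what is needed'' in all cases. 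Look at the third term of \eqref{eqn:recursive_relations_cI}, the contribution of $\|\Delta_{\ge 0}b\|_{C_t B^{-\alpha}_{\infty,1}}\,\cI^{\alpha}_{1,k}$, when the target is $\beta=0$. The induction hypothesis gives the $(m,n)$ summand of $\cI^{\alpha}_{1,k}(s)$ with the $\beta=\alpha$ weights $(m!)^{-\frac{1-\alpha}{2}}(n!)^{-\frac{1-2\alpha}{2}}$, whereas the target summand at $(m,n+1)$ for $\beta=0$ requires $(m!)^{-\frac12}\,((n+1)!)^{-\frac{1-\alpha}{2}}$. The beta integral only supplies $B\big(\tfrac{2-i}{2},\tfrac{m+(n+1)(1-\alpha)}{2}\big)\lesssim \big(m+(n+1)(1-\alpha)\big)^{-\frac{2-i}{2}}$, a single power of $m+n+1$; it cannot repair the factorial-sized deficit $(m!)^{\frac{\alpha}{2}}(n!)^{\frac{\alpha}{2}}$ separating the two sets of exponents. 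Concretely, the ratio of this estimated contribution to the target summand behaves like $(m!)^{\frac{\alpha}{2}}(n!)^{\frac{\alpha}{2}}(n+1)^{\frac{1-\alpha}{2}}(m+n+1)^{-\frac{2-i}{2}}$, which is unbounded, so this case check does not reduce to a constant times $M_{\frac12-\alpha}/M$ and your induction does not close for $\beta=0$ --- which is exactly the case used later in Lemma~\ref{lemma:difference_gamma_and_p}. (A smaller slip: the three terms do not all feed the same new index; the first feeds $(m+1,n)$ and the other two feed $(m,n+1)$, so the ``each at most $\tfrac13$'' split is not the right accounting either.)

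To close the argument you need the $\beta=\alpha$ part of the induction hypothesis to be stronger than \eqref{eqn:bound_cI} itself: for instance, carry for $\cI^{\alpha}_{1,k}$ the same factorial weights $(m!)^{-\frac12}(n!)^{-\frac{1-\alpha}{2}}$ as for $\beta=0$, multiplied only by a polynomial correction such as $\big(\tfrac{m+1+n(1-\alpha)}{2}\big)^{\frac{\alpha}{2}}$. This propagates through \eqref{eqn:recursive_relations_cI}: the $\tfrac{\alpha}{2}$ lost in the beta exponent at a step producing a $\beta=\alpha$ quantity is recovered at the next step, where the first beta argument is larger by $\tfrac{\alpha}{2}$ once one returns to $\beta=0$; equivalently one can do a two-step bookkeeping for excursions through the $\beta=\alpha$ family. (For what it is worth, the paper's own displayed induction step substitutes the target exponents $\tfrac{1-\beta}{2},\tfrac{1-\alpha-\beta}{2}$ for all three terms without comment, so your gap sits precisely where the paper is most terse; but as written, your argument --- in particular the assertion that the exponents match exactly and that all twelve case checks give a ratio at most $1/4$ --- is not correct and needs the above strengthening.)
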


\begin{proof}
We give a proof by induction. Instead of ``$\| \Delta_{-1} b\|_{  C_t   L^\infty}$'' and ``$\| \Delta_{\ge 0} b\|_{  C_t   B_{\infty,1}^{-\alpha} }$'' we will write ``$X$'' and ``$Y$'', respectively. 

\noindent $\bullet$ The induction start, $k=1$: \\
We have for $\mu \in \N_0^d$ with $|\mu|\le 2$ 
\begin{align*}
\partial^\mu P_{t-s}[ \Psi^{y,1}_{s,t}](x) = \int_{\R^d} \partial^\mu p(t-s,x-z) \Psi^{y,1}_{s,t}(z) \dd z = \int_{\R^d} b(z) \cdot g_\mu(z) \dd z
\end{align*} 
with $g_\mu(z)= \nabla p(s,z-y) \partial^\mu p(t-s,x-z)$. 
By \eqref{eqn:bound_partial_mu_p} there exists a $  K  >0$ such that for all $\mu, \nu \in \N_0^d$ with $|\mu|\le 2$ and $|\nu| \le 1$:
\begin{align*}
|g_\mu(z)| &\le   K   (t-s)^{-\frac{|\mu|}{2}} s^{-\frac12} p(cs,z-y) p(c(t-s), x-z), \\
|\partial^\nu g_\mu(z)| &\le   K   (t-s)^{-\frac{|\mu|}{2}} s^{-\frac12} [(t-s)^{-\frac12} + s^{-\frac12}] p(cs,z-y) p(c(t-s), x-z).
\end{align*}
Therefore, by \eqref{eqn:estimate_b_g_integral_with_B_infty_1_norm}, for $j \in \{0,1,2\}$
\begin{calc}
\begin{align*}
\left|  \nabla^j P_{t-s}[ \Psi^{y,k}_{s,t}] (x)\right|
\le   K   (t-s)^{-\frac{j}{2}} s^{-\frac{1}{2}} \Big( X + Y [(t-s)^{-\frac{\alpha}{2}} + s^{-\frac{\alpha}{2}}]\Big) p(ct,x-y),
\end{align*}
and thus 
\end{calc}
\begin{align*}
\left\| \frac{ \nabla^j P_{t-s}[ \Psi^{y,1}_{s,t}]}{p(ct,\cdot -y)} \right\|_{L^\infty} 
\le   K   (t-s)^{-\frac{j}{2}} s^{-\frac{1}{2}} \Big( X + Y [(t-s)^{-\frac{\alpha}{2}} + s^{-\frac{\alpha}{2}}]\Big),
\end{align*}
so that for $i \in \{0,1\}$ 
\begin{align*}
\cI_{i,1}^\beta(t)
& \le   K   \int_0^t 
(t-s)^{-\frac{i+\beta}{2}} s^{-\frac{1}{2}} \Big( X + Y [(t-s)^{-\frac{\alpha}{2}} + s^{-\frac{\alpha}{2}}]\Big) \dd s \\
& \le  t^{-\frac{i+\beta}{2}}   K  
 \Big(  B(\tfrac{2-i-\beta}{2}, \tfrac12) X   t^{\frac12}   + \big[B(\tfrac{2-i-\alpha-\beta}{2},\tfrac12) + B(\tfrac{2-i-\beta}{2}, \tfrac{1-\alpha}{2}) \big] Y   t^{\frac{1-\alpha}{2}}    \Big) . 
\end{align*}
Hence, for $k=1$, the inequality \eqref{eqn:bound_cI} follows by applying Lemma~\ref{lemma:bound_on_beta_function} for the beta functions and using that $\delta \mapsto M_\delta$ is decreasing: 
\begin{align*}
B(\tfrac{2-i-\beta}{2}, \tfrac12) 
& \le M_{\frac{2-i-\beta}{2}} (\frac12)^{-\frac{2-i-\beta}{2}} \le 2 M_{\frac{1}{2}-\alpha} \le M, \\
B(\tfrac{2-i-\alpha-\beta}{2},\tfrac12) 
& \le M_{\frac{2-i-\alpha-\beta}{2}} 2^{\frac{1-\alpha-\beta}{2}} \le M, \\ 
B(\tfrac{2-i-\beta}{2}, \tfrac{1-\alpha}{2}) 
& \le M_{\frac{2-i-\beta}{2}} (\tfrac{1-\alpha}{2})^{-\frac{1-\beta}{2}} 
\le M_{\frac12-\alpha} 4^{\frac{1-\beta}{2}} \le M. 
\end{align*}

\noindent $\bullet$ The induction step, from $k$ to $k + 1$: \\
Let $k\in \N$ and assume that \eqref{eqn:bound_cI} holds. Then by Lemma~\ref{lemma:recursive_relation_cI} 
\begin{align*}
\cI_{i,k+1}^\beta (t)
& \le C \int_0^t (t-s)^{-\frac{i+\beta}{2}} \left( X \cI_{1,k}^0(s) + Y [ (t-s)^{-\frac{\alpha}{2}} \cI_{1,k}^0(s) + \cI_{1,k}^\alpha(s) ] \right)\dd s \\
& \le   K   C \sum_{\substack{ m,n\in \N_0:\\ m+n=k} } 
\frac{(CM X  )^m }{( m!)^{\frac{1-\beta}{2}} }
\frac{(CM Y  )^n }{(n!)^{\frac{1-\alpha-\beta}{2} } } \\
& \qquad \times 
 \int_0^t (t-s)^{-\frac{i+\beta}{2}} s^{-\frac{1}{2} +\frac{m}{2}+ n \frac{1-\alpha}{2}} \left( X  + Y [ (t-s)^{-\frac{\alpha}{2}}   + s^{-\frac{\alpha}{2}}] \right) \dd s . 
\end{align*}
We bound the latter integral, for which we have the following identity:
\begin{align*}
\MoveEqLeft  \int_0^t (t-s)^{-\frac{i+\beta}{2}} s^{-\frac{1}{2} +\frac{m}{2}+ n \frac{1-\alpha}{2}} \left( X  + Y [ (t-s)^{-\frac{\alpha}{2}}   + s^{-\frac{\alpha}{2}} ] \right)\dd s\\ 
& = t^{-\frac{i+\beta}{2}} t^{\frac{m}{2}+ n \frac{1-\alpha}{2}}
\Big( X   t^{\frac12}   B(\tfrac{1-\beta}{2}, \tfrac{m+1 + n(1-\alpha)}{2}) \\
& \qquad + Y   t^{\frac{1-\alpha}{2}}   \big[ B(\tfrac{1-\alpha-\beta}{2},  \tfrac{m+1 + n(1-\alpha)}{2})  
 + B(\tfrac{1-\beta}{2},  \tfrac{m + (n+1)(1-\alpha)}{2})  \big] \Big) .
\end{align*}
This shows that the power of $t$ is the right one. 
We bound the beta function terms to finish the proof. 
By Lemma~\ref{lemma:bound_on_beta_function} we have 
\begin{align*}
B(\tfrac{1-\beta}{2}, \tfrac{m+1 + n(1-\alpha)}{2}) 
& \le M_{\frac{1-\beta}{2}} \left( \tfrac{m+1 + n(1-\alpha)}{2} \right)^{-\frac{1-\beta}{2}} 
 \le 4 M_{\frac{1}{2}-\alpha}  \left(  m+1 \right)^{-\frac{1-\beta}{2}} ,\\
B(\tfrac{1-\alpha-\beta}{2}, \tfrac{m+1 + n(1-\alpha)}{2}) 
& \le M_{\frac{1-\alpha - \beta}{2}} \left( \tfrac{m+1 + n(1-\alpha)}{2} \right)^{-\frac{1-\alpha-\beta}{2}} 
 \le 4 M_{\frac{1}{2}-\alpha}  \left(  n+1 \right)^{-\frac{1-\alpha-\beta}{2}}, \\
 B(\tfrac{1-\beta}{2}, \tfrac{m + (n+1)(1-\alpha)}{2}) 
& \le M_{\frac{1-\beta}{2}} \left( \tfrac{m + (n+1)(1-\alpha)}{2} \right)^{-\frac{1-\beta}{2}} 
 \le 4 M_{\frac{1}{2}-\alpha}  \left(  n+1 \right)^{-\frac{1-\alpha-\beta}{2}} . 
\end{align*}
\begin{calc}
Here we used that $\frac{1-\alpha}{2} \in (\frac14, \frac12)$
\begin{align*}
\left(\frac{1-\alpha}{2}\right)^{- \frac{1-\alpha-\beta}{2}}
\le 4^{ \frac{1-\alpha-\beta}{2}} \le 4. 
\end{align*}
\end{calc}
\end{proof}

\begin{remark}
The restriction $\alpha \in (0,\frac12)$ in Lemma~\ref{lemma:bound_cIs} is necessary since $M = 4 M_{\frac12 - \alpha}$ diverges as $\alpha \uparrow \frac12$ (see see the definition of $M_\delta$ in Lemma~\ref{lemma:bound_on_beta_function}). 
This is not unexpected, since for $\alpha > \frac12$ we are no longer in the Young regime and we would need techniques like paracontrolled distributions or regularity structures to solve the equation for $\Gamma$. 
\end{remark}

Lemma~\ref{lemma:bound_cIs} together with the following basic inequality constitutes the proof of Theorem~\ref{theorem:heat_kernel_bound}. 

\begin{lemma}
\label{lemma:bound_on_exp_like_with_beta_power_series}
Let $\beta \in (0,1)$. 
Then there exists an $L>0$ such that for $z\ge 0$
\begin{align*}
  \sum_{k = 0}^{\infty} \frac{z^k}{(k!)^{\beta}}    \le L \exp (L z^{\frac{1}{\beta}}) .
\end{align*}
\end{lemma}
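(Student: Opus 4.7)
The plan is to reduce the power series to one that behaves like an exponential of $z^{1/\beta}$ by combining a Stirling-type lower bound on $k!$ with a simple split of the summation index.

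Writing $w = z^{1/\beta}$ turns the series into $\sum_{k \ge 0} w^{\beta k}/(k!)^{\beta}$. The first step is the termwise bound: using $k! \ge (k/e)^{k}$ for $k \ge 1$ (which follows from the series expansion of $e^k$), I get
\begin{equation*}
\frac{z^{k}}{(k!)^{\beta}} = \frac{w^{\beta k}}{(k!)^{\beta}} \le \Bigl(\frac{we}{k}\Bigr)^{\beta k}.
\end{equation*}
Since the case $k=0$ contributes just $1$, it suffices to control $S(w) := \sum_{k \ge 1}(we/k)^{\beta k}$.

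The second step is to split $S(w)$ at $k_{0} := \lceil 2ew \rceil$. For the tail $k \ge k_{0}$, we have $we/k \le 1/2$, so
\begin{equation*}
\sum_{k \ge k_{0}} \Bigl(\frac{we}{k}\Bigr)^{\beta k} \le \sum_{k \ge k_{0}} 2^{-\beta k} \le \frac{1}{1-2^{-\beta}},
\end{equation*}
which is a constant depending only on $\beta$. For the bulk sum over $1 \le k < k_{0}$, I would maximise the exponent: differentiating $\varphi(k) := \beta k \log(we/k)$ gives $\varphi'(k) = \beta \log(w/k)$, hence $\varphi$ attains its maximum on $(0,\infty)$ at $k=w$, with $\varphi(w) = \beta w$. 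Each of the at most $k_{0} \le 2ew+1$ terms in the bulk is therefore $\le e^{\beta w}$, and the bulk sum is bounded by $(2ew+1)\, e^{\beta w}$.

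Combining the two pieces, $S(w) \le C_{\beta} + (2ew+1)\,e^{\beta w}$, and since $\beta < 1$ the polynomial prefactor can be absorbed into an exponential with any exponent $L > \beta$, so $S(w) \le L\,e^{Lw}$ for a suitable $L = L(\beta)$. Substituting back $w = z^{1/\beta}$ yields the claim. There is no real obstacle here: the argument is entirely elementary and the only minor bookkeeping is that the continuous maximiser $k = w$ need not be an integer, which is harmless because $\varphi$ is concave and the bound $\varphi(k) \le \beta w$ holds for all real $k \ge 0$.
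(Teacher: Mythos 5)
Your proof is correct, and it takes a genuinely different route from the paper. The paper's argument is a two-line application of H\"older's inequality: writing $z^k = ((1+\delta)z)^k(1+\delta)^{-k}$ and applying H\"older with exponents $\tfrac{1}{\beta}$ and $\tfrac{1}{1-\beta}$ turns the first factor into $\bigl(\sum_k ((1+\delta)z)^{k/\beta}/k!\bigr)^{\beta} = \exp\bigl(\beta(1+\delta)^{1/\beta}z^{1/\beta}\bigr)$ and the second into a convergent geometric series, which gives the claim immediately. You instead work termwise: the Stirling-type bound $k!\ge (k/e)^k$, a split of the sum at $k_0=\lceil 2ew\rceil$ so that the tail is dominated by a geometric series, and a maximisation of the concave exponent $\varphi(k)=\beta k\log(we/k)$ at $k=w$ for the bulk, followed by absorbing the polynomial prefactor $2ew+1$ into the exponential (possible since $\beta<1$, though in fact any $L$ strictly larger than the exponent's coefficient would do). All the individual steps check out, including the handling of $k=0$, the case of small $w$ where the bulk is empty, and the remark that $\varphi(k)\le\beta w$ holds for all real $k>0$ by concavity. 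What the H\"older argument buys is brevity; what your argument buys is a completely elementary, self-contained derivation with explicit constants, and it makes visible that the true growth is $\exp(\beta z^{1/\beta})$ up to a polynomial factor, essentially the same sharpness as the paper's bound as $\delta\downarrow 0$.
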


\begin{proof}
Let $\delta>0$. By writing $z^k = ((1+\delta)z)^k (1+\delta)^{-k}$ we get with H\"older's inequality 
  \begin{align*}
    \sum_{k = 0}^{\infty} \frac{z^k}{(k!)^{\beta}} 
& \le
    \left( \sum_{k = 0}^{\infty} \left( \frac{((1 + \delta)
    z)^k}{(k!)^{\beta}} \right)^{\frac{1}{\beta}} \right)^{\beta} 
    \left( \sum_{k =   0}^{\infty} (1 + \delta)^{- \frac{k}{1 - \beta} } \right)^{1 - \beta} \simeq \exp (\beta (1 + \delta)^{\frac{1}{\beta}} z^{\frac{1}{\beta}}).
  \end{align*}
\end{proof}

\begin{lemma}
\label{lemma:difference_gamma_and_p}
There exists a $C>0$ (independent of $b$) such that for all $\mu \in \N_0^d$ with $| \mu | \le 1$, and for all $t>0$, $x,y \in \R^d$, 
\begin{align}
\label{eqn:expression_partial_mu_Gamma}
&  \partial^\mu_x \Gamma_t(x,y) 
=
 \partial^\mu_x p(t,x-y)
 + \sum_{k = 1}^{\infty} \int_0^t \int_{\mathbb{R}^d} \partial^{\mu}_x p (t - s, x -  z) \Psi^{y,k}_{s,t} ( z ) \dd z \dd s,  \\
\notag 
&|\partial^\mu_x \Gamma_t(x,y) - \partial^\mu_x p(t,x-y)| \\
\notag 
&\qquad  \le  C   t^{- \frac{| \mu |}{2}}  p (c t, x - y)
 ( \| \Delta_{-1} b \|_{  C_t   L^\infty} t^{\frac12} \vee \| \Delta_{\ge 0} b \|_{  C_t   B_{\infty, 1}^{- \alpha}} t^{\frac{1-\alpha}{2}})
 \\
\label{eqn:difference_gamma_and_p}
& \qquad \qquad  \times 
  \exp \left( C t \Big[ \| \Delta_{-1} b \|_{  C_t   L^\infty}^{2}+ \| \Delta_{\ge 0} b \|_{  C_t   B_{\infty, 1}^{- \alpha}}^{\frac{2}{1 - \alpha}} \Big]\right)
 . 
\end{align}
\end{lemma}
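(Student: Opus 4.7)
My plan is as follows. By Corollary~\ref{corollary:limit_gamma_n} it suffices to prove both claims for smooth drifts $b \in C([0,\infty), C_{\rm b}^\infty)$ with constants that depend only on the Besov norms (not on any higher regularity); the general case then follows by approximating $b$ with $b^{(n)}$ and passing to the limit, as in the proof of Proposition~\ref{proposition:transition_density}. For the series identity~\eqref{eqn:expression_partial_mu_Gamma} I would specialise Lemma~\ref{lemma:decomposition_of_Gamma} to $s=0$ (Observation~\ref{obs:restricting_to_one_parameter}) and move $\partial^\mu_x$ first inside the spatial convolution — justified by dominated convergence, using $\Psi^{y,k}_{s,t}\in L^1$ and the Gaussian pointwise bound~\eqref{eqn:bound_partial_mu_p} — and then inside the sum, justified by the uniform-on-compacts absolute convergence supplied by the estimates below.

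For the bound~\eqref{eqn:difference_gamma_and_p} I would abbreviate $X:=\|\Delta_{-1}b\|_{C_t L^\infty}$ and $Y:=\|\Delta_{\ge 0}b\|_{C_t B_{\infty,1}^{-\alpha}}$. For each $k\ge 1$ the definition of $\cI^0_{|\mu|,k}$ together with the Gaussian pointwise inequality $|\partial^\mu g(x)|\le \|\nabla^{|\mu|}g/p(ct,\cdot-y)\|_{L^\infty}\,p(ct,x-y)$ (see Observation~\ref{obs:notations_P_mu_and_cP}) gives
\begin{align*}
\Big|\int_0^t \partial^\mu_x P_{t-s}[\Psi^{y,k}_{s,t}](x)\,\dd s\Big|\;\le\; p(ct,x-y)\cdot \cI^0_{|\mu|,k}(t).
\end{align*}
Applying Lemma~\ref{lemma:bound_cIs} with $\beta=0$ and $i=|\mu|$ and summing over $k\ge 1$ then reduces matters to estimating
\begin{align*}
\sum_{k\ge 1}\cI^0_{|\mu|,k}(t)\;\le\; K\,t^{-|\mu|/2}\,(AB-1),
\end{align*}
where, with $a:=CMXt^{1/2}$ and $b':=CMYt^{(1-\alpha)/2}$, I set $A:=\sum_{m\ge 0} a^m/(m!)^{1/2}$ and $B:=\sum_{n\ge 0}b'^n/(n!)^{(1-\alpha)/2}$.

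To finish I would combine Lemma~\ref{lemma:bound_on_exp_like_with_beta_power_series} (giving $A\le L e^{La^2}$ and $B\le L e^{Lb'^{2/(1-\alpha)}}$) with the elementary shift-of-index bounds $A-1\le aA$ and $B-1\le b'B$ (both follow from $(m!)^{1/2}\ge ((m-1)!)^{1/2}$). This yields
\begin{align*}
AB-1 \;=\; (A-1)B+(B-1)\;\le\;(a+b')AB\;\le\; 2(a\vee b')\,L^2 e^{L(a^2+b'^{2/(1-\alpha)})}.
\end{align*}
Unfolding $a$ and $b'$ (so that $a^2=(CM)^2 X^2 t$, $b'^{2/(1-\alpha)}=(CM)^{2/(1-\alpha)}Y^{2/(1-\alpha)}t$, and $a\vee b'=CM(Xt^{1/2}\vee Yt^{(1-\alpha)/2})$) produces precisely the right-hand side of~\eqref{eqn:difference_gamma_and_p} after a harmless relabelling of constants.

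The principal delicacy is exactly this extraction of the prefactor $(Xt^{1/2})\vee(Yt^{(1-\alpha)/2})$: bounding $AB$ directly by $L^2 e^{L(a^2+b'^{2/(1-\alpha)})}$ would suffice for an upper estimate on $|\partial^\mu \Gamma_t|$ itself, but would lose the small-$t$ decay that is the whole point of~\eqref{eqn:difference_gamma_and_p}. That decay is essential for the lower bound~\eqref{eqn:heat_kernel_lower_bound}, which will be obtained by perturbing the Gaussian $p(\kappa t, x-y)$ using~\eqref{eqn:difference_gamma_and_p}; hence the identity $AB-1=(A-1)B+(B-1)$ is the critical algebraic step, even though in itself it is trivial.
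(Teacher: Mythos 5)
Your proposal is correct and follows essentially the same route as the paper: bound each term of the series by $p(ct,x-y)\,\cI^0_{|\mu|,k}(t)$, invoke Lemma~\ref{lemma:bound_cIs}, extract the prefactor $\|\Delta_{-1}b\|_{C_tL^\infty}t^{1/2}\vee\|\Delta_{\ge0}b\|_{C_tB^{-\alpha}_{\infty,1}}t^{(1-\alpha)/2}$ by an index shift (your $AB-1=(A-1)B+(B-1)$ with $A-1\le aA$, $B-1\le b'B$ is exactly the paper's shift of $m\mapsto m+1$, $n\mapsto n+1$ in the double sum), and then apply Lemma~\ref{lemma:bound_on_exp_like_with_beta_power_series} to each factor. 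The remaining points (term-by-term differentiation via absolute convergence, reduction to smooth $b$) also match the paper's treatment.
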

\begin{proof}
 
To show both \eqref{eqn:expression_partial_mu_Gamma} and \eqref{eqn:difference_gamma_and_p} it is sufficient to estimate 
the series with the modulus of each term in the series in the right-hand side of \eqref{eqn:expression_partial_mu_Gamma} by the right-hand side of \eqref{eqn:difference_gamma_and_p}.

Let $K,$ $C,M$ be as in Lemma~\ref{lemma:bound_cIs}. 
Again, we will write ``$X$'' and ``$Y$'' instead of ``$\| \Delta_{-1} b \|_{  C_t   L^\infty}$'' and ``$ \| \Delta_{\ge 0} b \|_{  C_t   B_{\infty, 1}^{- \alpha}}$''. 
With $i = |\mu|$
\begin{align*}
& \sum_{k = 1}^{\infty} \int_0^t  \left| \int_{\mathbb{R}^d} \partial^\mu_x p (t - s, x - z) \Psi^{y,k}_{s,t} (z ) \dd z \right| \dd s 
\le \left( \sum_{k = 1}^{\infty} \cI_{i,k}^0(t) \right) p(ct,x-y) \\
& \qquad \le K t^{-\frac{i}{2}}  p (c t, x - y)
 \sum_{\substack{ m,n\in \N_0:\\ m+n \ge 1} } 
\frac{(CM X t^{\frac12} )^m }{( m!)^{\frac{1}{2} } }
\frac{(CM Y  t^{\frac{1-\alpha}{2} } )^n }{(n!)^{\frac{1-\alpha}{2} } }
  \\
& \qquad \le  K t^{-\frac{i}{2}}  p (c t, x - y)  C M  (Xt^{\frac12}   +   Y t^{\frac{1-\alpha}{2}}) \\
& \hspace{3cm} \times
\left( \sum_{ m\in \N_0 } 
\frac{(CM X t^{\frac12} )^m }{( m!)^{\frac{1}{2} } } \right) 
\left(
 \sum_{ n\in \N_0 } 
\frac{(CM Y  t^{\frac{1-\alpha}{2} } )^n }{(n!)^{\frac{1-\alpha}{2} } } 
\right) . 
\end{align*}
Indeed, for $a,b>0$ 
\begin{align*}
\sum_{\substack{ m,n\in \N_0:\\ m+n \ge 1} } 
\frac{a^m }{( m!)^{\frac{1}{2} } }
\frac{b^n }{(n!)^{\frac{1-\alpha}{2} } } 
& \le 
\sum_{  m,n\in \N_0 } 
\frac{a^{m+1} }{( (m+1)!)^{\frac{1}{2} } }
\frac{b^n }{(n!)^{\frac{1-\alpha}{2} } } 
+ 
\sum_{  m,n\in \N_0 } 
\frac{a^m }{( m!)^{\frac{1}{2} } }
\frac{b^{n+1} }{((n+1)!)^{\frac{1-\alpha}{2} } } \\
& 
\le (a+b) 
\sum_{  m,n\in \N_0 } 
\frac{a^{m} }{( m!)^{\frac{1}{2} } }
\frac{b^n }{(n!)^{\frac{1-\alpha}{2} } } .
\end{align*}
Now by applying Lemma~\ref{lemma:bound_on_exp_like_with_beta_power_series}
\begin{calc}
in both cases $z=  C M X t^{ \frac{1}{2}}$, $\beta = \frac{1}{2}$ and 
$z= C M Y  t^{ \frac{1 - \alpha}{2}}$, $\beta = \frac{1 - \alpha}{2}$,
\end{calc}
we obtain the desired bound. 
\end{proof}

\begin{proof}[Proof of the heat-kernel upper bound \eqref{eqn:heat_kernel_upper_bound} of Theorem~\ref{theorem:heat_kernel_bound}]
This is a direct consequence of Lemma~\ref{lemma:difference_gamma_and_p}, as there exists a $K>0$ such that for all $t\ge 0$ 
\begin{align*}
C t (Xt^{\frac12} \vee Y t^{\frac{1-\alpha}{2}}) \le 
  \exp \left( K t [ X^{2}+  Y^{\frac{2}{1 - \alpha}} ]\right) .
\end{align*}
\end{proof}

\section{Heat-kernel lower bounds}
\label{section:heat_kernel_lower_bound_for_smooth_drift}

The lower bound follows from Lemma~\ref{lemma:difference_gamma_and_p} together with the next result, which is a small variation of \cite[Lemma 4.3.8]{St08}.

\begin{lemma}
\label{lemma:bound_on_kernels_that_satisfy_CK}
Let $q_t: \R^d \times \R^d \rightarrow [0,\infty)$ for all $t\in [0,\infty)$. 
Suppose that $(q_t)_{t\in [0,\infty)}$ satisfies the Chapman-Kolmogorov equations, i.e., $q_{t+s}(x,y) = \int_{\R^d} q_t(x,z) q_s(z,y) \dd z$. 
Let $a,b>0$. 
Suppose that $q_t(x,y) \ge b t^{-\frac{d}{2}}$ for all $t\in (0,a]$ and $x,y \in \R^d$ with $|x-y|\le \sqrt{t}$. 
Then there exist a $\kappa \in (0,1)$ and an $M>1$, which only depends on $b$ and $d$, such that for all $t\in [0,\infty)$ and $x,y \in \R^d$
\begin{align*}
q_t(x,y) \ge M^{-1-\frac{t}{a}}  p(\kappa t, x-y). 
\end{align*}
\end{lemma}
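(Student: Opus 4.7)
The strategy is a standard two-step chaining argument: first upgrade the near-diagonal pointwise bound to a full Gaussian lower bound on the regime $t \in (0,a]$ via iterated Chapman-Kolmogorov along the segment from $x$ to $y$, and then bootstrap to arbitrary $t$ by splitting the time interval into at most $\lceil t/a\rceil$ pieces of length $\le a$ and convolving the resulting Gaussian lower bounds.

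\emph{Step 1 (Gaussian bound for $t \in (0,a]$).} Fix $t \in (0,a]$ and $x,y \in \R^d$. Set $N := \max\{1,\lceil 16|x-y|^2/t\rceil\}$, place intermediate points $z_i := x + \tfrac{i}{N}(y-x)$, and choose radius $r := \tfrac14\sqrt{t/N}$. Restricting the $(N-1)$-fold Chapman-Kolmogorov integral to the product of balls $B(z_i,r)$, any tuple $(w_1,\dots,w_{N-1}) \in \prod_i B(z_i,r)$ (with $w_0 := x$, $w_N := y$) satisfies
\[
|w_i - w_{i+1}| \le \frac{|x-y|}{N} + 2r \le \sqrt{t/N}
\]
by the choice of $N$. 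Since $t/N \le t \le a$, the hypothesis yields $q_{t/N}(w_i,w_{i+1}) \ge b(t/N)^{-d/2}$ for each of the $N$ factors, so
\[
q_t(x,y) \;\ge\; b \cdot (c_d\,4^{-d}\,b)^{N-1}\, t^{-d/2}\, N^{d/2},
\]
where $c_d$ is the volume of the unit ball. Setting $\tilde C := c_d 4^{-d} b$ (which we may assume lies in $(0,1)$ after possibly shrinking $b$), the estimate $\tilde C^{N-1} \ge \exp(-16|x-y|^2|\log\tilde C|/t)$ converts the right-hand side into a lower bound of the form $C_1\, p(\kappa t,x-y)$ with $\kappa := (32|\log\tilde C|)^{-1} \in (0,1)$ and $C_1 > 0$ depending only on $b$ and $d$.

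\emph{Step 2 (Extension to all $t$).} For $t > a$ let $n := \lceil t/a\rceil$, so $t/n \le a$. Iterating Chapman-Kolmogorov $n-1$ times and applying Step 1 to each of the $n$ factors,
\[
q_t(x,y) \;\ge\; C_1^n \int_{(\R^d)^{n-1}} \prod_{i=0}^{n-1} p(\kappa t/n,\zeta_{i+1}-\zeta_i)\, \dd\zeta_1 \cdots \dd\zeta_{n-1} \;=\; C_1^n\, p(\kappa t,x-y),
\]
with $\zeta_0 := x$, $\zeta_n := y$, using that the convolution of Gaussians is Gaussian. Taking $M := \max(2,1/C_1) > 1$ and using $n \le 1+t/a$ gives $C_1^n \ge M^{-1-t/a}$. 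The case $t \in (0,a]$ is already covered by Step 1 since $C_1 \ge M^{-1} \ge M^{-1-t/a}$.

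\emph{Main obstacle.} The delicate point is the calibration of Step~1: $N$ must grow linearly in $|x-y|^2/t$ in order for the product $\tilde C^{N-1}$ to reproduce the exponential decay of $p(\kappa t,x-y)$, while the chaining simultaneously requires $|w_i - w_{i+1}| \le \sqrt{t/N}$ at every link so that the hypothesis applies. These two requirements fix the relationship between the near-diagonal constant $b$ and the Gaussian width $\kappa$, and in particular force $\kappa \downarrow 0$ as $b \downarrow 0$, which is why $\kappa$ must be allowed to be strictly less than $1$ in the statement.
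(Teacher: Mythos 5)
Your proof is correct and follows essentially the same route as the paper: the paper handles $t\in(0,a]$ by invoking the first step of the proof of \cite[Lemma 4.3.8]{St08} (which is precisely the chaining argument you write out explicitly), and then extends to $t>a$ by the identical Chapman--Kolmogorov splitting into $n=\lceil t/a\rceil$ pieces and Gaussian convolution. The only difference is that you supply the details the paper cites, and your bookkeeping there (choice of $N\sim 16|x-y|^2/t$, radius $r=\tfrac14\sqrt{t/N}$, and $\kappa$ determined by $|\log(c_d4^{-d}b)|$) is sound, up to the harmless remark that one may shrink $b$ (or rescale $\kappa$ at the cost of a dimensional constant) to ensure $\kappa\in(0,1)$.
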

\begin{proof}
By following the first step of the proof of \cite[Lemma 4.3.8]{St08} we find a $\kappa \in (0,1)$ and a $M>1$ which depend only on $b$ and $d$ such that for all $t\in (0,a]$ and $x,y \in \R^d$ 
\begin{align*}
q_t(x,y) \ge
 M^{-1} p(\kappa t, x-y). 
\end{align*}
Let $t>a$ and $n = \lceil \frac{t}{a} \rceil$. Then for all $x,y \in R^d$
\begin{align*}
q_t(x,y) 
& = \int_{(\R^d)^{n-1}} q_{\frac{t}{n}}(x,z_1) q_{\frac{t}{n}}(z_1,z_2) \cdots q_{\frac{t}{n}}(z_{n-1},y) \dd z \\
& \ge 
\int_{(\R^d)^{n-1}}  M^{-n} p (\kappa \tfrac{t}{n}, x -z_1) p (\kappa \tfrac{t}{n}, z_1 -z_2)\cdots p (\kappa \tfrac{t}{n}, z_{n-1} -y ) \dd z \\
& \ge M^{-1-\frac{t}{a}} p(\kappa t, x-y). 
\end{align*}
\end{proof}

Now we can prove the heat-kernel lower bounds: 

\begin{proof}[Proof of the heat-kernel lower bound \eqref{eqn:heat_kernel_lower_bound} of Theorem~\ref{theorem:heat_kernel_bound}]
We want to apply Lemma~\ref{lemma:bound_on_kernels_that_satisfy_CK}. 
Therefore we will find an $a$ such that the condition is satisfied.
Once more we will write ``$X$'' and ``$Y$'' instead of ``$\| \Delta_{-1} b \|_{  C_t   L^\infty}$'' and ``$ \| \Delta_{\ge 0} b \|_{  C_t   B_{\infty, 1}^{- \alpha}}$''. 
Let us also take $X=\| \Delta_{-1} b \|_{  C_t    L^\infty}$ and $Y=  \| \Delta_{\ge 0} b \|_{  C_t   B_{\infty, 1}^{- \alpha}}$. 
Let $\alpha \in (0,\frac12)$, $c > 1$ and $C>0$ be as in Lemma~\ref{lemma:difference_gamma_and_p}. 
Then~\eqref{eqn:difference_gamma_and_p} gives for $a>0$, $t\in (0,a]$ and $x,y \in \R^d$ with $|x-y| \le \sqrt{t}$: 
\begin{align*}
& \Gamma_t(x,y) 
 \ge p(t,x-y) - C  (Xt^{\frac12} \vee Y t^{\frac{1-\alpha}{2}})
  \exp \left( C t [  X^{2}+  Y^{\frac{2}{1 - \alpha}} ]\right) 
    p (c t, x - y) \\
	& \ge (2\pi t)^{-\frac{d}{2}} e^{-\frac12} - C  ( (X^2a)^{\frac12} \vee (Y^{\frac{2}{1-\alpha}} a)^{\frac{1-\alpha}{2}})  \exp \left( C a [X^2 + Y^{\frac{2}{1 - \alpha}}] \right) c^{-\frac{d}{2}} (2\pi t)^{-\frac{d}{2}}.
\end{align*}
Therefore,  it holds that $\Gamma_t(x,y) \ge \tfrac12 (2\pi t)^{-\frac{d}{2}} e^{-\frac12}$ if
\begin{align*}
C ( (X^2a)^{\frac12} \vee (Y^{\frac{2}{1-\alpha}} a)^{\frac{1-\alpha}{2}})  \exp \left( C a [X^2 + Y^{\frac{2}{1 - \alpha}}] \right) c^{-\frac{d}{2}} \le \frac{e^{-\frac12}}{2}. 
\end{align*}
Hence there exists a $K\in (0,1)$ (which only depends on $c,C$ and $\alpha$)  
such that the choice $a = K [X^2 + Y^{\frac{2}{1 - \alpha}}]^{-1}$  works.
\begin{calc}
For this $a$ we have (as $K\in (0,1)$ we use that $K \le K^{1-\alpha}$)
\begin{align*}
& C ( (X^2a)^{\frac12} \vee (Y^{\frac{2}{1-\alpha}} a)^{\frac{1-\alpha}{2}})  \exp \left( C a [X^2 + Y^{\frac{2}{1 - \alpha}}] \right) c^{-\frac{d}{2}} \\
& \le 
C  K^{\frac{1-\alpha}{2}}  \exp \left( C K \right) c^{-\frac{d}{2}}. 
\end{align*}
\end{calc}
So by Lemma~\ref{lemma:bound_on_kernels_that_satisfy_CK} there exist a $\kappa \in (0,1)$ and a $M > 1$ such that for all $t\in [0,\infty)$ and $x,y\in \R^d$, 
\begin{align*}
\Gamma_t(x,y) 
&\ge 
 M^{-1-\frac{t}{a}}  p(\kappa t, x-y) = \frac1M
\exp\bigg(- t \frac{\log M}{K  } [X^2 + Y^{\frac{2}{1 - \alpha}}]  \bigg)  p(\kappa t, x-y). 
\end{align*}
This proves that \eqref{eqn:heat_kernel_lower_bound} holds for a large enough $C$. 
\end{proof}

\section{Proof of Corollary~\ref{corollary:escape_out_of_box_probability}}
\label{section:probability_of_escaping_the_box}

As before, we consider $b\in C([0,T],B_{\infty,1}^{-\alpha})$ for some $\alpha \in (0,\frac12)$ and we let $X= (X_t)_{t\in [0,T]}$ be the  solution to the martingale problem for $(  (\cL_t)_{t \in (0,T]}  , \delta_x)$. 
We prove Corollary~\ref{corollary:escape_out_of_box_probability}, which means that we  estimate the probability that $X$ escapes a box of size $K$ before time $T$. 
The estimate is a consequence of our heat-kernel estimates (Theorem~\ref{theorem:heat_kernel_bound}), Markov's inequality and the Garsia-Rademich-Rumsey inequality. 
By the latter (see \begin{calc}
  Theorem~\ref{theorem:garsia-rademich-rumsey} or
  \end{calc} \cite[Theorem 2.1.3]{StVaSr06}) we have for $\kappa>0$
  \begin{align} 
  \label{eqn:difference_X_t_X_s_est_by_garsia_R_R}
 \kappa | X_t - X_s | \le 4 \int_0^{t - s} u^{- \frac{1}{2}} \sqrt{\log
     \left( 1 + \frac{4 (F_{T,\kappa} - T^2)}{u^2} \right)} \dd u, 
  \end{align}
where 
  \begin{align}
\label{eqn:F_integral_def}
   F_{T,\kappa} = \int_0^T \int_0^T \exp \bigg( \kappa \bigg( \frac{|
     X_{r_2} - X_{r_1} |}{| r_2 - r_1 |^{\frac{1}{2}}} \bigg)^2 \bigg)
     \dd r_1 \dd r_2 .
 \end{align}

\begin{calc}
Indeed, with $\Psi(t) = e^{t^2} -1$ (so that $\Psi^{-1}(s) = \sqrt{\log(1+s)}$), $p(t) =  \sqrt{t}$ (so that $p(\dd u) = \frac{1}{2} u^{-\frac12}$) and $\phi = \kappa X$, 
\begin{align*}
B:= \int_0^T \int_0^T \Psi \left( \frac{|\phi(t) -\phi(s)|}{p(|t-s|)}\right)  \dd s \dd t
= F_{T,\kappa} - T^2 ,
\end{align*}
so that 
\begin{align*}
\kappa | X_t - X_s | \le 
 8 \int_0^{t-s} \Psi^{-1} \left(\frac{4}{u^2} B \right)  p( \dd u) 
\end{align*}
gives \eqref{eqn:difference_X_t_X_s_est_by_garsia_R_R} by Theorem~\ref{theorem:garsia-rademich-rumsey}. 
\end{calc}

\begin{calc}
\begin{theorem}[Garsia-Rademich-Rumsey inequality] \cite[Theorem 2.1.3]{StVaSr06}
\label{theorem:garsia-rademich-rumsey}
Let $p$ and $\Psi$ be continuous and strictly continuous functions on $[0,\infty)$ such that 
\begin{align*}
p(0)=\Psi(0)=0, \qquad \lim_{t\rightarrow \infty} \Psi(t) = \infty.
\end{align*}
Let $T>0$ and $\phi \in C([0,T], \R^d)$. 
Then for $0\le s < t \le T$ 
\begin{align*}
|\phi(t) - \phi(s) | \le 8 \int_0^{t-s} \Psi^{-1} \left(\frac{4}{u^2} \int_0^T \int_0^T \Psi \left( \frac{|\phi(t) -\phi(s)|}{p(|t-s|)}\right)  \dd s \dd t \right) p( \dd u) .
\end{align*}
\end{theorem}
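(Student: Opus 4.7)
This is the classical Garsia-Rademich-Rumsey inequality, and I would follow the standard dyadic-selection argument. Write
\[
B := \int_0^T\!\!\int_0^T \Psi\!\left(\frac{|\phi(t)-\phi(s)|}{p(|t-s|)}\right)\dd s\,\dd t,
\qquad
J(\tau) := \int_0^T \Psi\!\left(\frac{|\phi(\tau)-\phi(\sigma)|}{p(|\tau-\sigma|)}\right)\dd\sigma,
\]
so that $\int_0^T J(\tau)\,\dd\tau = B$. The claim is trivial if $B=\infty$, so I assume $B<\infty$.

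Fix $0\le s<t\le T$ and put $d_0:=t-s$. Since $p$ is continuous, strictly increasing and $p(0)=0$, I can inductively define $d_{n+1}\in(0,d_n)$ by $p(d_{n+1})=p(d_n)/2$; then $p(d_n)=p(d_0)/2^n\to 0$, hence $d_n\downarrow 0$. The resulting key identity $p(d_n)=2(p(d_n)-p(d_{n+1}))=2\int_{d_{n+1}}^{d_n}p(\dd u)$ will convert sums over $n$ into the target integral $\int p(\dd u)$ without imposing any growth condition on $p$. I then construct $t=t_0>t_1>\cdots$ with $t_n-s\in(d_{n+1},d_n]$ by a two-constraint Chebyshev argument: on a window $W_n\subset(s,s+d_n)$ of length comparable to $d_n-d_{n+1}$, both of the sets
\[
\bigl\{\tau\in W_n:\,J(\tau)\le C_1 B/d_n\bigr\},\qquad \bigl\{\tau\in W_n:\,\Psi\bigl(|\phi(t_n)-\phi(\tau)|/p(t_n-\tau)\bigr)\le C_2 J(t_n)/d_n\bigr\}
\]
have measure strictly greater than $|W_n|/2$, so they intersect, and I pick $t_{n+1}$ in the intersection. (A small limiting step is needed for $n=0$ since $t_0=t$ need not lie in the good set of the first constraint; one picks $t_0$ close to $t$ in the good set and passes to the limit, using continuity of $\phi$.)

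Combining the two constraints iteratively gives $|\phi(t_n)-\phi(t_{n+1})|\le p(d_n)\,\Psi^{-1}(CB/d_n^2)$ for a universal constant $C$. Since $\Psi^{-1}$ is non-decreasing, $\Psi^{-1}(CB/d_n^2)\le \Psi^{-1}(CB/u^2)$ for every $u\in[d_{n+1},d_n]$, and the telescoping identity yields $p(d_n)\le 2\int_{d_{n+1}}^{d_n}p(\dd u)$. Summing and using continuity of $\phi$ (so that $\phi(t_n)\to\phi(s)$ as $t_n\downarrow s$),
\[
|\phi(t)-\phi(s)|\le \sum_{n=0}^\infty|\phi(t_n)-\phi(t_{n+1})|\le 2\int_0^{d_0}\Psi^{-1}\!\bigl(CB/u^2\bigr)\,p(\dd u).
\]
Careful tracking of the constants through the Chebyshev arguments and through the precise choice of window $W_n$ then yields the exact factor $8$ and the exact argument $4B/u^2$ stated in the theorem.

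\textbf{Main obstacle.} The delicate step is the simultaneous selection of $t_{n+1}$: the window $W_n$ must be small enough that the two ``good'' sets have measure lower bounds whose sum exceeds $|W_n|$ (ensuring non-empty intersection), yet placed so that the sequence $(d_n)$ still decreases to $0$. The choice $p(d_{n+1})=p(d_n)/2$, rather than the naive geometric $d_{n+1}=d_n/2$, is what makes the argument work for \emph{any} continuous strictly increasing $p$: it replaces any growth or doubling hypothesis on $p$ by the self-scaling identity $p(d_n)=2\int_{d_{n+1}}^{d_n}p(\dd u)$. Recovering the explicit constants $8$ and $4$ is then a bookkeeping matter, but it requires keeping track of every factor of $2$ lost in each Chebyshev step and each $\Psi^{-1}$ comparison.
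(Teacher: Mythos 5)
First, a remark on the comparison: the paper does not prove this statement at all --- it is quoted as \cite[Theorem 2.1.3]{StVaSr06} and used as a black box --- so the only question is whether your sketch is a correct proof of the classical Garsia--Rodemich--Rumsey inequality. Your high-level strategy is the standard one: a chain $t=t_0>t_1>\cdots\to s$ built by a double Chebyshev selection, $p$-dyadic scales $d_n$, the telescoping identity $p(d_n)\le 2\int_{d_{n+1}}^{d_n}p(\dd u)$, and monotonicity of $\Psi^{-1}$. The limiting step you flag for $n=0$ is a legitimate way to start the chain (Stroock--Varadhan instead pick an intermediate $t_0$ with $J(t_0)\le B/T$ and run two chains, one towards each endpoint, which is where $8=4+4$ comes from).

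There is, however, a genuine gap in the selection step as you describe it, and it is exactly at the point you call ``the delicate step''. You fix the scales a priori by $p(d_{n+1})=p(d_n)/2$ and then insist on $t_{n+1}-s\in(d_{n+1},d_n]$, which forces the selection window $W_n$ to have length of order $d_n-d_{n+1}$. Chebyshev only gives $|\{\tau: J(\tau)>\lambda\}|\le B/\lambda$, so to make the bad set smaller than $|W_n|/2$ you must take $\lambda\gtrsim 2B/(d_n-d_{n+1})$ --- and $d_n-d_{n+1}$ can be an arbitrarily small fraction of $d_n$: for $p(u)=u^N$ one has $d_{n+1}=2^{-1/N}d_n$, so $d_n-d_{n+1}\approx (\ln 2/N)\,d_n$. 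Your constants $C_1,C_2$ therefore depend on $p$, and so would the constant inside $\Psi^{-1}$; the claim that ``bookkeeping'' recovers the universal $8$ and $4B/u^2$ is not substantiated by this construction. Note moreover that the lower bound $t_{n+1}-s>d_{n+1}$ is never actually used in your telescoping, so the annulus window buys nothing. The standard repair is to define the scales adaptively and select from the \emph{full} interval: given $t_{n-1}$, set $p(d_n)=\tfrac12 p(t_{n-1}-s)$ and choose $t_n\in(s,s+d_n)$ with $J(t_n)\le 2B/d_n$ and $\Psi\bigl(|\phi(t_{n-1})-\phi(t_n)|/p(t_{n-1}-t_n)\bigr)\le 2J(t_{n-1})/d_n\le 4B/(d_{n-1}d_n)\le 4B/u^2$ for all $u\le d_n$; since each bad set has measure $<d_n/2$ inside an interval of length $d_n$, the intersection is nonempty with \emph{universal} thresholds, and $p(t_{n-1}-t_n)\le p(t_{n-1}-s)=2p(d_n)\le 4\int_{d_{n+1}}^{d_n}p(\dd u)$ closes the telescoping with the stated constants.
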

\end{calc}

In the proof of Corollary~\ref{corollary:expectation_of_exponential_of_sup} we will bound the right-hand side of \eqref{eqn:difference_X_t_X_s_est_by_garsia_R_R} in terms of a function $\zeta$. 
In the next lemma we start by gathering some auxiliary facts about $\zeta$. 
\begin{lemma}
\label{lemma:about_zeta_psi}
Let $\zeta, \psi \colon (0,\infty) \rightarrow (0,\infty)$ be given by
  \begin{align*} 
  \zeta (r) := \int_0^r u^{- \frac{1}{2}} \left( \sqrt{\log (1 + u^{- 2})}
     \vee 1 \right) \dd u, \qquad \psi (r) := r^{\frac{1}{2}} 
     \sqrt{(\log (\tfrac{1}{r}) \vee 1)}  . 
     \end{align*}
  There exist $m, M > 0$ such that $m \zeta (r) \le \psi (r) \le M \zeta (r) $ for all $r>0$. 
  Moreover, $\psi (r s) \le \sqrt{2} \psi (r) \psi (s)$ for all $r, s >0$ and $\psi$ is strictly increasing. 
\end{lemma}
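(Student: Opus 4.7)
The plan is to treat the three claims separately. I expect the first (the two-sided comparison $m\zeta \le \psi \le M\zeta$) to be the main technical step, and the other two to be short computations/case checks.

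For the comparison, note that both $\psi$ and $\zeta$ are continuous and strictly positive on $(0,\infty)$, so by continuity it suffices to check the limits of $\psi(r)/\zeta(r)$ at $0$ and $\infty$ and verify they are finite and positive; compactness then delivers the two-sided bound on any interval $[\epsilon,R]$ and the limit statements handle the complements. I would apply L'Hôpital's rule. We have
\begin{align*}
\zeta'(r) = r^{-1/2}\bigl(\sqrt{\log(1+r^{-2})}\vee 1\bigr),\qquad
\bigl(\psi(r)^2\bigr)' = \bigl(r(\log(1/r)\vee 1)\bigr)',
\end{align*}
so $2\psi(r)\psi'(r)$ is easy to compute on the two regions $\{r<1/e\}$ and $\{r>1/e\}$ separately. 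As $r\downarrow 0$ one has $\log(1+r^{-2})\sim 2\log(1/r)$, hence $\zeta'(r)\sim r^{-1/2}\sqrt{2\log(1/r)}$, while $\psi'(r)\sim \tfrac12 r^{-1/2}\sqrt{\log(1/r)}$; the ratio tends to $1/(2\sqrt 2)$. As $r\to\infty$ one has $\log(1+r^{-2})<1$, so $\zeta'(r)=r^{-1/2}$ while $\psi'(r)=\tfrac12 r^{-1/2}$; the ratio tends to $1/2$. Since $\zeta(r)\to 0$ as $r\to 0$ and $\zeta(r)\to\infty$ as $r\to\infty$ (the integrand is integrable near $0$ but behaves like $u^{-1/2}$ at infinity), L'Hôpital applies at both ends and yields finite positive limits of $\psi/\zeta$, giving the required constants $m,M$.

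For sub-multiplicativity, I square: it is enough to show
\begin{align*}
\log(1/(rs))\vee 1 \le 2\bigl(\log(1/r)\vee 1\bigr)\bigl(\log(1/s)\vee 1\bigr).
\end{align*}
If $rs\ge 1$ the left side is $1$ and the right side is at least $2$. If $rs<1$, write $a=\log(1/r)\vee 1$, $b=\log(1/s)\vee 1$. Then $\log(1/(rs))=\log(1/r)+\log(1/s)\le a+b$, and one checks $a+b\le 2ab$ whenever $a,b\ge 1$ (since $2ab-a-b = (a-1)(b-1)+(ab-1)\ge 0$). Also $1\le 2ab$. Taking square roots yields $\psi(rs)\le\sqrt 2\,\psi(r)\psi(s)$.

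For strict monotonicity, split at $r=1/e$. On $[1/e,\infty)$ we have $\psi(r)=r^{1/2}$, strictly increasing. On $(0,1/e)$ we have $\psi(r)^2 = r\log(1/r)$, whose derivative is $\log(1/r)-1>0$ precisely for $r<1/e$, so $\psi^2$ (and hence $\psi$) is strictly increasing there. The two formulas agree at $r=1/e$, giving $\psi(1/e)=e^{-1/2}$, so $\psi$ is continuous and strictly increasing on all of $(0,\infty)$.

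The only place that requires some care is the L'Hôpital step for the comparison: one must check that $\zeta$ and $\psi$ both vanish at $0$ and diverge at $\infty$ (so that the rule applies in both forms), and one should compute $\psi'$ on the piece $r<1/e$ carefully to confirm the leading-order asymptotic $\psi'(r)\sim\tfrac12 r^{-1/2}\sqrt{\log(1/r)}$; everything else is mechanical.
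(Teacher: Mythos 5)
Your proposal is correct and follows essentially the same route as the paper: reduce the two-sided comparison to the limits of $\psi/\zeta$ at $0$ and $\infty$ (the paper uses L'H\^opital at $0$ and a direct computation of $\zeta$ for large $r$, which is a trivial variant of your L'H\^opital step at $\infty$), prove sub-multiplicativity via the same inequality $(x+y)\vee 1\le (x\vee 1)+(y\vee 1)\le 2(x\vee 1)(y\vee 1)$, and check monotonicity by splitting at $r=1/e$ and differentiating. No gaps; your asymptotics $\psi/\zeta\to 1/(2\sqrt2)$ at $0$ and $\to 1/2$ at $\infty$ agree with the paper's values $\zeta/\psi\to 2\sqrt 2$ and $2$.
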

\begin{proof}
That $\psi$ is strictly increasing on $(\frac{1}{e},\infty)$ will be clear, whereas on $[0,\frac{1}{e})$ it follows by calculating its derivative. 
Since $\psi $ and $\zeta $ are continuous and bounded away from $0$ and $\infty$ on compact subintervals of $(0, \infty)$, the existence of such $m$ and $M$ follows once we show that $\lim_{r\rightarrow 0} \frac{\zeta (r)}{\psi (r)}$ and $\lim_{r\rightarrow \infty} \frac{\zeta (r)}{\psi (r)}$ exist and are in $(0,\infty)$.  By applying L'Hospital's rule we obtain 
  \begin{align*}
   \lim_{r \rightarrow 0} \frac{\zeta (r)}{\psi (r)} = \lim_{r \rightarrow
     0} \frac{\int_0^r u^{- \frac{1}{2}} \sqrt{\log (1 + u^{- 2})} \dd u}{r^{\frac{1}{2}} \sqrt{\log (\tfrac{1}{r})}} \in (0, \infty).
      \end{align*}
\begin{calc}
Indeed
\begin{align*}
&  \lim_{r\rightarrow 0}  
 \frac{\int_0^r u^{- \frac{1}{2}} \sqrt{\log (1 + u^{- 2})} \dd u}{r^{\frac12} \sqrt{\log (\tfrac{1}{r})}}  \\
&  =  \lim_{r \rightarrow 0} \frac{ r^{- \frac{1}{2}} \sqrt{\log (1 + r^{- 2})} }{\frac12 r^{-\frac12} \sqrt{\log (\tfrac{1}{r})} - \frac12 r^{\frac{1}{2}} \log (\tfrac{1}{r})^{-\frac12}  r^{-1} } 
 =  \lim_{r \rightarrow 0} \frac{  \sqrt{\log (1 + r^{- 2})} }{\frac12 \sqrt{\log (\tfrac{1}{r})} - \frac12  \log (\tfrac{1}{r})^{-\frac12}  } , \\
 & =  \lim_{a \rightarrow \infty} 2\frac{  \sqrt{\log (1 + a^2)} }{ \sqrt{\log a} -   \log (a)^{-\frac12}  } 
 =  2 \sqrt{  \lim_{a \rightarrow \infty} \frac{  \log (1 + a^2) }{ \log a } }
  =  2 \sqrt{  \lim_{a \rightarrow \infty} \frac{2a^2}{ 1 + a^2 } } = 2 \sqrt 2. 
\end{align*}
\end{calc}     
And also for $r\to \infty$ we have
  \begin{align*} 
  \lim_{r \rightarrow \infty} \frac{\zeta (r)}{\psi (r)} 
& = \lim_{r      \rightarrow \infty} \frac{\int_0^{\sqrt{e - 1}} u^{- \frac{1}{2}} \sqrt{\log (1 + u^{- 2})} \dd u + \int_{\sqrt{e - 1}}^r u^{- \frac{1}{2}} \dd u}{r^{\frac12}} 
\cnewline \cand \begin{calc}
= \lim_{r      \rightarrow \infty} r^{-\frac12} \int_{\sqrt{e - 1}}^r u^{- \frac{1}{2}} \dd u 
= \lim_{r      \rightarrow \infty} r^{-\frac12} 2[\sqrt{r} - \sqrt{\sqrt{e-1}} ] =2
\end{calc}
 \in (0, \infty) . 
     \end{align*}
Furthermore
  \begin{align*}
   \psi (r s) = (r s)^{\frac{1}{2}} \left( \sqrt{(\log (\tfrac{1}{r}) + \log (\tfrac{1}{s}))
     \vee 1} \right) 
     \end{align*}
  and for all $x, y \in \mathbb{R}$ we have $(x + y) \vee 1 \le x \vee 1 + y \vee 1 \le 2 (x
  \vee 1) (y \vee 1)$. 
  Therefore,
  \begin{align*} 
  \psi (r s) 
  \le \sqrt 2 (r s)^{\frac{1}{2}} \left( \sqrt{\log (\tfrac{1}{r}) \vee 1}
     \right) \left( \sqrt{\log (\tfrac{1}{s}) \vee 1} \right) 
   = \sqrt 2 \psi (r) \psi (s) .
  \end{align*}
\end{proof}

\begin{corollary}
\label{corollary:expectation_of_exponential_of_sup}
Let $\psi$ be as in Lemma~\ref{lemma:about_zeta_psi}
and let $C>0$ be as in Theorem~\ref{theorem:heat_kernel_bound}. 
Then there exists an $M > 0$ such that for all $T \ge 1$ \tonote{in previous versions we put another factor in front of the exp, like $T^2 \wedge \|b\|^{-\frac{4}{1-\alpha}}$} 
  \begin{align} 
\notag 
&  \mathbb{E}_x \bigg[ \exp \bigg( \frac{1}{M} \bigg( \sup_{ \substack{s,t\in [0,T] \\ s<t }} \frac{| X_t - X_s |}{\psi (t - s)} \bigg)^2  \bigg) \bigg] \\
\label{eqn:expectation_of_exponential_of_sup}
&   \le M \exp \Big( C T \Big[  \| \Delta_{-1} b \|_{  C_T   L^\infty}^{2}  + \| \Delta_{\ge 0} b \|_{  C_T   B_{\infty, 1}^{- \alpha}}^{\frac{2}{1 - \alpha}}  \Big] \Big). 
 \end{align}
\end{corollary}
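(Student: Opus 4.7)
The plan is to first apply the Garsia--Rademich--Rumsey inequality \eqref{eqn:difference_X_t_X_s_est_by_garsia_R_R} to convert the pointwise supremum into a quantity controlled by $F_{T,\kappa}$, then invoke Lemma~\ref{lemma:about_zeta_psi} to replace the resulting $\zeta$-integral by $\psi$, so that the supremum on the left of \eqref{eqn:expectation_of_exponential_of_sup} is bounded by a multiple of $\sqrt{\log F_{T,\kappa}}$. Exponentiating then turns $\exp(\tfrac{1}{M}\sup^2)$ into (roughly) $F_{T,\kappa}^{K/M}$, and choosing $M$ large enough we finally use Fubini together with the heat-kernel upper bound from Theorem~\ref{theorem:heat_kernel_bound} to compute $\E_x[F_{T,\kappa}]$.

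Concretely, fix $\kappa\in(0,\tfrac{1}{4c})$ (with $c>1$ the constant in Theorem~\ref{theorem:heat_kernel_bound}). From \eqref{eqn:difference_X_t_X_s_est_by_garsia_R_R}, setting $A = 4(F_{T,\kappa}-T^2)\vee 1$, the substitution $u=w\sqrt{A}$ gives
\begin{align*}
\kappa\,|X_t-X_s| \le 4\,A^{1/4}\int_0^{(t-s)/\sqrt{A}} w^{-1/2}\bigl(\sqrt{\log(1+w^{-2})}\vee 1\bigr)\dd w
= 4\,A^{1/4}\zeta\bigl((t-s)/\sqrt{A}\bigr).
\end{align*}
By Lemma~\ref{lemma:about_zeta_psi}, $\zeta\le M\psi$ and $\psi$ is sub-multiplicative up to $\sqrt{2}$; since $\psi(A^{-1/2}) = A^{-1/4}\sqrt{\tfrac12\log A\vee 1}$, we get
\begin{align*}
\kappa\,|X_t-X_s| \le C_1\,\psi(t-s)\sqrt{\log A \vee 1} \le C_2\,\psi(t-s)\sqrt{\log(F_{T,\kappa}+e)},
\end{align*}
uniformly in $0\le s<t\le T$. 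Squaring, taking the supremum and exponentiating yields
\begin{align*}
\exp\Bigl(\tfrac{1}{M}\bigl(\sup_{s<t}\tfrac{|X_t-X_s|}{\psi(t-s)}\bigr)^2\Bigr)
\le (F_{T,\kappa}+e)^{C_3/(M\kappa^2)}.
\end{align*}
Choose $M$ large enough that $C_3/(M\kappa^2)\le 1$; since $F_{T,\kappa}\ge T^2\ge 1$, the right-hand side is bounded by $2(F_{T,\kappa}+e)$.

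It remains to estimate $\E_x[F_{T,\kappa}]$. By Fubini and the strong Markov property,
\begin{align*}
\E_x[F_{T,\kappa}] = \int_0^T\!\!\int_0^T \E_x\Bigl[\exp\Bigl(\tfrac{\kappa\,|X_{r_2}-X_{r_1}|^2}{|r_2-r_1|}\Bigr)\Bigr]\dd r_1\dd r_2.
\end{align*}
Writing the integrand as a double integral against $\Gamma_{0,r_1}(x,x_1)\Gamma_{r_1,r_2}(x_1,x_2)$ (by Proposition~\ref{proposition:transition_density} and Remark~\ref{remark:continuity_drift_and_prob_with_other_starting_time}) and applying the upper bound \eqref{eqn:heat_kernel_upper_bound} to $\Gamma_{r_1,r_2}$ yields an inner Gaussian integral of the form $\int \exp(\kappa|z|^2/(r_2-r_1))\,p(c(r_2-r_1),z)\dd z$, which equals $(1-2c\kappa)^{-d/2}$ by a direct computation (this is where we need $\kappa<1/(2c)$). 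The factor $\int \Gamma_{0,r_1}(x,x_1)\dd x_1=1$ integrates to $1$, leaving
\begin{align*}
\E_x[F_{T,\kappa}] \le T^2\,(1-2c\kappa)^{-d/2} C\exp\Bigl(CT\bigl[\|\Delta_{-1}b\|_{C_TL^\infty}^2+\|\Delta_{\ge 0}b\|_{C_TB_{\infty,1}^{-\alpha}}^{2/(1-\alpha)}\bigr]\Bigr).
\end{align*}
Since $T\ge 1$, the factor $T^2$ can be absorbed into the exponential after enlarging the constant $C$, giving \eqref{eqn:expectation_of_exponential_of_sup}.

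The main technical issue is the interplay between the parameter $\kappa$ inside $F_{T,\kappa}$ and the requirement $\kappa<1/(2c)$ coming from Gaussian integrability of the inner integral; since $\kappa$ appears both in the GRR output (as $\kappa^{-1}$) and in the integrability constraint, one fixes $\kappa=1/(4c)$ once and the constant $M$ then depends only on $c,d,\alpha$ and the constant $C$ from Theorem~\ref{theorem:heat_kernel_bound}. A minor bookkeeping issue is the case $F_{T,\kappa}-T^2<1/4$, handled by the $\vee\,1$ in the definition of $\zeta$.
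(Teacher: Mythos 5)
Your proposal follows essentially the same route as the paper: the Garsia--Rademich--Rumsey bound \eqref{eqn:difference_X_t_X_s_est_by_garsia_R_R}, Lemma~\ref{lemma:about_zeta_psi} to replace $\zeta$ by $\psi$ and extract a factor $\sqrt{\log}$ of the GRR functional, and then Fubini, the Markov property, the upper bound \eqref{eqn:heat_kernel_upper_bound} and the Gaussian identity \eqref{eqn:gaussian_calculation} with $\kappa<\tfrac{1}{2c}$ to control $\E_x[F_{T,\kappa}]$. Your cosmetic deviations (bounding $\exp(\tfrac1M\sup^2)$ by $(F_{T,\kappa}+e)^{C_3/(M\kappa^2)}$ and using linearity of expectation, instead of the paper's $G_{T,\kappa}=2\sqrt{F_{T,\kappa}\vee 4}$ and Jensen) are immaterial.

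The one step you gloss over is the final time integral. You bound $\exp(C|r_2-r_1|B)\le\exp(CTB)$, where $B:=\|\Delta_{-1}b\|_{C_TL^\infty}^2+\|\Delta_{\ge0}b\|_{C_TB_{\infty,1}^{-\alpha}}^{2/(1-\alpha)}$, pull it out of the double integral and then claim the resulting prefactor $T^2$ ``can be absorbed into the exponential after enlarging $C$ since $T\ge1$''. That absorption only works if $B$ is bounded away from zero (then $T^2\le e^{2T}\le e^{(2/\epsilon)TB}$ for $B\ge\epsilon$); for small or vanishing $B$ a factor $T^2$ cannot be dominated by a constant. The paper instead keeps the $|r_2-r_1|$-dependence and integrates exactly, obtaining $\int_0^T\int_0^T e^{A|r_2-r_1|}\dd r_1\dd r_2\lesssim A^{-2}e^{AT}$ with $A=CB$, which trades your $T^2$ for $A^{-2}$ --- but it then also restricts to $A\ge1$, i.e.\ the same implicit lower bound on the drift norms (this is precisely the issue behind the authors' own remark about an extra prefactor of the form $T^2\wedge\|b\|^{-4/(1-\alpha)}$). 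So your argument is on par with the paper's; just make the restriction explicit (or retain the $A^{-2}$ gain from the exact integration) rather than asserting an unconditional absorption of $T^2$.
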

\begin{proof}
The proof is inspired by \cite[Corollary A.5]{FrVi10}. Unfortunately we cannot directly apply that result, because the constant they derive depends on the time interval $[0,T]$ (even though this is not explicitly stated).

Let us define $G_{T,\kappa}: = 2 \sqrt{F_{T,\kappa} \vee 4}$, where $F_{T,\kappa}$ is as in \eqref{eqn:F_integral_def}. 
Let $\zeta$ be as in Lemma~\ref{lemma:about_zeta_psi}. 
By \eqref{eqn:difference_X_t_X_s_est_by_garsia_R_R} and using $4(F_{T,\kappa}-T^2)\le G_{T,\kappa}^2$ we have by a substitution and by Lemma~\ref{lemma:about_zeta_psi} (observe that $G_{T,\kappa} \ge 4 \ge e$) that for $T\ge 1$, $\kappa>0$, $s,t\in [0,T]$ with $s<t$ and by writing $G= G_{T,\kappa}$
\begin{align*}
\kappa  | X_t - X_s | 
  \cand
  \begin{calc} \le 4 \int_0^{t - s} u^{- \frac{1}{2}} \sqrt{\log
     \left( 1 + \frac{G^2}{u^2} \right)} \dd u \end{calc} \cnewline 
   & \le 4 \sqrt{G} \int_0^{\frac{t-s}{G}} u^{- \frac{1}{2}} \sqrt{\log
     \left( 1 + \tfrac{1}{u^2} \right)} \dd u 
   \lesssim \sqrt{G} \zeta( \tfrac{t-s}{G}) \\
& \lesssim \sqrt{G} \psi( \tfrac{t-s}{G}) 
 \lesssim \sqrt{G} \psi( t-s) \psi(\tfrac{1}{G})
 \lesssim \psi( t-s)  \sqrt{\log G}. 
\end{align*}
Let $M>0$ be such that $ \kappa  | X_t - X_s |  \le \sqrt{M} \psi( t-s)  \sqrt{\log G_{T,\kappa}}$ for all $T\ge 1$, $\kappa >0$ and $s,t\in [0,T]$ with $s<t$. 
Then 
\begin{align*}
\E_x \left[  \exp \left( \frac{\kappa^2}{M} \left( \sup_{ \substack{s,t\in [0,T] \\ s<t }}
     \frac{| X_t - X_s |}{\psi (t - s)} \right)^2 \right) \right]
\le \E_x[ G_{T,\kappa}]. 
\end{align*}
As by Jensen's inequality $\E_x[G_{T,\kappa}] = 2\E_x[\sqrt{F_{T,\kappa}\vee 4}] \le 2 \sqrt{\E_x[F_{T,\kappa}]+4}$ we will obtain a bound of $\E_x[G_{T,\kappa}]$, by estimating $\E_x[F_{T,\kappa}]$.
Let $c \in (0,1)$ and $\kappa>0$ be such that $\kappa < \frac{1}{2c}$. Then for all $r_2, r_1>0$ with $r_2 \ne r_1$
  \begin{align}
  \label{eqn:gaussian_calculation}
  \int_{\mathbb{R}^d} p(c|r_2 - r_1|, y )\exp (  \kappa \Big( \frac{| y  |}{| r_2 - r_1 |^{\frac{1}{2}}} \Big)^2 ) \dd y =(\tfrac{1}{1-2c\kappa  })^{\frac{d}{2}} < \infty.
  \end{align}
\begin{calc}
\begin{align*}
& \int_{\R^d} p(c t, y) \exp( \kappa \frac{|y|^2}{t} ) \dd y  
= (2\pi ct)^{-\frac{d}{2}} \int_{\R^d}  \exp \left( (\kappa - \frac{1}{2c}  ) \frac{|y|^2}{t} \right) \dd y \\
& = (2\pi (\tfrac{1}{\frac{1}{c}- 2\kappa}) t)^{-\frac{d}{2}} 
(\tfrac{1}{c(\frac{1}{c}- 2\kappa) })^{\frac{d}{2}} 
\int_{\R^d}  \exp \left( - \frac{|y|^2}{2 (\frac{1}{ \frac{1}{c}- 2\kappa }) t} \right) \dd y 
=(\tfrac{1}{1-2c\kappa  })^{\frac{d}{2}} .
\end{align*}
\end{calc}
Hence, by Theorem~\ref{theorem:heat_kernel_bound} 
  \begin{align*}
    \mathbb{E}_x [F_{T,\kappa}] 
   \cand \begin{calc}
    = \int_0^T \int_0^T \mathbb{E}_x \left[ \exp
    ( \kappa  \left( \frac{| X_{r_2} - X_{r_1} |}{| r_2 - r_1  |^{\frac{1}{2}}} \right)^2 ) \right] \dd r_1 \dd r_2
    \end{calc} \cnewline
    & = \int_0^T \int_0^T \mathbb{E}_x \left[ \int_{\mathbb{R}^d} \Gamma_{|r_2 - r_1|}( y ,  X_{r_1})\exp (  \kappa \left( \frac{| y -     X_{r_1} |}{| r_2 - r_1 |^{\frac{1}{2}}} \right)^2 ) \dd y \right]
    \dd r_1 \dd r_2\\
    & \le C (\tfrac{1}{1-2c\kappa  })^{\frac{d}{2}}  \int_0^T \int_0^T \exp \left( C |r_2 - r_1|  \Big[  \| \Delta_{-1} b \|_{  C_t   L^\infty}^{2}  + \| \Delta_{\ge 0} b \|_{  C_t   B_{\infty, 1}^{- \alpha}}^{\frac{2}{1 - \alpha}}  \Big]  \right) \dd r_1
    \dd r_2 .
  \end{align*}
The proof is completed by observing that for $A\ge 1$
\begin{align*}
\int_0^T \int_0^T \exp \left( A |r_2 - r_1| \right) \dd r_1
    \dd r_2  
= 2  \int_0^T \int_0^{t} e^{A (t - s)} \dd s \dd t 
\lesssim 
\begin{calc} \frac{2}{A} \int_0^{T} e^{A t} \dd t 
\lesssim  \frac{e^{AT}}{A^2}  \le \end{calc}
e^{AT}.
\end{align*}
\end{proof}

\begin{proof}[Proof of Corollary \ref{corollary:escape_out_of_box_probability}]
As $T \ge 1\ge e^{- 1}$ we have $\psi
  (T) = \sqrt{T}$. 
Therefore, by Markov's inequality for all $M,K>0$ and the fact that $\psi$ is strictly increasing:
\begin{align*}
    \mathbb{P}_x \Big(\sup_{t\in[0,T]} | X_t - x | \ge K\Big) 
& \le \mathbb{E}_x \Big[ \exp \Big( \frac{1}{M T}  \sup_{t\in[0,T]} | X_t - x |^2 \Big) \Big] \exp  \Big( - \frac{K^2}{M T} \Big)\\
& \le \mathbb{E}_x \Big[ \exp \Big( \frac{1}{M} \Big(\sup_{ \substack{s,t\in [0,T] \\ s<t }} \frac{| X_t - X_s |}{\psi (t - s)}\Big)^2 \Big) \Big] \exp \Big( - \frac{K^2}{M T} \Big).
\end{align*}
So \eqref{eqn:escape_out_of_box_probability} follows from Corollary~\ref{corollary:expectation_of_exponential_of_sup}.
\end{proof}

\appendix

\section{Appendix}


\begin{theorem}
\label{theorem:product_estimates}
Suppose $\alpha <0$ and $\beta >0$ are such that $\alpha + \beta >0$. 
Let $p,p_1,p_2, q_1,q_2 \in [1,\infty]$ be such that 
\begin{align}
\label{eqn:holder_coefficients}
\tfrac1p = \tfrac{1}{p_1} + \tfrac{1}{p_2} .
\end{align}
For all $r\ge q_1$
\begin{align}
\label{eqn:product_in_B_p_r_no_loss}
\| u \cdot v \|_{B_{p,r}^{\alpha } } 
& \lesssim  \| u \|_{B_{p_1,q_1}^\alpha} \| v \|_{B_{p_2,q_2}^\beta }. 
\end{align}
\end{theorem}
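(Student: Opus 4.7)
The plan is to use Bony's paraproduct decomposition and write
\begin{align*}
u \cdot v = u \para v + u \reso v + u \arap v,
\end{align*}
where $u \para v = \sum_{j \ge -1} S_{j-1} u \cdot \Delta_j v$, $u \arap v = v \para u$, and $u \reso v = \sum_{|i-j| \le 1} \Delta_i u \cdot \Delta_j v$, and then bound each of the three pieces in $B_{p,r}^\alpha$ separately. The spatial H\"older condition \eqref{eqn:holder_coefficients} enters the $L^p$ estimate of each Littlewood--Paley block of the product.

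For $u \para v$, spectral support considerations localise $\Delta_k(u \para v)$ to indices $j$ with $|j - k| \le N$. Because $\alpha<0$, a geometric-series argument gives
\begin{align*}
\|S_{j-1} u\|_{L^{p_1}} \le \sum_{i \le j-2} 2^{-i\alpha}(2^{i\alpha}\|\Delta_i u\|_{L^{p_1}}) \lesssim 2^{-j\alpha}\|u\|_{B_{p_1,q_1}^\alpha},
\end{align*}
and H\"older in space then yields $2^{k\alpha}\|\Delta_k(u \para v)\|_{L^p} \lesssim \|u\|_{B_{p_1,q_1}^\alpha} \sum_{|j-k|\le N}\|\Delta_j v\|_{L^{p_2}}$. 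Since $\beta>0$ the sequence $(\|\Delta_j v\|_{L^{p_2}})_j$ is absolutely summable with $\ell^1$-norm bounded by $\|v\|_{B_{p_2,q_2}^\beta}$, so taking $\ell^r$ in $k$ delivers the bound for any $r\ge 1$. For $u \arap v$ the same spectral localisation applies, and uniform boundedness of $S_{j-1}$ on $L^{p_2}$ combined with $\|v\|_{L^{p_2}}\lesssim \|v\|_{B_{p_2,q_2}^\beta}$ (again using $\beta>0$) gives
\begin{align*}
2^{k\alpha}\|\Delta_k(u \arap v)\|_{L^p} \lesssim \|v\|_{B_{p_2,q_2}^\beta}\sum_{|j-k|\le N} 2^{(k-j)\alpha}(2^{j\alpha}\|\Delta_j u\|_{L^{p_1}}).
\end{align*}
The right-hand side is a finite convolution of an $\ell^{q_1}$-sequence with a compactly supported weight, hence lies in $\ell^{q_1}_k$; the assumption $r\ge q_1$ then supplies the embedding $B_{p,q_1}^\alpha \hookrightarrow B_{p,r}^\alpha$ needed to close the estimate.

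For the resonance $u \reso v$, the spectral support of each $\Delta_i u \cdot \Delta_j v$ with $|i-j|\le 1$ lies in a ball of radius $\sim 2^j$, so only indices $j \gtrsim k$ contribute to $\Delta_k(u \reso v)$. Using the $\ell^\infty$-embedding of $\ell^{q_1}$ and $\ell^{q_2}$ one obtains
\begin{align*}
\|\Delta_k(u \reso v)\|_{L^p} \lesssim \sum_{j \gtrsim k}\|\Delta_j u\|_{L^{p_1}}\|\Delta_j v\|_{L^{p_2}} \lesssim \|u\|_{B_{p_1,q_1}^\alpha}\|v\|_{B_{p_2,q_2}^\beta}\sum_{j \gtrsim k} 2^{-j(\alpha+\beta)},
\end{align*}
and the geometric series converges precisely because $\alpha+\beta > 0$, yielding $\|\Delta_k(u \reso v)\|_{L^p} \lesssim 2^{-k(\alpha+\beta)}$. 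Hence $u \reso v \in B_{p,\infty}^{\alpha+\beta}$, which embeds into $B_{p,r}^\alpha$ for every $r\ge 1$ since $\alpha+\beta > \alpha$.

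The main subtle point is the $u \arap v$ estimate, where the low-regularity information of $u$ is transported essentially intact into the product's high-frequency content, and this is what forces the hypothesis $r \ge q_1$. The paraproduct and the resonance terms both produce bounds with arbitrary summability on the outer $\ell^r$-index, so no further restriction arises from those pieces.
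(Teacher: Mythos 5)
Your proof is correct and follows essentially the same route as the paper: the Bony decomposition $u\cdot v = u\para v + u\reso v + u\arap v$ with the standard three estimates, where $u\para v$ and $u\reso v$ gain regularity (so any outer index $r$ works) and $u\arap v$ keeps the regularity $\alpha$ and the summability $q_1$ of $u$, which is exactly where the hypothesis $r\ge q_1$ enters. The only difference is that the paper delegates these block estimates to the references \cite[Theorems 2.82, 2.79, 2.52]{BaChDa11}, whereas you prove them directly by Littlewood--Paley analysis.
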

\begin{proof}
For the proof see also \cite[Corollary 2.1.35]{Ma18}.
By slightly adapting \cite[Theorem 2.82]{BaChDa11} and by using the H\"older inequality and \cite[Theorem 2.79]{BaChDa11} (for \eqref{eqn:bound_arap_with_L_p}), we obtain implies the following two estimates. 
\begin{align}
\label{eqn:bound_para_plus_reso}
& \| u \para v \|_{B_{p,q}^{\alpha+\beta} } 
 \lesssim 
 \|u\|_{B_{p_1,q_1}^\alpha} \|v\|_{B_{p_2,q_2}^\beta}, \\
\label{eqn:bound_arap_with_L_p}
& \| u \arap v \|_{B_{p,r}^\alpha} 
 \lesssim 
\| v \|_{L^{p_2}} \| u \|_{B_{p_1,r}^\alpha}
\lesssim 
\| v \|_{B_{p_2,q_2}^\beta } \| u \|_{B_{p_1,q_1}^\alpha}. 
\end{align}
As \cite[Theorem 2.52]{BaChDa11} implies $ \| u \reso v \|_{B_{p,q}^{\alpha+\beta} } 
 \lesssim 
 \|u\|_{B_{p_1,q_1}^\alpha} \|v\|_{B_{p_2,q_2}^\beta}$, combining the above inequalities proves \eqref{eqn:product_in_B_p_r_no_loss}. 
\end{proof}

\begin{calc}
\begin{lemma}
\label{lemma:derivative_of_integral_parameter_and_integrand_same}
Let $\fX$ be a Banach space and $f : [0,\infty) \times [0,\infty) \rightarrow \fX$ be continuously differentiable and be such that $s \mapsto f(t-s,s)$ is Bochner integrable on $[0,t]$. 
Define $F(t) := \int_0^t f(t-s,s) \dd s$. 
Then $F$ is differentiable and 
\begin{align*}
\partial_t F(t) = f(0,t) + \int_0^t  \rD_1 f(t-s,s) \dd s. 
\end{align*}
\end{lemma}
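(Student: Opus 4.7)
The plan is to apply a standard Leibniz-type argument, splitting $F(t+h)-F(t)$ into a boundary contribution and an interior contribution. Writing $\int_0^{t+h}=\int_0^t+\int_t^{t+h}$ and rearranging, one gets
\[
F(t+h)-F(t)=\int_t^{t+h} f(t+h-s,s)\dd s+\int_0^t\bigl[f(t+h-s,s)-f(t-s,s)\bigr]\dd s.
\]
Dividing by $h>0$, the first term tends to $f(0,t)$ as $h\downarrow 0$: the change of variables $r=s-t$ turns it into $\frac1h\int_0^h f(h-r,t+r)\dd r$, whose limit is $f(0,t)$ by continuity of $f$ at $(0,t)$.

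For the second term I would use the fundamental-theorem identity
\[
\frac{f(t+h-s,s)-f(t-s,s)}{h}=\frac{1}{h}\int_0^h \rD_1 f(t-s+\sigma,s)\dd \sigma,
\]
which is valid because $f$ is $C^1$. The right-hand side converges pointwise in $s$ to $\rD_1 f(t-s,s)$ as $h\to 0$, and for $|h|\le 1$ it is bounded by
\[
\sup\bigl\{\|\rD_1 f(u,s)\|_{\fX}:u\in[0,t+1],\ s\in[0,t]\bigr\},
\]
which is finite by the continuity of $\rD_1 f$ on the compact set $[0,t+1]\times[0,t]$. Dominated convergence for Bochner integrals (with a constant scalar majorant on the finite-measure interval $[0,t]$) then yields $\tfrac{1}{h}\int_0^t[f(t+h-s,s)-f(t-s,s)]\dd s\to\int_0^t \rD_1 f(t-s,s)\dd s$.

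The only step that needs any care is the interchange of limit and Bochner integral; everything else — the boundary split, the change of variables, and the identification of the pointwise limit as $\rD_1 f$ — is routine. The same argument applied to $h<0$ (with the obvious sign bookkeeping, $\int_{t+h}^t$ replacing $\int_t^{t+h}$) yields the two-sided derivative, establishing that $F$ is differentiable with the claimed value $\partial_t F(t)=f(0,t)+\int_0^t \rD_1 f(t-s,s)\dd s$.
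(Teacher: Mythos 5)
Your proof is correct, and it rests on the same two ingredients as the paper's argument: continuity of $f$ for the boundary contribution and dominated convergence for Bochner integrals for the term producing $\int_0^t \rD_1 f(t-s,s)\dd s$. The only cosmetic difference is that the paper packages the computation through the auxiliary function $G(t,u)=\int_0^t f(u-s,s)\dd s$, the fundamental theorem of calculus and the chain rule, whereas you unfold the same decomposition directly in the difference quotient (just note that for $h<0$ the interior term should run over $[0,t+h]$ so that the first argument of $f$ stays nonnegative).
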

\begin{proof}
$F(t) = G(t,t)$, where $G(t,u) =\int_0^t f(u-s,s) \dd s$. By the Lebesgue dominated convergence theorem for Bochner integrals, $\partial_u G(t,u) =  \int_0^t  \rD_1 f(u-s,s) \dd s$. The rest follows by the Fundamental law of Calculus and the chain rule (one may also want to use that Bochner integrable functions are Pettis integrable and their integrals agree). 
\end{proof}
\end{calc}

\textbf{Acknowledgements.} 
This work was supported by the German Science Foundation (DFG) via the Forschergruppe FOR2402 ``Rough paths, stochastic partial differential equations and related topics''. WvZ was supported by the DFG through SPP1590 ``Probabilistic Structures in Evolution''. 
NP thanks the DFG for financial support through the Heisenberg program.
The main part of the work was done while NP was employed at Humboldt-Universit\"at zu Berlin and Max-Planck-Institute for Mathematics in the Sciences, Leipzig. The
authors are also grateful to the anonymous referees for their valuable feedback, suggestions and careful
reading.

\bibliographystyle{abbrv}
\bibliography{references}

\end{document}